\documentclass[7pt,twoside]{article}
\usepackage[dvips]{graphicx}
\usepackage{fancyhdr}
\usepackage{color}
\usepackage{amsmath}
\usepackage{amssymb}
\usepackage{tikz}
\usepackage{bm}
\usepackage{url}
\usepackage{latexsym}
\usepackage{arydshln}

\usepackage[utf8]{inputenc}

\usepackage{xcolor}
 \numberwithin{equation}{section}

\def\xxl{\color{red}}
\def\mitra{\color{olive}}
\definecolor{darkviolet}{rgb}{0.58,0,0.83} 
\def\xxlnew{\color{darkviolet}}

\unitlength1mm

\newcommand{\clsp}[1]{
{ \overline{span}_{i\in I}\left\{ #1 \right\} }
}

\newcommand{\Hil}[0]{
\mathcal{H}
}
\newcommand{\norm}[2]{
\left\| #2 \right\|_{#1}
}

\newcommand{\BL}[1]{
{\mathcal B} \left( #1 \right)
}

\newtheorem{theorem}{Theorem}[section]
\newtheorem{definition}{Definition}[section]
\newtheorem{proposition}[theorem]{Proposition}
\newtheorem{lemma}[theorem]{Lemma}
\newtheorem{corollary}[theorem]{Corollary}

\newtheorem{ex}[theorem]{Example}
\newtheorem{conj.}[theorem]{Conjecture}
\newtheorem{Bsp.}{Example}[section]

\newenvironment{proof}{\noindent \bf Proof: \rm}{$ \hspace{\stretch{1}} \Box $

\vspace{1mm}}

\def\wisi{\widetilde{\psi}}

\def\wiSi{\widetilde{\Psi}}

\newcommand{\range}[1]{\mathsf{ran}\left( #1 \right)} 
\newcommand{\domain}[1]{\mathsf{dom}\left( #1 \right)} 
\newcommand{\kernel}[1]{\mathsf{ker}\left( #1 \right)} 

\begin{document}
%
%
\title{\bf\vspace{-5
pt} 
 An unbounded operator theory approach to lower frame and Riesz-Fischer sequences
}
%
%
\author{Peter Balazs, Mitra Shamsabadi}

%
%
\date{}

%
%
\maketitle
\begin{abstract}
Frames and orthonormal bases are naturally linked to bounded operators. 
To tackle unbounded operators those sequences might not be well suited. 
This has already been noted by von Neumann in the 1920ies. 
But modern frame theory also investigates other sequences, including those that are not naturally linked to bounded operators. The focus of this manuscript will be two such kind of sequences: lower frame and Riesz-Fischer sequences. We will discuss the inter-relation of those sequences.
We will fill a hole existing in the literature regarding the classification of those sequences by their synthesis operator.
We will use the idea of generalized frame operator and Gram matrix and extend it. 
We will use that to show properties for canonical duals for lower frame sequences, like e.g. a minimality condition regarding its coefficients. We will also show that other results that are known for frames can be generalized to lower frame sequences. 

To be able to tackle these tasks, we had to revisit the concept of invertibility (in particular for non-closed operators).  In addition, we are able to define a particular adjoint, which is uniquely defined for any operator.

%
\end{abstract}

%

\section{Introduction}

Frames are sequences in Hilbert spaces that allow redundant, i.e. non-unique  representations \cite{Casaz1,ole1n,duffschaef1}. 
 Over the last years, they were established as strong tool for working with bounded operators \cite{xxlframoper1,xxlcharshaare18,mitraG}. Associated to those sequences, are canonical {\em frame-related operators}, like the analysis, synthesis, frame or Gram operators. 
As the frame condition involves an upper and lower bound conditions, those operators are bounded, and stably invertible. 
If the upper frame condition, also called Bessel condition is not assumed to hold, those operators cannot be considered bounded anymore. 
 A major breakthrough in this
field was reached by Casazza et.al in 2002 \cite{casoleli1} by showing that the
 properties of lower frame sequences are related to those of the associated analysis operators, which are in this case unbounded. 
 The study \cite{xxlstoeant11} extends this approach and defines those operators for any sequence as potentially unbounded operators: In a survey approach, existing results were collected and extended with new ones for the classification of sequences by the behaviour of their frame-related operators, \cite[Section 4.1.]{xxlstoeant11}.

Still for this classification, some questions remain open
, they need a deeper insertion into the theory of unbounded operators and will be covered in this manuscript. For example,  even if $\Psi$ is an arbitrary sequence without any further assumption, the analysis operator $C_{\Psi}$ is always closed and this leads to nice classification results. 
This is not the case for the synthesis operator. Therefore, as $D_{\Psi}$ is not closed, 
 \cite[Proposition 4.2. (f)]{xxlstoeant11} had to employ properties of different, albeit related operators. 
 This observation raises the question on how to
{\em classify Riesz-Fischer sequences and lower frame sequences by properties of the synthesis operators alone, in particular not including their inverses and adjoints.}

To tackle this problem, one has to deal with not necessarily closed operators, and also discuss different notions of invertibility (which coincide for closed operators). 

Defining the adjoint (and inversion of) operators, is a significant part of unbounded operator theory. 
Traditionally, this was done for densely defined operators and closable operators, respectively. 
Here we extend the basic definitions of the adjoint and the inverse of  operators of an arbitrary unbounded operator and create some links between them and some kinds of the invertibility concepts.
Those new concepts will be used to  classify all lower frame sequences and Riesz-Fischer sequences without any further assumption by using the synthesis operators. 
Moreover, this provides us with a link to another question, namely how to prove the converse of \cite[Proposition 3.2]{casoleli1}.

This also allows us also to extend the theory of generalized frame-related operators. Those were introduced by R. Corso \cite{Corso2019,Corso2019GeneralizedFO} as a advancement of the above-mentioned approach \cite{xxlstoeant11}.
By this we can solve one of the most important topics about frames and unbounded  operators, still  open and underdeveloped: how to classify   
 minimal lower frame sequences respectively complete Riesz-Fischer sequences. 
 We will show that it is possible to derive a biorthogonal sequence for every minimal lower frame sequence. 
 
\subsection{Outlines}
 We start our observations by presenting a short review of definitions and basic concepts from literature in Section \ref{sec:prel0}.
 In Section 3, we will discuss an important distinction of invertibility concepts needed for this paper. 
Moreover, we connect the invertibility of a densely defined operator to its closure and adjoint operators.
In Section 4,
we present restricted frame-related operators for arbitrary sequences.
We characterize lower frame sequences and Riesz-Fischer bases using their synthesis operators  in Section 5. 
In Section 6, we review some basic definitions and results of positive and symmetric sesquilinear forms and link them to sequences.  
In passing, we will show that the coefficients using the canonical dual fulfill a minimality condition. 

\section{Preliminaries} \label{sec:prel0}
Throughout this paper,  $\Hil_1$ and $\Hil_2$ are Hilbert spaces, $I$  a countable index set and 
$\pi_{V}$ denotes the orthogonal projection from a separable Hilbert space $\mathcal{H}$
onto a closed subspace $V$.
The sequence $\{\delta_i\}_{i\in I}$ denotes the canonical basis of $\ell^2$ and $\{e_i\}_{i\in I}$ an orthonormal basis for $\Hil$. 
For a given operator $T$, 
we denote its domain, range and the null space  by $\domain{T}$, $\range{T}$ and $\kernel{T}$, respectively.


\subsection{Unbounded Operators}


Remember that a linear  operator $T:\domain{T}\subseteq \Hil_1 \rightarrow \Hil_2$ is called \emph{densely defined} if $\overline{\domain{T}}=\Hil_1.$  Moreover, an operator $S:\domain{S}\subseteq \Hil_1 \rightarrow \Hil_2$ is called an \emph{extension} of $T$ (or $T$ is a \emph{restriction} of $S$) if $\domain{T}\subseteq \domain{S}$ and $Tf=Sf$, for all $f\in \domain{T}$, in symbol 
$T\subseteq S$ or $ S\supseteq T.$ We denote the Hilbert space $\overline{\domain{T}}$ (with induced topology of $\Hil$) by $\Hil_{T}$. This means that  $T:\Hil_{T}\subseteq \Hil_1 \to \Hil_2$ is always densely defined.

\begin{definition}  \label{def:adjoint0} \cite{ka95-1}
Let $T:\domain{T}\subseteq \Hil_1 \rightarrow \Hil_2$ be a 
linear operator, then let (formally)
$$\domain{T^*} = \left\{ k \in \Hil_2 : h \mapsto \left< T h , k \right> \mbox{\small is a bounded functional for all } h \in\domain{T} \right\} .$$
We call a linear operator $S : \Hil_2 \rightarrow \Hil_1$ an adjoint of $T$ if 
\begin{equation} \label{eq:ajdoint} \left< T h , k \right> = \left< h , S k \right> \text{ for all } h \in \domain{T}, k \in \domain{S}.
\end{equation}
Clearly, $\domain{S} \subseteq \domain{T^*}$. 
If $T$ is densely defined, then $T^*$ denote its (unique) adjoint. 
\end{definition}
There can be many adjoints \cite{ka95-1} except if the operator is densely defined. 
The adjoint operator is well-defined and unique, whenever  $T$ is densely defined, by the Riesz representation theorem.  For two densely defined operators $T$ and $U$, if $U \subseteq T$, then $ T^{\ast} \subseteq U^{\ast}$.
 Also, by \cite[Problem III.5.26 ]{ka95-1}, if $S:\domain{S}\subseteq \Hil_3\to \Hil_1$ and $TS$ densely defined, then  $S^{\ast}T^{\ast}\subseteq (TS)^{\ast}$. 

Later, in Section \ref{sec:adjoint1}, we will discuss a way to define a particular adjoint for operators that are not densely defined.



\begin{definition}\cite{ka95-1}
The graph of $T$ is the set 
$$\mathbb{G}(T):=\left\{(h,Th)\in \Hil_1\oplus \Hil_2, h\in \domain{T}\right\}.$$
An operator $T$ is called \emph{closed} if it has a \emph{closed graph}. It is called \emph{closable}, if it has a closed extension. This means that the closure of its graph is again a graph. 
 We denote the closure of $T$ by $\overline{T}$. Moreover,  
\begin{eqnarray}\label{Tbastform} 
    \domain{\overline{T}}=\left\{ x\in \Hil_1:\exists \{x_n\}\subset \domain{T},~x_n\rightarrow x, 
   \{Tx_n\} \text{ Cauchy sequence} \right\}.
\end{eqnarray}
In this case,  $$\overline{T}x=\lim_{n\rightarrow \infty} Tx_n.$$ 
\end{definition}

In particular, of course \begin{eqnarray}\label{dom:=clo}\domain{T}\subseteq \domain{\overline{T}}\subseteq  \overline{\domain{T}}\end{eqnarray}
and so, if $\overline{T}$ is densely defined then $T$ is densely defined.
In particular, for closable operators, $\Hil_{T} = \Hil_{\overline{T}} $.

 Also from \eqref{Tbastform} it follows that \begin{eqnarray}\label{ran=clo}\range{T}\subseteq \range{\overline{T}}\subseteq \overline{\range{T}}.
 \end{eqnarray}
Hence, if $\overline{T}$ has  dense range if and only if $T$ has dense range.
\begin{definition}\cite[Definition 13.7]{rudinengl}
The operator $T$ is called \emph{symmetric}, 
\begin{eqnarray}\label{def:sym}
\langle Tf,g\rangle=\langle f,  Tg\rangle, \qquad (f, g\in \domain{T})
\end{eqnarray}
A densely defined symmetric operator $T$ satisfies 
$T\subseteq T^*$. 
Moreover, $T$ is called self-adjoint if, in addition, $T^*=T$. 
A symmetric operator $T$ is called \emph{essentially self-adjoint} if $T$ is densely defined and its closure $T^{**}$ is self-adjoint. Also, $T$ is called positive (resp. non-negative) 
if $\left\langle Tf,f\right\rangle > 0$ (resp. $\left\langle Tf,f\right>\geq0$)  for all $f \in \domain{T}$.  A positive operator on a complex Hilbert space is symmetric and has a self-adjoint extension which is a positive operator.
\end{definition}
 Note that for bounded operators a positive operator is self-adjoint. For unbounded operator usually, see e.g. \cite{ka95-1}, the positivity is defined including a symmetric 
assumption. As we will go beyond the densely defined setting (see Section \ref{sec:adjoint1}), we haven chosen this definition.


We also refer to the following results frequently.
\begin{proposition}\label{4propertdens}
Let $T$ be a densely defined operator. The following assertions hold.
\begin{enumerate}
   \item \cite[Proposition X.1.6]{conw1}   $T^*$ is closed.
    \item \cite[Proposition X.1.6]{conw1}  $T$ is closable if and only if $T^*$ is densely defined.
    \item   \cite[Theorem VIII.1]{rs1}  If $T$ is closable, then its closure is $T^{**}$ and $(
    \overline{T})^*=T^*$.
    \item  \cite[Proposition X.1.13]{conw1}  $(\range{T})^{\perp}=\kernel{T^*}$. Moreover, if $T$ is closable then $(\range{T^*})^{\perp}=\kernel{\overline{T}}$.

\item \cite[Theorem III.5.13]{ka95-1} If $T$ is closed, then 
$ran(T)$ is closed if and only if  $ran(T^*)$ is closed.
\item \label{TT*self}\cite[Theorem V.3.24]{ka95-1} If $T$ is closed, then  $T^*T$ is a closed, densely defined and self-adjoint operator. 
\end{enumerate}
\end{proposition}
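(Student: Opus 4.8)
Since each of the six items is a standard fact about densely defined operators, the natural plan is to derive (1)--(4) from a single device---the graph-complement description of the adjoint---and to treat (5) and (6) as the two genuinely separate theorems they are. In the paper one would simply cite the indicated sources; the sketch below records the route I would take for a self-contained argument.

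First I would introduce the unitary flip $J:\Hil_1\oplus\Hil_2\to\Hil_2\oplus\Hil_1$, $J(h,k)=(k,-h)$, and verify by a one-line computation that $\mathbb{G}(T^*)=\bigl(J\,\mathbb{G}(T)\bigr)^{\perp}$ inside $\Hil_2\oplus\Hil_1$: indeed $(k,l)\perp J(h,Th)$ for every $h\in\domain{T}$ says exactly $\langle Th,k\rangle=\langle h,l\rangle$, which is the defining relation $l=T^*k$. Because an orthogonal complement is always closed, this identity gives (1) immediately.

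From the same identity I would read off (3) and (2). For (3), closability yields $\mathbb{G}(\overline{T})=\overline{\mathbb{G}(T)}$; since $J$ is unitary it commutes with closure and a set has the same complement as its closure, so $\mathbb{G}((\overline{T})^*)=(J\overline{\mathbb{G}(T)})^{\perp}=(J\mathbb{G}(T))^{\perp}=\mathbb{G}(T^*)$, i.e.\ $(\overline{T})^*=T^*$; applying the complement identity a second time with the companion flip $J'(k,h)=(h,-k)$, for which $J'J=-\identity{\Hil_1\oplus\Hil_2}$, gives $\mathbb{G}(T^{**})=(\mathbb{G}(T)^{\perp})^{\perp}=\overline{\mathbb{G}(T)}$, so $T^{**}=\overline{T}$. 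For (2), a direct check shows $\mathbb{G}(T)^{\perp}=\{(T^*k,-k):k\in\domain{T^*}\}$, whence a vertical vector $(0,k)$ lies in $\overline{\mathbb{G}(T)}=\mathbb{G}(T)^{\perp\perp}$ if and only if $k\perp\domain{T^*}$; thus $T$ is closable exactly when $\domain{T^*}^{\perp}=\{0\}$, i.e.\ $T^*$ is densely defined. Item (4) is then the observation that $k\in\kernel{T^*}$ iff $\langle Th,k\rangle=0$ for all $h$, i.e.\ $k\perp\range{T}$; the second assertion follows by applying the first to $\overline{T}$ and using $(\overline{T})^*=T^*$ together with $\kernel{\overline{T}}=(\range{(\overline{T})^*})^{\perp}$.

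The last two items stand apart. For (6) I would use von Neumann's decomposition: as $T$ is closed, $\mathbb{G}(T)$ is closed and $\Hil_1\oplus\Hil_2=\mathbb{G}(T)\oplus\mathbb{G}(T)^{\perp}$ with $\mathbb{G}(T)^{\perp}=\{(T^*k,-k):k\in\domain{T^*}\}$; decomposing a vector $(f,0)$ produces $h\in\domain{T}$ with $Th\in\domain{T^*}$ and $(I+T^*T)h=f$, showing $I+T^*T$ maps onto $\Hil_1$, and from the boundedness, symmetry and positivity of its (everywhere-defined) inverse one concludes that $I+T^*T$, hence $T^*T$, is densely defined and self-adjoint. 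Item (5), the closed-range theorem for closed unbounded operators, is the step I expect to be the main obstacle: it cannot be extracted from the graph-complement bookkeeping alone and instead needs a quantitative argument---reducing ``$\range{T}$ closed'' to a lower bound $\|Tf\|\ge c\,\mathrm{dist}(f,\kernel{T})$ and transferring this bound to $T^*$ through the decomposition above (equivalently, an open-mapping/Banach closed-range argument applied to the bounded operator canonically induced by the closed operator $T$).
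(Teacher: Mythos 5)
Your proposal is correct, and it reconstructs exactly the standard graph-complement/von Neumann arguments used in the references the paper cites for each item (Conway, Reed--Simon, Kato) --- the paper itself gives no proof beyond those citations, so your route coincides with ``the paper's own proof.'' The only nitpick: for the second half of item (4) you should apply the identity $(\range{S})^{\perp}=\kernel{S^*}$ to $S=T^*$ (using $(T^*)^*=\overline{T}$ from item (3)), not to $\overline{T}$, but this is a phrasing slip rather than a gap.
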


In this paper, we also need the definition of the pseudo-inverse of unbounded operators \cite[Lemma 1.1]{fre14}.
\begin{proposition}\label{pseudogeneral}
Let $T:\domain{T}\subseteq \Hil_1\to \Hil_2$ be a densely defined, closed operator. There exists an  unique operator $T^{\dag}$ from $\Hil_2$ to $\Hil_1$ such that 
\begin{eqnarray*}
\overline{\range{T^{\dag}}}=\kernel{T}^{\perp}, \qquad \kernel{T^{\dag}}=\range{T}^{\perp},\qquad T T^{\dag} T = T.
\end{eqnarray*}
\end{proposition}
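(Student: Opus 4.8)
The plan is to construct $T^{\dag}$ explicitly as the algebraic inverse of a suitable injective restriction of $T$, to verify the three identities directly, and finally to show that these identities force the action of any candidate operator, which yields uniqueness.

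\emph{Construction.} Since $T$ is closed, $\kernel{T}$ is a closed subspace, so $\Hil_1=\kernel{T}\oplus\kernel{T}^{\perp}$. I would let $T_0$ be the restriction of $T$ to $\domain{T}\cap\kernel{T}^{\perp}$. Then $T_0$ is injective, because its kernel is $\kernel{T}\cap\kernel{T}^{\perp}=\{0\}$, and, decomposing each $x\in\domain{T}$ along the orthogonal sum, one sees $\range{T_0}=\range{T}$. Hence $T_0$ is a bijection of $\domain{T}\cap\kernel{T}^{\perp}$ onto $\range{T}$ with a well-defined algebraic inverse $T_0^{-1}$. I would then define $T^{\dag}$ on $\domain{T^{\dag}}:=\range{T}\oplus\range{T}^{\perp}$ by $T^{\dag}(y_1+y_2):=T_0^{-1}y_1$ for $y_1\in\range{T}$ and $y_2\in\range{T}^{\perp}=\overline{\range{T}}^{\perp}$; this is single-valued since the two summands are orthogonal.

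\emph{Verification.} The identity $\kernel{T^{\dag}}=\range{T}^{\perp}$ is immediate from the injectivity of $T_0^{-1}$. For $TT^{\dag}T=T$, I would take $x\in\domain{T}$, write $x=x_1+x_2$ with $x_1\in\kernel{T}$ and $x_2\in\domain{T}\cap\kernel{T}^{\perp}$; then $Tx=T_0x_2\in\range{T}\subseteq\domain{T^{\dag}}$, so $T^{\dag}Tx=x_2\in\domain{T}$ and $TT^{\dag}Tx=Tx_2=Tx$. For the range identity I would first establish the splitting $\domain{T}=\kernel{T}\oplus(\domain{T}\cap\kernel{T}^{\perp})$, which follows because $\kernel{T}\subseteq\domain{T}$ and the orthogonal projection of any $x\in\domain{T}$ onto $\kernel{T}^{\perp}$ again lies in $\domain{T}$. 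Then, using density of $\domain{T}$ in $\Hil_1$ and continuity of the projection $\pi_{\kernel{T}^{\perp}}$, I would conclude that $\range{T^{\dag}}=\domain{T}\cap\kernel{T}^{\perp}$ is dense in $\kernel{T}^{\perp}$, i.e. $\overline{\range{T^{\dag}}}=\kernel{T}^{\perp}$.

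\emph{Uniqueness.} Suppose $S$ satisfies the same three identities. From $\overline{\range{S}}=\kernel{T}^{\perp}$ one has $\range{S}\subseteq\kernel{T}^{\perp}$. For $x\in\domain{T}$, the identity $TSTx=Tx$ gives $STx-x\in\kernel{T}$; since $STx\in\kernel{T}^{\perp}$, the splitting $x=x_1+x_2$ forces $STx=x_2=\pi_{\kernel{T}^{\perp}}x=T^{\dag}Tx$, so $S$ and $T^{\dag}$ agree on $\range{T}$, and they agree on $\range{T}^{\perp}$ since both vanish there by the kernel identity. Thus $S=T^{\dag}$ on the natural domain $\range{T}\oplus\range{T}^{\perp}$ of the pseudo-inverse. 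The step I expect to be the main obstacle is the density claim: because $\range{T}$ need not be closed, $T^{\dag}$ is genuinely unbounded and only densely (not everywhere) defined, so one must argue carefully that $\domain{T}\cap\kernel{T}^{\perp}$ is dense in $\kernel{T}^{\perp}$. This is exactly where closedness of $T$ (giving a closed $\kernel{T}$ and hence the orthogonal splitting of $\domain{T}$) and the density of $\domain{T}$ are both needed.
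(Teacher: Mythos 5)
Your construction is correct. Note first that the paper offers no internal proof of this proposition at all: it is imported verbatim from the literature as \cite[Lemma 1.1]{fre14}, so your argument is a self-contained substitute rather than an alternative to a proof in the paper. What you give is the standard Moore--Penrose construction: the splitting $\domain{T}=\kernel{T}\oplus\left(\domain{T}\cap\kernel{T}^{\perp}\right)$ is valid because $\kernel{T}$ is closed (this is where closedness of $T$ enters) and $\kernel{T}\subseteq\domain{T}$; the restriction $T_0$ is then a bijection onto $\range{T}$; the verification of $\kernel{T^{\dag}}=\range{T}^{\perp}$ and $TT^{\dag}T=T$ is routine and correct; and your density argument for $\overline{\range{T^{\dag}}}=\kernel{T}^{\perp}$ (approximate $z\in\kernel{T}^{\perp}$ by a sequence in $\domain{T}$ and apply the continuous projection $\pi_{\kernel{T}^{\perp}}$, which maps $\domain{T}$ into $\domain{T}\cap\kernel{T}^{\perp}$) is sound.

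The one point to make fully explicit is the uniqueness claim, and your closing caveat is exactly the right instinct. The three identities alone do \emph{not} determine an operator unless the domain is fixed to be $\range{T}\oplus\range{T}^{\perp}$. Indeed, if $T$ is unbounded and $\range{T}$ is not closed, choose $y_0\in\overline{\range{T}}\setminus\range{T}$ and $v\in\kernel{T}^{\perp}\setminus\domain{T}$ (such $v$ exists: otherwise $\kernel{T}^{\perp}\subseteq\domain{T}$, hence $\domain{T}\supseteq\kernel{T}\oplus\kernel{T}^{\perp}=\Hil_1$ and $T$ would be bounded by the closed graph theorem). Extending $T^{\dag}$ by $y_0\mapsto v$ preserves all three identities: the product $TT^{\dag}T$ only sees values on $\range{T}$; the closure of the range is unchanged since $v\in\kernel{T}^{\perp}$; and the kernel does not grow, because $\range{T^{\dag}}\subseteq\domain{T}$ while $v\notin\domain{T}$, so no nontrivial combination $T_0^{-1}y_1+\lambda v$ can vanish. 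Thus the proposition's phrase ``unique operator from $\Hil_2$ to $\Hil_1$'' must be read as ``unique operator with domain $\range{T}\oplus\range{T}^{\perp}$'', which is precisely what your final paragraph establishes. This is a looseness in the statement as quoted by the paper, not a gap in your proof.
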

The operator $T^{\dag}$ is called the  {\em pseudo-inverse operator} of $T$. It is bounded if and only if $\range{\overline{T}}$ is closed, \cite[Corollary 1.2]{fre14}. 

We can extend it to any closable $T$. If we restrict the operator to its closed domain, it is densely defined. The above result can be applied for $\overline{T}$. So we can formulate 
 \begin{corollary}\label{pseudogeneral}
Let $T:\domain{T}\subseteq \Hil_1\to \Hil_2$ be a closable operator. There exists an operator $\overline{T}^{\dag}$ from $\Hil_2$ to $\Hil_1$ such that 
\begin{eqnarray*}
\overline{\range{\overline{T}^{\dag}}}=\kernel{\overline{T}}^{\perp}\cap \Hil_{T}, \qquad \kernel{\overline{T}^{\dag}}=\range{ T}^{\perp},\qquad T\overline{T}^{\dag}T=T.
\end{eqnarray*}
\end{corollary}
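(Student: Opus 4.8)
The plan is to reduce the statement to the closed, densely defined setting of Proposition~\ref{pseudogeneral} by regarding $\overline{T}$ as an operator acting on its own closed domain. First I would record that, since $T$ is closable, $\Hil_{T}=\Hil_{\overline{T}}$, so that $\overline{T}:\domain{\overline{T}}\subseteq \Hil_{T}\to \Hil_2$ is densely defined, and it is closed by the definition of the closure. Hence Proposition~\ref{pseudogeneral}, applied with the ambient domain space $\Hil_{T}$ playing the role of $\Hil_1$, produces a unique operator $\overline{T}^{\dag}:\Hil_2\to \Hil_{T}\subseteq \Hil_1$ with $\overline{\range{\overline{T}^{\dag}}}=\kernel{\overline{T}}^{\perp}$, $\kernel{\overline{T}^{\dag}}=\range{\overline{T}}^{\perp}$ and $\overline{T}\,\overline{T}^{\dag}\,\overline{T}=\overline{T}$, where the first complement is computed inside $\Hil_{T}$. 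This is the operator I would take as $\overline{T}^{\dag}$.

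Next I would translate these three relations into $\Hil_1$ and into statements about $T$. For the range identity the only point is that the complement in Proposition~\ref{pseudogeneral} is taken inside $\Hil_{T}$; since $\kernel{\overline{T}}\subseteq \Hil_{T}$ and $\Hil_{T}$ is closed in $\Hil_1$, one has $\kernel{\overline{T}}^{\perp_{\Hil_{T}}}=\kernel{\overline{T}}^{\perp}\cap \Hil_{T}$ with the complement now in $\Hil_1$, which is exactly the asserted $\overline{\range{\overline{T}^{\dag}}}=\kernel{\overline{T}}^{\perp}\cap \Hil_{T}$. For the kernel identity I would invoke \eqref{ran=clo}, which gives $\overline{\range{T}}=\overline{\range{\overline{T}}}$; since the orthogonal complement of a set equals that of its closure, $\range{\overline{T}}^{\perp}=\overline{\range{T}}^{\perp}=\range{T}^{\perp}$, yielding $\kernel{\overline{T}^{\dag}}=\range{T}^{\perp}$.

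The relation $T\,\overline{T}^{\dag}\,T=T$ is where the genuine work lies, and I expect it to be the main obstacle, because $\overline{T}^{\dag}$ takes values in $\domain{\overline{T}}$ rather than in $\domain{T}$. For $f\in\domain{T}$ we have $Tf=\overline{T}f$, whence $\overline{T}\,\overline{T}^{\dag}\,Tf=\overline{T}\,\overline{T}^{\dag}\,\overline{T}f=\overline{T}f=Tf$; the task is to replace the outer $\overline{T}$ by $T$. This is legitimate exactly for those $f$ with $\overline{T}^{\dag}Tf\in\domain{T}$, where $T$ and $\overline{T}$ agree, and there the composition returns $Tf$; this already establishes $T\,\overline{T}^{\dag}\,T\subseteq T$. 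For the reverse inclusion of domains I would note that $\overline{T}^{\dag}Tf=(I-P)f$, where $P$ is the orthogonal projection of $\Hil_{T}$ onto the closed subspace $\kernel{\overline{T}}$, and then check that this vector lies back in $\domain{T}$, i.e.\ that $Pf\in\domain{T}$. The delicate case is when $\kernel{\overline{T}}$ is not contained in $\domain{T}$, and here I would exploit the explicit description \eqref{Tbastform} of $\domain{\overline{T}}$ to control the defect; this domain bookkeeping is the step that demands the most care and is what separates the closable case from the already-settled closed case.
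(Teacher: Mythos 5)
Your first three steps coincide with the paper's own proof, which consists of nothing more than the reduction you describe: restrict to $\Hil_T$, note that $\overline{T}:\domain{\overline{T}}\subseteq\Hil_T\to\Hil_2$ is closed and densely defined, apply the pseudo-inverse result for closed, densely defined operators (\cite[Lemma 1.1]{fre14}), and translate the two set identities exactly as you do (the relative complement $\kernel{\overline{T}}^{\perp}\cap\Hil_T$, and $\range{\overline{T}}^{\perp}=\range{T}^{\perp}$ from \eqref{ran=clo}). The gap is in your fourth step, and it is a genuine one: the ``domain bookkeeping'' you defer to cannot be carried out, because the statement you would need --- that $\overline{T}^{\dag}Tf\in\domain{T}$ for every $f\in\domain{T}$ --- is false in general. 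Concretely, take $\Hil_1=\Hil_2=\ell^2$, let $V=\overline{\mathrm{span}}\{e_n: n\geq 2\}$, and let $T=\pi_V|_{D}$ where $D=\mathrm{span}\{e_1+e_n: n\geq 2\}$ (finite linear combinations). Since $e_1+\frac{1}{N}\sum_{n=2}^{N+1}e_n\to e_1$, the subspace $D$ is dense in $\ell^2$, so $\Hil_T=\ell^2$ and, $\pi_V$ being bounded, $\overline{T}=\pi_V$ with $\kernel{\overline{T}}=\mathrm{span}\{e_1\}$ and $\overline{T}^{\dag}=\pi_V$. For $f=e_1+e_2\in\domain{T}$ one gets $\overline{T}^{\dag}Tf=\pi_V e_2=e_2$, and $e_2\notin D$: any element of $D$ equal to $e_2$ would need coefficients $c_n=0$ for $n\geq 3$ and $c_2=1$, forcing the $e_1$-coordinate $\sum_n c_n=1\neq 0$. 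Hence $f\notin\domain{T\overline{T}^{\dag}T}$, and $T\overline{T}^{\dag}T$ is a proper restriction of $T$.

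Moreover, the defect cannot be repaired by a cleverer choice of operator: in this example, operator equality $TGT=T$ forces $Ge_n=e_1+e_n$ for all $n\geq 2$ (the conditions $\pi_V(Ge_n)=e_n$ and $Ge_n\in D$ leave no other option), whence $\overline{\range{G}}\supseteq\overline{D}=\ell^2$, contradicting the required $\overline{\range{G}}=\kernel{\overline{T}}^{\perp}\cap\Hil_T=V$. So the third identity of the corollary can only hold in the weaker sense that the reduction actually yields: $T\overline{T}^{\dag}T\subseteq T$, equivalently $\overline{T}\,\overline{T}^{\dag}\,Tf=Tf$ for all $f\in\domain{T}$ --- which is also how the paper, whose one-line proof never addresses this point, must be read. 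Your instinct that this is the step separating the closable case from the closed one is exactly right, and your inclusion $T\overline{T}^{\dag}T\subseteq T$ is proved correctly; but the claimed equality of domains should be abandoned (or the outer $T$ replaced by $\overline{T}$) rather than pursued.
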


Again we call $\overline{T}^{\dag}$ the pseudo-inverse of $T$ (which is also the pseudo-inverse of $\overline{T}$).

\subsection{Frames}
A sequence $\Psi=\{\psi_i\}_{i\in I}$ in a Hilbert space $\Hil$ is called a \textit{frame} \cite{ole1n,duffschaef1}  if there are positive constants $A\leq B<\infty$ (called lower and upper frame bound, respectively) such that 
\begin{eqnarray}\label{defnfram}
A\|f\|^2\leq \sum_{i\in I}\left|\left< f,\psi_i\right>\right|^2\leq B\|f\|^2, \qquad (f\in \Hil).
\end{eqnarray}
If $\Psi$ satisfies  the right hand side of \eqref{defnfram}, then $\Psi$ is called a \textit{Bessel sequence}. If it satisfies the left hand side, it is called a \emph{lower frame sequence} \cite{casoleli1} (or lower semi-frame \cite{antbal12}).
Let us denote the \textit{synthesis operator} (of a Bessel sequence $\Psi$) by  $D_{\Psi}:\ell^2\rightarrow \Hil$, defined by $D_{\Psi}\{c_i\}_{i\in I}=\sum_{i\in I}c_i\psi_i$ and its adjoint, the \textit{analysis operator} by  $C_{\Psi}f=\left\{\left<f,\psi_i\right>\right\}_{i\in I}$. The frame operator $S_{\Psi}f=D_{\Psi}C_{\Psi}f=\sum_{i\in I}\left< f, \psi_i\right>\psi_i$ is positive, self-adjoint and invertible, whenever $\Psi$ is a frame \cite[Lemma 5.5.1]{ole1n}. Moreover, using a frame, the elements of a Hilbert space $\Hil$ can be reconstructed, i.e. there is a frame ${\Psi}^d=\{\psi_{i}^d\}_{i\in I}$ (called \textit{dual frame}) such that
$$f = \sum_{i\in I}\left< f,{\psi_i}^d\right>\psi_i,  =  \sum_{i\in I}\left< f,{\psi_i}\right>\psi^d_i, \qquad \forall f\in \Hil .$$
For every frame $\Psi$ there always exists  a dual frame  $\wiSi=S_{\Psi}^{-1}\Psi$, called the \textit{canonical dual}. 

A sequence $\Psi$ is a \textit{Riesz basis} in $\Hil$ if it is complete (i.e. for every non-zero element  $f\in \Hil$, $\sum_{i\in I}\left|\left<f,\psi_i\right>\right|^2>0$)
 and there exist positive  constants $A'\leq B'<\infty$ such that for every finite scalar sequences $c=\{c_i\}_{i\in I}$ one has
 $$ A'\|c\|^2\leq \left\| D_{\Psi}c\right\|^2\leq B'\|c\|^2.$$
 It is called a \emph{Riesz-Fischer sequence} if the left hand side is fulfilled (the right hand side is just the Bessel condition).
 Moreover,  a sequence $\Psi$ 
 \begin{enumerate}
     \item 
is \textit{minimal} if for all $j\in I$, $\psi_j\notin \overline{\textrm{span}}\{\psi_i\}_{i\neq j}$.
 \item is \textit{$\omega$-independent} if $\sum_{i\in I}c_i\psi_i=0$ implies that $c_i=0$, for all $i\in I$.
 \item has a \textit{biorthogonal sequence}, if there is a sequence $\Phi=\{\phi_i\}_{i\in I}$ such that $\left<\psi_i,\phi_j\right>=\delta_{ij}$, for all $i,j\in I$. 
  \end{enumerate}

	\subsubsection{Arbitrary sequences and related operators}
Naturally, other classes of sequences exist, where the frame condition is not satisfied. Still the frame-related operators $D_{\Psi}$, $C_{\Psi}$ and $S_{\Psi}$ can be defined, but are, in general, unbounded: 

\begin{definition}\cite{xxlstoeant11}
Let $\Psi$ be a sequence in $\Hil$. Then
\begin{enumerate}
    \item the \emph{synthesis operator} 
		$D_{\Psi}:\domain{D_{\Psi}}\subseteq \ell^2\rightarrow \Hil$ is defined by 
	 $D_{\Psi}\{c_i\}_{i\in I}=\sum_{i\in I}c_i\psi_i$, where  $\domain{D_{\Psi}}=\left\{\{c_i\}_{i\in I}\in \ell^2: \sum_{i\in I}c_i\psi_i \text{ converges in } \Hil\right\}$. 
      \item the \emph{analysis operator} $C_{\Psi}:\domain{C_{\Psi}}\subseteq \Hil\rightarrow \ell^2$ is defined by $C_{\Psi}f=\left\{\left<f,\psi_i\right>\right\}_{i\in I}$, where $\domain{C_{\Psi}}=\left\{f\in \Hil: \left\{\left<f,\psi_i\right>\right\}_{i\in I}\in \ell^2\right\}$.
      \item the \emph{frame operator} $S_{\Psi}:\domain{S_{\Psi}}\subseteq \Hil\rightarrow \Hil$ is defined by $S_{\Psi}f=\sum_{i\in I}\left< f, \psi_i\right>\psi_i$, where $\domain{S_{\Psi}}=\left\{f\in \Hil:\sum_{i\in I}\left< f, \psi_i\right>\psi_i \text{ converges in}\   \Hil \right\}$.
\end{enumerate}
\end{definition}
 The following preliminary result will be applied throughout the paper frequently:
\begin{proposition}\cite{xxlstoeant11}\label{xxlstoeant11}
Let $\Psi\subseteq \Hil$ be  an arbitrary sequence. The following assertions hold.
\begin{enumerate}
    \item $C_{\Psi}=D_{\Psi}^*$. 
    \item $D_{\Psi}$ is closed or equivalently $C_{\Psi}$ is closed.
    \item $C_{\Psi}$ is densely defined if and only if $D_{\Psi}$ is closable. In this case, $D_{\Psi} \subseteq C_{\Psi}^*$.
     \item $D_{\Psi}$ is closed if and only if $C_{\Psi}$ is densely defined and $D_{\Psi}=C_{\Psi}^*$.
    \item If $C_{\Psi}$ is densely defined, then $S_{\Psi}$ is closable.
\end{enumerate}
\end{proposition}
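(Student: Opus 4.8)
The whole proposition rests on (1), which I would prove first. Observe that $D_{\Psi}$ is densely defined: every finitely supported $c$ lies in $\domain{D_{\Psi}}$, and such $c$ are dense in $\ell^2$, so $D_{\Psi}^*$ exists and is unique. To identify it, fix $k\in\Hil$ and evaluate the functional $c\mapsto\langle D_{\Psi}c,k\rangle$ on finitely supported $c$, where it equals $\sum_i c_i\langle\psi_i,k\rangle$. Testing against the $c$ supported on a finite set $F$ with $c_i=\langle k,\psi_i\rangle$ turns this into $\sum_{i\in F}|\langle\psi_i,k\rangle|^2$, which forces $\{\langle k,\psi_i\rangle\}_i\in\ell^2$ as soon as the functional is $\ell^2$-bounded, i.e. $k\in\domain{C_{\Psi}}$. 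Conversely, for such $k$ the Cauchy--Schwarz inequality yields $\langle D_{\Psi}c,k\rangle=\langle c,C_{\Psi}k\rangle_{\ell^2}$ for \emph{all} $c\in\domain{D_{\Psi}}$ (both series converging), so the functional is bounded with representative $C_{\Psi}k$. Hence $\domain{D_{\Psi}^*}=\domain{C_{\Psi}}$ and $D_{\Psi}^*k=C_{\Psi}k$, proving (1). The only delicate point is this last upgrade from finitely supported $c$ to the full domain $\domain{D_{\Psi}}$.

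With (1) in hand, the remaining parts are bookkeeping with the closedness/closability toolbox of Proposition \ref{4propertdens}. For (2), since $C_{\Psi}=D_{\Psi}^*$ is the adjoint of the densely defined operator $D_{\Psi}$, it is closed by Proposition \ref{4propertdens}(1); equivalently, the closed operator here is $D_{\Psi}^*$. A self-contained coordinatewise argument also works: if $f_n\to f$ in $\Hil$ and $C_{\Psi}f_n\to d$ in $\ell^2$, then for each $i$ one has $\langle f_n,\psi_i\rangle\to\langle f,\psi_i\rangle$ and $\langle f_n,\psi_i\rangle\to d_i$, forcing $\{\langle f,\psi_i\rangle\}_i=d\in\ell^2$, so $f\in\domain{C_{\Psi}}$ and $C_{\Psi}f=d$. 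I would stress here that $D_{\Psi}$ itself is \emph{not} closed in general (already the constant sequence $\psi_i=e_1$ admits $c^{(n)}\to 0$ with $D_{\Psi}c^{(n)}=e_1$), consistent with the Introduction; so (2) is the closedness of $D_{\Psi}^*=C_{\Psi}$, which is exactly why the later classification must be routed through $C_{\Psi}$.

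For (3) I would apply Proposition \ref{4propertdens}(2) to $D_{\Psi}$: it is closable iff $D_{\Psi}^*=C_{\Psi}$ is densely defined. When this holds, $C_{\Psi}^*$ exists, and since $D_{\Psi}$ is then closable, Proposition \ref{4propertdens}(3) gives $\overline{D_{\Psi}}=D_{\Psi}^{**}=(D_{\Psi}^*)^*=C_{\Psi}^*$, whence $D_{\Psi}\subseteq C_{\Psi}^*$. Part (4) is the sharpened version: for $(\Leftarrow)$, if $C_{\Psi}$ is densely defined and $D_{\Psi}=C_{\Psi}^*$ then $D_{\Psi}$ is an adjoint, hence closed; for $(\Rightarrow)$, if $D_{\Psi}$ is closed it is closable, so by (3) $C_{\Psi}$ is densely defined and $D_{\Psi}=\overline{D_{\Psi}}=C_{\Psi}^*$.

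Finally, for (5) I would \emph{not} try to show that $S_{\Psi}$ is densely defined (it need not be); instead I would exhibit a closed extension. Since $C_{\Psi}$ is closed by (2) and densely defined by hypothesis, Proposition \ref{4propertdens}(6) makes $C_{\Psi}^*C_{\Psi}$ closed, densely defined and self-adjoint. Using $D_{\Psi}\subseteq C_{\Psi}^*$ from (3), composition on the right preserves the inclusion, so $S_{\Psi}=D_{\Psi}C_{\Psi}\subseteq C_{\Psi}^*C_{\Psi}$; thus $S_{\Psi}$ has a closed extension and is therefore closable. In my view the main obstacle is the adjoint identification in (1), together with the conceptual point that $D_{\Psi}$ is generically non-closed, forcing every closedness statement through $C_{\Psi}=D_{\Psi}^*$; step (5) is the one most easily botched, by attempting a symmetry/dense-domain argument for $S_{\Psi}$ rather than the clean extension argument.
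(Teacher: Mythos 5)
Your proof is correct, but note that the paper itself gives no proof of this proposition: it is quoted as a preliminary, with a citation, from the reference in its header, so there is no in-paper argument to compare against. Your route is the standard one and is fully consistent with the machinery the paper develops later: the closed extension $C_\Psi^*C_\Psi$ of $S_\Psi$ that you build in part (5) is exactly what the paper calls the generalized frame operator $\Gamma_\Psi$ in Section 6, where the inclusion $S_\Psi\subseteq\Gamma_\Psi$ is asserted. Two remarks. First, your reading of item (2) is the right one: taken literally as an equivalence ``$D_\Psi$ closed $\Leftrightarrow$ $C_\Psi$ closed'' the item would be false, as your example $\psi_i=e_1$ for all $i$ shows; the intended content---as the paper's own introduction confirms---is that $C_\Psi=D_\Psi^*$ is always closed, which is what you prove. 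Second, the only step you leave implicit is the identity $S_\Psi=D_\Psi C_\Psi$ used in (5): with the paper's definition, $\domain{S_\Psi}$ consists of all $f$ for which $\sum_{i}\langle f,\psi_i\rangle\psi_i$ converges, which does not a priori require $\left\{\langle f,\psi_i\rangle\right\}_{i}\in\ell^2$, so $\domain{S_\Psi}$ could conceivably be larger than $\domain{D_\Psi C_\Psi}$. This is repaired in one line: if the series converges to $g$, then pairing it with $f$ gives $\sum_{i}\left|\langle f,\psi_i\rangle\right|^2=\langle g,f\rangle<\infty$, since the partial sums are nonnegative and convergent; hence $\domain{S_\Psi}\subseteq\domain{C_\Psi}$ (a fact the paper also states without proof in Section 2), and then $\domain{S_\Psi}=\domain{D_\Psi C_\Psi}$ follows. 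With that line added, your argument for (5), and therefore the whole proposal, is complete.
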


By using the potentially unbounded operators $C_\Psi$ and $D_\Psi$ we have the following 
definition.

\begin{definition}
Let $\Psi=\{\psi_i\}_{i\in I}\subseteq \Hil$. We say that
\begin{enumerate}
    \item $\Psi$ is a \emph{lower frame sequence}, if  there exists a constant $A>0$ such that \begin{eqnarray}\label{lowprop}
        \sqrt{A} \cdot \|f\|\leq \| C_\Psi f\| , \qquad (f\in \domain{C_{\Psi}}).
    \end{eqnarray}
     \item $\Psi$ is an  \emph{exact} sequence if for every $k\in I$ we have that $$\overline{\textrm{span}}_{i\in I}\{\psi_i\}\neq\overline{\textrm{span}}_{i\in I , i\neq k}\{\psi_i\}.$$

    \item $\Psi$ is a \emph{Riesz-Fischer  sequence}, if  there exists a constant $A'>0$ such that $\sqrt{A'} \cdot \| c \| \le \| D_\Psi c \|$ for all  ﬁnite scalar sequences $c=\{c_i\}_{i\in I}\in \domain{D_{\Psi}}$. 
\end{enumerate}
\end{definition}





Note that the sequence $\left\{\left< f , \psi_k \right>\right\}_{k\in I}$ is always defined pointwise, therefore $\| C_\Psi f \|$ is either finite or infinite - i.e. $f \in \domain{C_\Psi}$ or $\norm{}{C_\Psi f} = \infty$
- so we could give the definition of a lower frame sequence by \eqref{lowprop} for all $f\in \Hil$ and so any lower frame sequence is complete. In particular, a lower frame sequence is exact if it stops to be a lower frame sequence when an arbitrary element is removed.

The following connection of minimal and exact sequences is obvious:
\begin{corollary}\label{ex=min} Every minimal sequence is exact, and vice-versa.
\end{corollary}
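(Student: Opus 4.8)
The plan is to unwind both definitions in terms of the full closed span and the reduced closed spans, and to observe that the two conditions are essentially contrapositives of one another once closedness is exploited. Write $V := \overline{\textrm{span}}_{i\in I}\{\psi_i\}$ for the closed span of the whole sequence and $V_k := \overline{\textrm{span}}_{i\in I,\, i\neq k}\{\psi_i\}$ for the closed span obtained after deleting the $k$-th vector. Since removing a generator can only shrink the span, $V_k\subseteq V$ holds for every $k\in I$, so exactness at the index $k$ amounts to the strict inclusion $V_k\subsetneq V$.

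For the implication ``minimal $\Rightarrow$ exact'', I would fix an arbitrary $k\in I$. The vector $\psi_k$ lies in $V$ trivially, being one of the generators, whereas minimality asserts precisely that $\psi_k\notin V_k$. Hence $\psi_k\in V\setminus V_k$, which gives $V_k\neq V$; as $k$ was arbitrary, $\Psi$ is exact.

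For the converse I would argue by contraposition, showing that failure of minimality forces failure of exactness. Suppose $\psi_j\in V_j$ for some $j\in I$. By definition every generator $\psi_i$ with $i\neq j$ already belongs to $V_j$, and the assumption supplies $\psi_j\in V_j$ as well; thus $V_j$ is a closed subspace containing all the generators of $V$. Consequently $V=\overline{\textrm{span}}_{i\in I}\{\psi_i\}\subseteq V_j$, and together with the inclusion $V_j\subseteq V$ this yields $V_j=V$, i.e. $\Psi$ is not exact.

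Combining the two implications gives the asserted equivalence. There is no genuine obstacle here; the single point that must be handled with a little care is the passage from ``$V_j$ contains every generator'' to ``$V_j\supseteq V$'', which is exactly where the closedness built into the $\overline{\textrm{span}}$ notation is used.
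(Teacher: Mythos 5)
Your proof is correct: the paper states this corollary without any proof, calling the connection ``obvious,'' and your argument is exactly the intended definitional unwinding. Both directions are sound, including the one subtle point you flag—that the closedness of $V_j$ is what lets you pass from ``$V_j$ contains every generator'' to $V\subseteq V_j$ in the contrapositive direction.
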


The reader should keep in mind that - in general - $S_{\Psi}$ is not densely defined and closable. But if $C_{\Psi}$ is densely defined, then $S_{\Psi}$ is closable (\cite[Proposition 3.3 (vii)]{xxlstoeant11}). Moreover,  by applying the fact that $\domain{S_{\Psi}}\subseteq \domain{C_{\Psi}}$, it can be seen that if $S_{\Psi}$ is densely defined, then $C_{\Psi}$ is densely defined.
So in particular, if $S_\Psi$ is densely defined, it is closable.



\section{New Results and Concepts for Unbounded Operators}
 Here we collect new general concepts and their properties in the theory of unbounded operators, motivated but not directly part of frame theory.

 \subsection{Adjoints for Unbounded Operators with Arbitrary Domains} \label{sec:adjoint1}

As seen in Definition \ref{def:adjoint0}, the (maximal) domain of the adjoint can always be given. It is known, that for operators that are not densely defined, there are several adjoints. Here, we will pick a particular one: 


\begin{definition}
Let $T:\domain{T}\subseteq \Hil_1 \to \Hil_2$ be an unbounded operator. Then the operator ${T}^{\times} := (T|_{\Hil_T})^*$ is called the {\em canonical adjoint} of $T$.
\end{definition}
This definition is justified by the following result:

\begin{theorem}
Let $T:\domain{T}\subseteq \Hil_1 \to \Hil_2$ be an unbounded operator. The operator ${T}^{\times}$ has maximal domain - $\domain{T^*}$ - and is closed. 

For all adjoints $S$ of $T$ with maximal domain, i.e. $\domain{S} = \domain{T^*}$, we have that $S = T^\times + U$, 
where $U: \domain{T^{\times}} \rightarrow \Hil_T^\bot$ is any operator. In particular, $\norm{}{T^\times x} \le \norm{}{S x}$ for all $x \in \domain{T^{\times}}$.
\end{theorem}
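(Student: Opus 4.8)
The plan is to establish the three assertions in turn, in each case exploiting that $T|_{\Hil_T}$ is densely defined in $\Hil_T$, so that its ordinary (unique) adjoint is available, together with Proposition \ref{4propertdens}(1) and the Riesz representation theorem underlying Definition \ref{def:adjoint0}.

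First I would settle the domain and closedness claims. Since $\domain{T|_{\Hil_T}} = \domain{T}$ and $T|_{\Hil_T} h = Th$ for every $h \in \domain{T}$, boundedness of the functional $h \mapsto \langle Th, k\rangle$ is insensitive to whether the domain is regarded as sitting in $\Hil_1$ or in $\Hil_T$. Hence the maximal domain of Definition \ref{def:adjoint0} coincides with $\domain{(T|_{\Hil_T})^*}$, giving $\domain{T^{\times}} = \domain{T^*}$ at once. Closedness is then inherited from Proposition \ref{4propertdens}(1), which guarantees that the adjoint of a densely defined operator is closed; the only thing to verify is that closedness of $(T|_{\Hil_T})^*$ as an operator into the closed subspace $\Hil_T$ transfers to closedness as an operator into $\Hil_1$, which holds because $\Hil_T \oplus \Hil_2$ is closed in $\Hil_1 \oplus \Hil_2$ and the graph of $T^{\times}$ lives inside it.

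Next I would check that $T^{\times}$ is itself an adjoint of $T$ in the sense of \eqref{eq:ajdoint}: for $h \in \domain{T}$ and $k \in \domain{T^{\times}}$ one has $\langle Th, k\rangle = \langle T|_{\Hil_T} h, k\rangle = \langle h, T^{\times} k\rangle$, and crucially $T^{\times} k \in \Hil_T$, since the adjoint of the $\Hil_T$-operator $T|_{\Hil_T}$ takes values in $\Hil_T$. For the characterization, let $S$ be any adjoint of $T$ with $\domain{S} = \domain{T^*} = \domain{T^{\times}}$, and set $U := S - T^{\times}$. Subtracting the two adjoint relations gives $\langle h, (S - T^{\times})k\rangle = 0$ for all $h \in \domain{T}$ and all $k \in \domain{T^{\times}}$; since $\domain{T}$ is dense in $\Hil_T$, this forces $Uk \in \Hil_T^{\perp}$, so $U$ maps $\domain{T^{\times}}$ into $\Hil_T^{\perp}$, as claimed. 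Conversely, for any such $U$ the operator $S := T^{\times} + U$ satisfies the adjoint relation, because $\langle h, Uk\rangle = 0$ whenever $h \in \domain{T} \subseteq \Hil_T$ and $Uk \in \Hil_T^{\perp}$, so the added term contributes nothing.

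Finally, the norm inequality is a direct consequence of the orthogonal splitting just obtained: for $x \in \domain{T^{\times}}$ we have $Sx = T^{\times} x + Ux$ with $T^{\times} x \in \Hil_T$ and $Ux \in \Hil_T^{\perp}$, so the Pythagorean identity yields $\norm{}{Sx}^2 = \norm{}{T^{\times} x}^2 + \norm{}{Ux}^2 \ge \norm{}{T^{\times} x}^2$, whence $\norm{}{T^{\times} x} \le \norm{}{Sx}$. I expect the only genuinely delicate point to be the bookkeeping around the codomain $\Hil_T$ versus $\Hil_1$: one must make sure that $T^{\times}$ really does take values in $\Hil_T$ (so that the Pythagorean splitting is legitimate) and that closedness survives the change of codomain. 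Everything else follows cleanly from the density of $\domain{T}$ in $\Hil_T$ and the uniqueness of the adjoint of a densely defined operator.
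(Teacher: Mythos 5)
Your proof is correct and follows essentially the same route as the paper's: restrict $T$ to $\Hil_T$, use the unique adjoint of this densely defined restriction to get closedness and the maximal domain, and characterize all maximal-domain adjoints via the orthogonality $\langle h,(S-T^{\times})k\rangle=0$ for $h\in\domain{T}$. You actually fill in several details the paper leaves implicit (that $U$ maps into $\Hil_T^{\perp}$ by density of $\domain{T}$ in $\Hil_T$, the transfer of closedness across the change of codomain, and the Pythagorean argument for the norm inequality), so no gaps remain.
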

\begin{proof} Clearly, 
${T_{|_{\Hil_T}}}:\domain{T}\subseteq \Hil_T \to \Hil_2$ is a densely defined operator, ${T}^{\times} := (T|_{\Hil_T})^*$ is therefore closed, and certainly an adjoint with maximal domain, i.e it fulfills \eqref{eq:ajdoint} on all of $\domain{T^*}$. 

Let $S_1$ and $S_2$ be two adjoints of $T$ with maximal domain, by \eqref{eq:ajdoint} we have that 
$$ \left< h, (S_1 - S_2) k \right> = 0, \forall h \in \domain{T}, k \in \domain{T^*}. $$

Therefore, $S = T^\times + U$ is an adjoint of $T$, as
$$ \left< h, S k \right> = \left< h, T^\times k \right> + \left< h, U k \right> = \left< h, T^\times k \right>. $$

On the other hand, let $S$ be an adjoint with maximal domain, then define $U = S-T^\times$. \end{proof}


With the same approach we can define $T^{\times \times} := \left( \left(T^{\times}\right)_{|_{\Hil_{T^{\times}}}}\right)^*$. 
Clearly, $T$ is a closable (respectively closed) operator  if and only if $T|_{\Hil_T}$ is, as $\mathbb{G}(T) = \mathbb{G}( (T|_{\Hil_T}))$  and $\Hil_T \times \Hil_2$ is a closed subspace of $\Hil_1 \times \Hil_2$.
Therefore, in this case,  $T^{\times \times}=\overline{T|_{\Hil_T}}  = \overline{T}$   on their domains- but still being considered as operators acting between different spaces.


Actually, if $T$ is densely defined operator then $T^{\times}$ is exactly $T^*$, if it is also closable then $T^{\times \times} = T^{* *} = \overline{T}$.

\begin{lemma}\label{cloT}
Assume that $T:\domain{T}\subseteq \Hil_1\to \Hil_2$ is an unbounded operator. Then $T^{\times}$ has closed range if and only if $T^{\times \times}$ has closed range.
In particular, if $T$ is a closable densely defined operator, then $\overline{T}$ has closed range if and only if $T^*$ has closed range. 
\end{lemma}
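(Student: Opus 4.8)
The plan is to reduce the claim to the symmetric closed-range statement in Proposition \ref{4propertdens}\,(5), which asserts that a \emph{closed} operator has closed range if and only if its adjoint does. The obstruction to applying that result directly is that $T^\times$ need not be densely defined, so its adjoint in the classical sense is not immediately available; the canonical-adjoint construction resolves exactly this by restricting to the closed domain before passing to the adjoint.

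First I would record that $T^\times = (T|_{\Hil_T})^*$ is closed, being the adjoint of the densely defined operator $T|_{\Hil_T}$ (Proposition \ref{4propertdens}\,(1)). Next I would restrict $T^\times$ to its own closed domain $\Hil_{T^\times} = \overline{\domain{T^\times}}$. Since $\domain{T^\times} \subseteq \Hil_{T^\times}$ already, the graph of $(T^\times)|_{\Hil_{T^\times}}$ coincides with that of $T^\times$ (exactly as in the remark preceding this lemma, where $\Hil_T \times \Hil_2$ being closed forces the graphs to agree), so $(T^\times)|_{\Hil_{T^\times}}$ is again closed, and by construction it is densely defined in $\Hil_{T^\times}$. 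Moreover $\range{(T^\times)|_{\Hil_{T^\times}}} = \range{T^\times}$, because restricting to the closed domain changes neither the domain nor the action, and $((T^\times)|_{\Hil_{T^\times}})^* = T^{\times\times}$ by the definition of the second canonical adjoint.

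Now $(T^\times)|_{\Hil_{T^\times}}$ is a closed, densely defined operator, so Proposition \ref{4propertdens}\,(5) applies and gives that $\range{(T^\times)|_{\Hil_{T^\times}}}$ is closed if and only if $\range{((T^\times)|_{\Hil_{T^\times}})^*}$ is closed. Translating through the two identifications of the previous step, this reads: $\range{T^\times}$ is closed if and only if $\range{T^{\times\times}}$ is closed, which is the main assertion. For the final sentence I would invoke the identities noted just above the lemma: if $T$ is densely defined then $T^\times = T^*$, and if $T$ is in addition closable then $T^{\times\times} = T^{**} = \overline{T}$; substituting these into the equivalence just proved yields that $\overline{T}$ has closed range if and only if $T^*$ has closed range.

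The only genuinely delicate point is the second step — verifying that passing from $T^\times$ to $(T^\times)|_{\Hil_{T^\times}}$ preserves closedness while gaining dense definedness, and that this restriction alters neither the range nor the adjoint in the relevant sense. Once this bookkeeping is in place, the result is an immediate application of the closed-operator duality in Proposition \ref{4propertdens}\,(5).
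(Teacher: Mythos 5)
Your proposal is correct and follows essentially the same route as the paper: the paper's proof likewise views $T^{\times}$ as a closed, densely defined operator $\domain{T^{\times}}\subseteq \Hil_{T^{\times}}\to \Hil_{T}$, identifies its adjoint as $T^{\times\times}$, and applies the closed-range duality of Proposition \ref{4propertdens} (which the paper cites as item (6), apparently a typo for item (5), the statement you correctly invoke). Your explicit bookkeeping — that restricting to $\Hil_{T^{\times}}$ changes neither the graph nor the range and that $((T^{\times})|_{\Hil_{T^{\times}}})^{*}=T^{\times\times}$ by definition — is exactly what the paper's terser argument leaves implicit.
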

\begin{proof}
As we mention above, for every unbounded operator $T$, the operator $T^{\times}:\domain{T^{\times}}\subseteq \Hil_{T^{\times}}\to \Hil_{T}$ is closed and densely defined operator and by Proposition \ref{4propertdens} (6), $T^{\times}$ has closed range if and only if $T^{\times \times}$ has.

\end{proof}


So now, we can define a well-defined and unique adjoint, even if $T$ is not uniquely defined. So, for two operators $T$ and $U$, if $U \subseteq T$, then $ T^{\times} \subseteq U^{\times}$. We can also state - like for densely defined operators - that $U^{\times}T^{\times}\subseteq (TU)^{\times}$. We can also revisit symmetric operators: 
\begin{definition}
The operator $T$ is called \emph{symmetric}, if 
\begin{eqnarray}\label{def:sym}
\langle Tf,g\rangle=\langle f,  Tg\rangle, \qquad (f, g\in \domain{T}).
\end{eqnarray}
Moreover, $T$ is called 
\emph{relaxed self-adjoint} if, in addition, $T^{\times}=T$. 
A symmetric operator $T$ is called \emph{relaxed essentially self-adjoint} if  its closure $T^{\times\times}$ is  relaxed self-adjoint.
\end{definition}
 It is worthwhile to mention that an operator $T$ is symmetric if and only if 
$T\subseteq T^{\times}$.  Indeed,   $T$ is a symmetric operator if and only if for all $f,g\in \domain{T}$,
\begin{eqnarray*}\label{*}
\left<Tf,g\right>=\left< f,Tg\right>=\left< f, T|_{\Hil_T}g\right>
\end{eqnarray*}
if and only if  $T\subseteq \left(T_{\Hil_T}\right)^*=T^{\times}$.

\subsection{On the Invertibility of Unbounded Operators}
Taking into account that we do {\em not} consider only closed operators, let us mention that one has to be careful about the concept of invertibility. There are two different concepts in the literature. Does it mean to be invertible everywhere (so  bijective), or just on its range (thereby being injective)?
Even for bounded operators it is not always clear, which definition is used. For unbounded non-closed operators it is even more important to be precise here. 

In this paper we will look e.g. at the lower frame sequence case, which means that we have a lower bound for the analysis operator. It is therefore invertible, but  this does not imply that the operator has to be bijective. For
 our purposes it is important to distinguish the following concepts, which are not always well separated in the literature:
\begin{definition}
An operator $T:\domain {T}\subseteq \Hil_1\rightarrow \Hil_2$ is called
\begin{enumerate}
    \item 
 \emph{boundedly invertible} (we denote this by (BI))
 , if there is a bounded operator $S:\Hil_2 \rightarrow \Hil_1$ such that $TSg=g$, for all $g\in \Hil_2$ and $STf=f$, for all $f\in \domain{T}$.

\item   \emph{bounded from below} (bb) 
if there is $m>0$ such that for all $f\in \domain{T}$
$$m\|f\|\leq \|Tf\| .$$

\item   \emph{boundedly invertible  onto its range 
} 
(denote this by (BIR)), if  $T$ has closed range and $T:\domain{T} \subseteq \Hil_1 \rightarrow \range{T}$ is (BI). 
\end{enumerate}
\end{definition}
As a result if an operator is (bb) then clearly there is an inverse $T^{-1} : \range{T} \rightarrow \domain{T}$ which is bounded. 

In the literature these notions are not clearly distinguished, see e.g. \cite{gohbgol1} where 'invertible' is defined to be (BI) or e.g. \cite{ka95-1} where it means (bb).



Using this new notation let us note that

\begin{proposition}\label{sec:boundinvertunb1}
\cite[Proposition 1.15.]{conw1}  
The operator 
$T:\domain {T}\subseteq \Hil_1\rightarrow \Hil_2$
 is (BI) if and only if $T$ is closed and 
bijective. 
\end{proposition}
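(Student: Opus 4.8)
The plan is to prove both implications directly, letting the Closed Graph Theorem carry the essential analytic content in one direction while the rest reduces to elementary manipulation of inverses and graphs.

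First I would establish the forward implication. Assuming $T$ is (BI), I take the bounded operator $S:\Hil_2\to\Hil_1$ from the definition, satisfying $TSg=g$ for all $g\in\Hil_2$ and $STf=f$ for all $f\in\domain{T}$. Surjectivity of $T$ is then immediate, since every $g=TSg$ lies in $\range{T}$; injectivity follows because $Tf=0$ forces $f=STf=0$. Hence $T$ is a bijection of $\domain{T}$ onto $\Hil_2$ with $T^{-1}=S$. For closedness I would argue via graphs: since $S$ is bounded and everywhere defined on $\Hil_2$, its graph $\mathbb{G}(S)\subseteq\Hil_2\oplus\Hil_1$ is closed. Because $\mathbb{G}(T)$ is the image of $\mathbb{G}(S)$ under the coordinate-flip homeomorphism $(y,x)\mapsto(x,y)$ from $\Hil_2\oplus\Hil_1$ onto $\Hil_1\oplus\Hil_2$, it is closed as well, so $T$ is closed.

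For the converse, I assume $T$ is closed and bijective. Bijectivity yields a well-defined linear inverse $T^{-1}:\Hil_2\to\domain{T}\subseteq\Hil_1$, and surjectivity of $T$ ensures its domain is all of $\Hil_2$. The same flip argument shows $T^{-1}$ is closed, since $\mathbb{G}(T^{-1})$ is the coordinate swap of the closed set $\mathbb{G}(T)$. The key step is now to invoke the Closed Graph Theorem: $T^{-1}$ is a closed operator whose domain is the entire Hilbert space $\Hil_2$, hence it is bounded. Setting $S:=T^{-1}$, the identities $TS=\identity{\Hil_2}$ and $ST=\identity{\domain{T}}$ hold by construction, so $T$ is (BI).

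The only point requiring care — rather than a genuine obstacle — is bookkeeping: one must read ``bijective'' as bijectivity of $T$ \emph{from its domain $\domain{T}$ onto all of $\Hil_2$}, so that $T^{-1}$ is genuinely everywhere defined and the Closed Graph Theorem becomes applicable. Closedness is precisely the hypothesis that licenses that single application; everything else is routine.
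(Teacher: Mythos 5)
Your proof is correct and complete. Note that the paper itself gives no proof of this proposition at all: it is quoted verbatim from the literature (Conway, \cite[Proposition 1.15]{conw1}), so your argument supplies exactly the content the paper delegates to its reference, and it is the standard argument: for the forward direction, injectivity and surjectivity fall out of the two identities $STf=f$ and $TSg=g$, while closedness of $T$ follows because $\mathbb{G}(T)$ is the coordinate flip of the closed graph of the bounded, everywhere-defined operator $S=T^{-1}$; for the converse, the flip argument shows $T^{-1}$ is closed and everywhere defined on $\Hil_2$, so the Closed Graph Theorem gives boundedness. Two small remarks. First, your graph-flip step is precisely the content of the paper's own Theorem \ref{TT-1closed} ($T$ injective implies $T$ closed iff $T^{-1}$ closed), so within the paper's logic you could cite that result instead of redoing the flip, though your self-contained version is equally valid and there is no circularity since that theorem is proved independently. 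Second, your closing ``bookkeeping'' caveat is indeed the right one: bijectivity must mean from $\domain{T}$ onto all of $\Hil_2$, and you also implicitly (and correctly) use that the definition of (BI) forces $\range{S}\subseteq\domain{T}$, which is what makes $g=TSg\in\range{T}$ legitimate; it would not hurt to say that explicitly.
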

This, in parts, motivated the introduced concept of (BIR), in the sense that we have: 
\begin{corollary}\label{sec:boundinvertunb2}
The operator 
$T:\domain {T}\subseteq \Hil_1\rightarrow \Hil_2$
 is (BIR) if and only if $T$ is injective, closed and 
 has closed range.
\end{corollary}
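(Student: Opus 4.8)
The plan is to reduce the statement directly to Proposition \ref{sec:boundinvertunb1} by regarding $T$ as an operator whose codomain is its range rather than all of $\Hil_2$. The definition of (BIR) already packages the closed-range condition together with the requirement that $T:\domain{T}\subseteq \Hil_1 \to \range{T}$ be (BI), so once the range is known to be closed it is itself a Hilbert space and Proposition \ref{sec:boundinvertunb1} applies verbatim to this restricted-codomain operator. That proposition characterises (BI) as ``closed and bijective'', and the entire corollary will follow by translating those two words back into statements about $T$ viewed as a map into $\Hil_2$.

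First I would dispose of the bijectivity clause, which is immediate: every operator is surjective onto its own range, so ``$T:\domain{T}\to\range{T}$ is bijective'' is equivalent to ``$T$ is injective''. Thus the surjectivity half of Proposition \ref{sec:boundinvertunb1} contributes nothing beyond injectivity, and the only clause that needs genuine unwinding is closedness.

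The key step --- and the only place requiring real care --- is to verify that closedness of $T$ is unaffected by shrinking the codomain from $\Hil_2$ down to $\range{T}$. Here I would use that, under the standing hypothesis, $\range{T}$ is a closed subspace of $\Hil_2$, so $\Hil_1 \oplus \range{T}$ is a closed subspace of $\Hil_1 \oplus \Hil_2$. Since the graph $\mathbb{G}(T)$ is contained in $\Hil_1 \oplus \range{T}$, the elementary fact that a subset of a closed subspace is closed in that subspace if and only if it is closed in the ambient space shows that $\mathbb{G}(T)$ is closed in $\Hil_1 \oplus \range{T}$ exactly when it is closed in $\Hil_1 \oplus \Hil_2$. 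Equivalently, $T$ regarded as a map into $\range{T}$ is closed if and only if $T$ regarded as a map into $\Hil_2$ is closed. I expect this codomain-restriction argument to be the main obstacle, mild as it is, since it is precisely where the closed-range hypothesis is genuinely used.

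With these two observations in hand the equivalence assembles in both directions. For the forward implication, (BIR) supplies closed range by definition and, via Proposition \ref{sec:boundinvertunb1} applied to $T:\domain{T}\to\range{T}$, supplies closedness (into $\range{T}$, hence into $\Hil_2$) together with bijectivity (hence injectivity). For the converse, assume $T$ is injective, closed, and has closed range; the closed-range condition is exactly the first half of (BIR), injectivity together with automatic surjectivity onto $\range{T}$ gives bijectivity, and closedness into $\Hil_2$ upgrades to closedness into $\range{T}$ by the subspace argument. Proposition \ref{sec:boundinvertunb1} then yields that $T:\domain{T}\to\range{T}$ is (BI), which is the remaining half of (BIR), completing the proof.
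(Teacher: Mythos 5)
Your proof is correct and follows exactly the route the paper intends: the corollary is stated as an immediate consequence of Proposition \ref{sec:boundinvertunb1}, obtained by viewing $T$ as an operator into the Hilbert space $\range{T}$, with bijectivity onto the range reducing to injectivity. Your explicit verification that closedness of the graph is preserved under restricting the codomain (since $\Hil_1\oplus\range{T}$ is closed in $\Hil_1\oplus\Hil_2$ when $\range{T}$ is closed) is precisely the detail the paper leaves implicit.
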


We will also use 
\begin{proposition} \label{prop:injcloran1}
\cite[Proposition 2.14]{hand20}  Let $T$ be a closed operator. Then $T$ is (bb) if and only if $T$ is injective and has closed range. 
\end{proposition}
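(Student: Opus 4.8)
The plan is to prove the two implications separately, using the lower bound directly in the forward direction and the closed graph theorem for the converse.

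For the forward direction, I would assume $T$ is (bb), so there is $m > 0$ with $m\|f\| \le \|Tf\|$ for all $f \in \domain{T}$. Injectivity is then immediate, since $Tf = 0$ forces $m\|f\| \le 0$, hence $f = 0$. To show $\range{T}$ is closed, I would take a sequence $y_n = Tx_n$ with $y_n \to y$ in $\Hil_2$; because a convergent sequence is Cauchy and $m\|x_n - x_k\| \le \|Tx_n - Tx_k\| = \|y_n - y_k\|$, the sequence $(x_n)$ is Cauchy in $\Hil_1$ and so converges to some $x$. Then $x_n \to x$ and $Tx_n \to y$, and the closedness of $T$ yields $x \in \domain{T}$ with $Tx = y$, so $y \in \range{T}$, which establishes that the range is closed.

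For the converse, I would assume $T$ is injective with closed range. Then $T^{-1}:\range{T} \to \Hil_1$ is a well-defined linear operator whose domain is all of $\range{T}$. Its graph is the image of $\mathbb{G}(T)$ under the coordinate flip $(a,b) \mapsto (b,a)$, which is an isometry of $\Hil_1 \oplus \Hil_2$ onto $\Hil_2 \oplus \Hil_1$; since $T$ is closed, $T^{-1}$ is therefore closed. Because $\range{T}$ is a closed subspace of $\Hil_2$, it is itself a Hilbert space, so $T^{-1}$ is a closed operator defined on an entire Hilbert space. The closed graph theorem then gives that $T^{-1}$ is bounded, say with norm $C$; substituting $g = Tf$ yields $\|f\| = \|T^{-1}(Tf)\| \le C\|Tf\|$, which is exactly the (bb) condition with $m = 1/C$ (the degenerate case $\range{T} = \{0\}$, which forces $\domain{T} = \{0\}$ by injectivity, being trivially (bb)).

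The hard part will be the converse, where the closed graph theorem requires the domain of $T^{-1}$ to be complete; this is precisely where the closedness of $\range{T}$ is indispensable, since without it $T^{-1}$ would merely be a closed operator on an incomplete domain and could fail to be bounded. As a cross-check, I note that this proposition dovetails with Corollary \ref{sec:boundinvertunb2}: once injectivity and closed range are established for a closed $T$, that corollary identifies $T$ as (BIR), so the bounded inverse onto the range immediately produces the lower bound; conversely, (bb) yields exactly the injectivity and closed-range hypotheses required there.
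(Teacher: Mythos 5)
Your proof is correct. Note, however, that the paper does not actually prove Proposition \ref{prop:injcloran1}: it is quoted verbatim from the literature (Proposition 2.14 of the cited handbook), so there is no in-paper argument to compare against. Your write-up therefore supplies a proof the paper omits, and it does so with exactly the tools the paper itself develops for neighbouring results: your forward direction (Cauchy sequence of preimages via the lower bound, then closedness of $T$ to capture the limit) is the standard argument and is essentially the same mechanism the paper uses in Lemma \ref{bi}, while your converse (flip the graph, note $\range{T}$ is complete, apply the closed graph theorem, and invert the resulting bound) is precisely the graph-flip device of Theorem \ref{TT-1closed} combined with the closed graph theorem. Your handling of the degenerate case $\range{T}=\{0\}$ and your cross-check against Corollary \ref{sec:boundinvertunb2} are both sound; indeed, the proposition could alternatively be deduced from that corollary together with Proposition \ref{sec:boundinvertunb1}, which gives a slightly less elementary but shorter route.
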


This means that for closed operators (bb) and (BIR) are equivalent.\\ 

 By  Proposition \ref{4propertdens} and Proposition \ref{prop:injcloran1} we can see that if $T$ is a closed and densely defined operator then $T$ is surjective if and only if $T^*$ is (bb). In the following we  want to  extend this result that is known for densely defined and closable operators. Using the concepts of Section \ref{sec:adjoint1} we extend \cite[Theorem IV.5.13]{ka95-1} to unbounded operators as a simple consequence:

\begin{proposition}
 \label{eq:surj}
Let $T:\domain{T}\subseteq \Hil_1\to \Hil_2$ be an unbounded operator. The following are equivalent:
\begin{enumerate}
    \item $T^{\times}$ is surjective (resp. (bb)).
    \item $T^{\times \times}$ is (bb) (resp. surjective).
    \item $T^{\times \times}$ (resp. $T^{\times}$) is injective and has closed range.
\end{enumerate}
In particular, if $T$ is a closable and densely defined operator, then the following are equivalent:
\begin{enumerate} 
\item Its adjoint $T^*$ is surjective (resp. (bb)).
\item  $\overline{T}$ is (bb) (resp. surjective).
\item $\overline{T}$ (resp. $T^*$) is injective and has closed range. 
\end{enumerate}
\end{proposition}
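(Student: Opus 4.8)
The plan is to reduce the whole statement to the familiar closed, densely defined case through the canonical adjoint. Recall that, by construction, $T^{\times} = (T|_{\Hil_T})^*$ is a closed, densely defined operator from $\Hil_{T^{\times}}$ into $\Hil_T$. Being closed (hence closable) and densely defined, Proposition \ref{4propertdens}(2) tells us its adjoint $T^{\times\times} = ((T^{\times})|_{\Hil_{T^{\times}}})^*$ is again densely defined, this time in $\Hil_T$, and it is closed. Since $T^{\times}$ is closed, the double-adjoint identity of Proposition \ref{4propertdens}(3) gives $(T^{\times\times})^* = T^{\times}$. Thus, viewing $T^{\times}$ as an operator from $\Hil_{T^{\times}}$ into $\Hil_T$, the pair $(T^{\times}, T^{\times\times})$ is a genuine closed, densely defined operator–adjoint pair, and the only care required is to keep track of the spaces $\Hil_T$ and $\Hil_{T^{\times}}$ on which each acts.

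The key auxiliary fact is the one already recorded in the text preceding this proposition: for any closed, densely defined operator $A$, the operator $A$ is surjective if and only if $A^*$ is (bb). I would justify it exactly as indicated there. If $A^*$ is (bb), then by Proposition \ref{prop:injcloran1} it is injective with closed range, so by Proposition \ref{4propertdens}(5) $\range{A}$ is closed, while by Proposition \ref{4propertdens}(4) $\range{A}^{\perp} = \kernel{A^*} = \{0\}$, whence $A$ is surjective. Conversely, surjectivity of $A$ forces $\kernel{A^*} = \range{A}^{\perp} = \{0\}$ together with $\range{A^*}$ closed (Proposition \ref{4propertdens}(5)), and Proposition \ref{prop:injcloran1} then yields that $A^*$ is (bb).

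With this fact in hand the equivalences are immediate. Applying it to $A = T^{\times}$ shows that $T^{\times}$ is surjective if and only if $(T^{\times})^* = T^{\times\times}$ is (bb); applying it instead to $A = T^{\times\times}$, and using $(T^{\times\times})^* = T^{\times}$, shows that $T^{\times\times}$ is surjective if and only if $T^{\times}$ is (bb). This settles (1)$\Leftrightarrow$(2) in both the direct and the parenthetical reading. Finally, since $T^{\times}$ and $T^{\times\times}$ are both closed, Proposition \ref{prop:injcloran1} rewrites each occurrence of the (bb) condition as \emph{injective with closed range}, which is precisely assertion (3) in the respective reading, closing the cycle of equivalences.

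For the \emph{in particular} part I would simply specialise: when $T$ is densely defined the canonical adjoint coincides with the ordinary adjoint, $T^{\times} = T^*$, and when $T$ is in addition closable, $T^{\times\times} = T^{**} = \overline{T}$, as noted in the discussion preceding Lemma \ref{cloT}. Substituting these identities into the three equivalent assertions yields the stated list for $T^*$ and $\overline{T}$ verbatim. I expect the only genuinely delicate point to be the bookkeeping of the ambient spaces — verifying that $T^{\times\times}$ is densely defined in $\Hil_T$ and that $(T^{\times\times})^* = T^{\times}$ as operators between $\Hil_{T^{\times}}$ and $\Hil_T$ — since this is exactly what legitimises treating $(T^{\times}, T^{\times\times})$ as an ordinary adjoint pair; once that is secured, everything follows from the closed-range theorem and Proposition \ref{prop:injcloran1}.
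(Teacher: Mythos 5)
Your proof is correct and takes essentially the same route the paper intends: the paper presents this proposition as a ``simple consequence'' of the fact, obtained from Propositions \ref{4propertdens} and \ref{prop:injcloran1}, that a closed, densely defined operator is surjective exactly when its adjoint is (bb), applied to the closed, densely defined adjoint pair $\left(T^{\times},T^{\times\times}\right)$, with the ``in particular'' part following from $T^{\times}=T^{*}$ and $T^{\times\times}=\overline{T}$ in the densely defined, closable case. Your write-up merely makes explicit the auxiliary equivalence and the bookkeeping of the ambient spaces $\Hil_{T}$ and $\Hil_{T^{\times}}$, which the paper leaves implicit.
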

%
%

In summary we can state
\begin{corollary}\label{eq:BI}
\label{eq:BI}
Let $T:\domain{T}\subseteq \Hil_1\rightarrow \Hil_2$ be an operator. Then the following are equivalent:
\begin{enumerate}
    \item  $T$ is (BI).
    \item $\range{T}=\Hil_2$ and $T$ is (bb).
    \item $T$ is closed, $\range{T}$ is dense and $T$ is (bb).
    \item $T$ is closed, $\range{T}=\Hil_2$ and $\kernel{T}=\{0\}$.
\end{enumerate}
\end{corollary}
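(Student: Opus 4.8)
The plan is to prove the four conditions equivalent by establishing the single cyclic chain $(1)\Rightarrow(2)\Rightarrow(3)\Rightarrow(4)\Rightarrow(1)$, leaning on Proposition \ref{sec:boundinvertunb1} and Proposition \ref{prop:injcloran1} to pass between the algebraic notions (injectivity, surjectivity, bijectivity) and the analytic ones ((bb), closed range). The point of routing through all four rather than proving pairwise equivalences is that each arrow then requires only one of the cited results, so no duplication of argument is needed.

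For $(1)\Rightarrow(2)$ I would simply unwind the definition of (BI): let $S$ be the bounded inverse. The identity $TSg=g$ for all $g\in\Hil_2$ forces $\range{T}=\Hil_2$, and the identity $STf=f$ for $f\in\domain{T}$ gives $\|f\| = \|STf\|\le\|S\|\,\|Tf\|$, so $T$ is (bb) with $m=1/\|S\|$ (the degenerate case $\Hil_2=\{0\}$ being trivial). For $(3)\Rightarrow(4)$, since $T$ is closed and (bb), Proposition \ref{prop:injcloran1} yields that $T$ is injective (hence $\kernel{T}=\{0\}$) and has closed range; a range that is simultaneously closed and dense must equal $\Hil_2$, giving $\range{T}=\Hil_2$. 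For $(4)\Rightarrow(1)$, closedness together with injectivity and surjectivity means $T$ is closed and bijective, so Proposition \ref{sec:boundinvertunb1} delivers (BI) at once.

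The main obstacle is $(2)\Rightarrow(3)$, where I must extract closedness of $T$ from (bb) and surjectivity alone. Density of the range is immediate since $\range{T}=\Hil_2$, and (bb) is an assumption, so the only genuine content is closedness. Here I would argue via the inverse: (bb) makes $T$ injective with a bounded inverse $T^{-1}:\range{T}\to\domain{T}$ satisfying $\|T^{-1}g\|\le m^{-1}\|g\|$, and surjectivity promotes $T^{-1}$ to an everywhere-defined bounded operator on $\Hil_2$, which is therefore closed. Since the graph of $T$ is the image of the (closed) graph of $T^{-1}$ under the flip $(g,f)\mapsto(f,g)$, a linear isometry of $\Hil_2\oplus\Hil_1$ onto $\Hil_1\oplus\Hil_2$, the graph of $T$ is closed, i.e. $T$ is closed. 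This closes the cycle and establishes the corollary.
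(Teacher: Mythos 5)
Your proof is correct. For comparison: the paper never writes out a proof of this corollary --- it is presented (``In summary we can state'') as a consequence of Proposition \ref{sec:boundinvertunb1} and Proposition \ref{prop:injcloran1}, which is exactly how you handle $(3)\Rightarrow(4)$ and $(4)\Rightarrow(1)$. The one step with genuine content, $(2)\Rightarrow(3)$, is where you diverge from the paper's organisation: the fact that (bb) together with closed (here: full) range forces closedness is precisely what the paper isolates only \emph{after} this corollary, as Lemma \ref{bi} (proved there by a sequential argument: $x_n\to x$, $Tx_n\to y$, closed range gives $y\in\range{T}$, and boundedness of $T^{-1}$ gives $x=T^{-1}y\in\domain{T}$), with the graph-flip identity appearing separately as Theorem \ref{TT-1closed}. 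Your argument fuses these into one self-contained step (an everywhere-defined bounded $T^{-1}$ is closed, and closedness passes through the isometric flip $(g,f)\mapsto(f,g)$), so unlike the paper's implicit derivation it does not lean on material stated after the corollary --- a small but genuine improvement in logical ordering. One simplification you missed: under $(2)$, the inverse $S=T^{-1}$ is everywhere defined on $\Hil_2$ and bounded by (bb), and $TSg=g$, $STf=f$ hold by construction, so $(2)\Rightarrow(1)$ is immediate from the definition of (BI); your detour through closedness is needed only because your cycle routes $(2)$ through $(3)$ rather than directly back to $(1)$.
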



  

In the following we give an example of a bounded operator $T$ which is (BIR) and not (BI) and also an example which is (bb) and not (BIR).
 \begin{ex} \begin{enumerate}
     \item 
 Let $T$ be the standard shift, i.e. $$T(x_1,x_2,x_3,...)=(0,x_1,x_2,x_3,...)$$ on the space $\ell^2$ of one sided sequences. Clearly, $T$ is (bb) since $T$ is  isometric. However, $T$ is not (BI), as it is not surjective.  There is a bounded operator $S$, the inverse shift $S(x_1,x_2,x_3,...)=(x_2,x_3,...)$ such that $ST=1$, but there is no bounded operator $S$ such that $TS=1$.

 
 \item 
 
 Consider $T:\domain{T}\subseteq \ell^2\to \Hil$, given by
 $$T\{c_i\}_{i\in I}=\sum_{i=1}^{\infty}c_i(e_i+e_1), \qquad (\{c_i\}_{i\in I}\in \domain{T}).$$
 One can be seen that $T$ is densely defined operator (since finite sequences are dense in $\ell^2$) and is (bb) with lower bound 1. The adjoint is $$T^*:\domain{T^*}\subseteq \Hil \to \ell^2, \qquad (T^*f=\left\{\left<f, e_1+e_i\right>\right\}_{i\in I}).
$$
Actually, $\domain{T^*}=\overline{span}\{e_2, e_3,e_4,...\}$ and so it is not densely defined, hence $T$ is not closable and accordingly is not (BIR).

 \end{enumerate}
  \end{ex}
%

Let us show some simple results needed later:

\begin{lemma}\label{biclos}
Let $T:\domain{T}\subseteq \Hil_1 \rightarrow \Hil_2$ be a closable operator. The following assertions hold.
\begin{enumerate}
    \item $T$ is (bb) if and only if $\overline{T}$ is (bb).
    \item $T$ is densely defined if and only if $\overline{T}$ is densely defined.
    \item $T$ is symmetric  if and only if $\overline{T}$ is symmetric.
\end{enumerate}
\end{lemma}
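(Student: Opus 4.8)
The plan is to prove each of the three equivalences in Lemma \ref{biclos} by exploiting the fact that $T \subseteq \overline{T}$ together with the description of $\overline{T}$ via \eqref{Tbastform}, namely that every $x \in \domain{\overline{T}}$ is a limit $x = \lim_n x_n$ of points $x_n \in \domain{T}$ with $\overline{T}x = \lim_n T x_n$. The forward implication in each part is essentially automatic because $\overline{T}$ extends $T$; the substance lies in transferring properties from $T$ to its closure, which will always proceed by a density/continuity argument along such approximating sequences.

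For part (1), the direction ``$\overline{T}$ (bb) $\Rightarrow$ $T$ (bb)'' is immediate since $T \subseteq \overline{T}$, so the inequality $m\|f\| \le \|\overline{T}f\| = \|Tf\|$ holds in particular for $f \in \domain{T}$. For the converse, I would take $x \in \domain{\overline{T}}$, choose $x_n \in \domain{T}$ with $x_n \to x$ and $Tx_n \to \overline{T}x$, apply the bound $m\|x_n\| \le \|Tx_n\|$ to each $n$, and pass to the limit using continuity of the norm to obtain $m\|x\| \le \|\overline{T}x\|$. For part (2), one direction is \eqref{dom:=clo}, which already records that if $\overline{T}$ is densely defined then so is $T$; the other direction follows from $\domain{T} \subseteq \domain{\overline{T}}$, since a dense subset of a dense set is dense. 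This part is the quickest and relies only on the containment chain \eqref{dom:=clo}.

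For part (3), symmetry means $\langle Tf,g\rangle = \langle f,Tg\rangle$ for all $f,g \in \domain{T}$. If $\overline{T}$ is symmetric, restricting to $\domain{T}$ gives symmetry of $T$. For the converse, given $x,y \in \domain{\overline{T}}$ I would pick approximating sequences $x_n, y_n \in \domain{T}$ with $x_n \to x$, $Tx_n \to \overline{T}x$, $y_n \to y$, $Ty_n \to \overline{T}y$, write the identity $\langle Tx_n, y_n\rangle = \langle x_n, Ty_n\rangle$, and let $n \to \infty$; joint continuity of the inner product (each factor converges in norm) yields $\langle \overline{T}x, y\rangle = \langle x, \overline{T}y\rangle$, establishing symmetry of $\overline{T}$.

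The main obstacle is largely technical rather than conceptual: in parts (1) and (3) one must be careful that the chosen approximating sequences simultaneously control both the arguments and their images, which is exactly what \eqref{Tbastform} guarantees, and in part (3) that both sequences $x_n$ and $y_n$ can be selected for the two independent variables. No single step is genuinely hard, but the cleanliness of the argument hinges on invoking \eqref{Tbastform} correctly so that norm convergence of images (not merely weak convergence) is available for the limiting step; this is what makes the continuity passage legitimate.
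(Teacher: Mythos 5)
Your proposal is correct and follows essentially the same route as the paper: both directions of each part are handled by the extension relation $T \subseteq \overline{T}$ for the easy implication, and by approximating sequences from \eqref{Tbastform} together with continuity of the norm (part 1) and of the inner product (part 3) for the substantive one, with part (2) reduced to \eqref{dom:=clo}. The only cosmetic difference is that you pass to the limit along a single index in part (3) while the paper uses iterated limits in $n$ and $k$; both are legitimate since all convergences are in norm.
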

\begin{proof}
$(1).$
Assume that $T$ is (bb) with bound $A$ and $x\in \domain{\overline{T}}$. So consider  a  converging sequence $\{x_n\}$ in $\domain{T}$ with $x_n\rightarrow x$, such that $Tx_n\rightarrow y$. Then $\overline{T}x=y$. So,
\begin{eqnarray*}
\left\|\overline{T}x\right\|^2
&=&\left\|\lim \limits_{n \rightarrow \infty} Tx_n\right\|^2\\
&=& \lim_{n \to \infty} \left\|Tx_n\right\|^2\\
&\geq & A^2\lim \limits_{n \rightarrow \infty} \|x_n\|^2\\
&=& A^2\|\lim \limits_{n \rightarrow \infty} x_n\|^2=A^2\|x\|^2.
\end{eqnarray*}
The converse is clear.

$(2).$ If $T$ is a densely defined operator, then so is every extension. 
The reverse is correct by \eqref{dom:=clo}.

$(3).$ 
For the first direction assume that $x,y\in \domain{\overline{T}}$, then by the definition of $\domain{\overline{T}}$, there are sequences $\{x_n\}$ and $\{y_n\}$ in $\domain{T}$ converging $x$ and $y$, respectively such that $\{Tx_n\}$ and $\{Ty_n\}$ converge to $\overline{T}x$ and $\overline{T}y$, respectively. Hence, 
\begin{eqnarray*}
\left\langle \overline{T}x,y\right\rangle &=& \left\langle \lim_{n\rightarrow\infty}Tx_n,\lim_{k\rightarrow\infty}y_k\right\rangle \\
&=& \lim_{n\rightarrow\infty}\lim_{k\rightarrow\infty}\left\langle Tx_n,y_k\right\rangle \\
&=& \lim_{n\rightarrow\infty}\lim_{k\rightarrow\infty}\left\langle x_n,Ty_k\right\rangle \\
&=& \left\langle \lim_{n\rightarrow\infty}x_n,\lim_{k\rightarrow\infty}Ty_k\right\rangle = \left\langle x,\overline{T}y\right\rangle.
\end{eqnarray*}
The reverse  immediately follows by $(2)$.

\end{proof}


The following result solves the relation between the boundedly invertilbility of the adjoint of an operator and its closure when the operator is  closable.
 \begin{proposition}\label{eq:bb(BI)(BIR)}
Let $T:\domain{T}\subseteq \Hil_1 \rightarrow \Hil_2$ be an unbounded operator. Then $T^{\times}$ is (BI) if and only if $T^{\times \times}$ is (BI). In particular, if $T$ is
a densely defined  closable  operator, then
 $T^*$ is (BI) if and only if $\overline{T}$
is (BI). 
\end{proposition}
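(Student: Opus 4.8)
The plan is to reduce the statement to two characterizations that are already available: Corollary~\ref{eq:BI}, which expresses (BI) as the conjunction of surjectivity and (bb), and Proposition~\ref{eq:surj}, which exchanges surjectivity and (bb) when passing between $T^{\times}$ and $T^{\times \times}$. Since $T^{\times}=(T|_{\Hil_T})^*$ and $T^{\times \times}$ are both adjoints, they are automatically closed and densely defined on $\Hil_{T^{\times}}$ and $\Hil_{T^{\times \times}}$ respectively, so every closedness hypothesis demanded by these results is met for free. The only point requiring care is to keep track of the correct codomains, namely $\Hil_T$ for $T^{\times}$ and $\Hil_{T^{\times}}$ for $T^{\times \times}$, when interpreting the word ``surjective''.

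The core is then a short chain of equivalences. First, by item~(2) of Corollary~\ref{eq:BI} applied to $T^{\times}$, the operator $T^{\times}$ is (BI) if and only if $T^{\times}$ is surjective (onto $\Hil_T$) and $T^{\times}$ is (bb); note that (bb) already supplies injectivity, so no separate injectivity statement is needed. Next, I would invoke Proposition~\ref{eq:surj} in both of its \emph{resp.} readings simultaneously: $T^{\times}$ is surjective if and only if $T^{\times \times}$ is (bb), and $T^{\times}$ is (bb) if and only if $T^{\times \times}$ is surjective. Substituting these two equivalences turns the condition ``$T^{\times}$ surjective and (bb)'' into ``$T^{\times \times}$ (bb) and surjective''. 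Finally, applying item~(2) of Corollary~\ref{eq:BI} once more, this time to $T^{\times \times}$, identifies the latter conjunction with ``$T^{\times \times}$ is (BI)''. Reading the chain backwards yields the converse, so the general equivalence follows.

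For the ``in particular'' clause I would simply specialize: when $T$ is densely defined and closable, the identities recorded earlier give $T^{\times}=T^*$ and $T^{\times \times}=T^{**}=\overline{T}$, while the ambient spaces collapse to $\Hil_T=\Hil_1$. Hence the general statement reads verbatim as ``$T^*$ is (BI) if and only if $\overline{T}$ is (BI)''. I do not expect a genuine obstacle, since the mathematical content is carried entirely by Proposition~\ref{eq:surj} and Corollary~\ref{eq:BI}; the only thing to verify with some care is the codomain bookkeeping, i.e. that the surjectivity of $T^{\times}$ is understood relative to $\Hil_T$ and that of $T^{\times \times}$ relative to $\Hil_{T^{\times}}$, these being exactly the codomains for which Proposition~\ref{eq:surj} was established and for which (BI) is asserted. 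Once this alignment is fixed, the proof is a pure substitution of equivalences.
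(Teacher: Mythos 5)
Your proof is correct and is essentially identical to the paper's own argument: both use Corollary~\ref{eq:BI} to rewrite (BI) as ``surjective and (bb)'', then Proposition~\ref{eq:surj} (in both \emph{resp.}\ directions) to swap these two properties between $T^{\times}$ and $T^{\times\times}$, and Corollary~\ref{eq:BI} once more to conclude. Your extra care with the codomains and the explicit specialization $T^{\times}=T^*$, $T^{\times\times}=\overline{T}$ for the densely defined closable case are sound and match what the paper relies on implicitly.
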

\begin{proof} 
Assuming $T^{\times}$ to be (BI) implies that it is surjective and (bb) by Corollary \ref{eq:BI}. 
Applying  Proposition \ref{eq:surj} yields $T^{\times \times}$ is (bb) and surjective and so (BI) (by Corollary \ref{eq:BI}). The converse is the same argument in the opposite direction. 
\end{proof}




Because we deal with the related concepts in detail here, let us formulate \cite[Problem III.5.15]{ka95-1} in our new terminology: 
\begin{lemma}\label{bi}
Let $T:\domain {T}\subseteq \Hil_1\rightarrow \Hil_2$ be an  operator that is (bb) and has closed range. 
Then it is closed,  and consequently (BIR). 
\end{lemma}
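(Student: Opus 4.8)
The plan is to first extract from the (bb) hypothesis a bounded inverse defined on the range, then to use the closedness of the range to upgrade continuity of that inverse into closedness of $T$ itself, and finally to read off (BIR) from the earlier characterisations.

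First I would note that (bb) with constant $m>0$ forces $T$ to be injective, so the set-theoretic inverse $T^{-1}:\range{T}\to\domain{T}$ is well defined. Writing $g=Tf$, the inequality $m\|f\|\le\|Tf\|$ becomes $\|T^{-1}g\|\le \tfrac1m\|g\|$, so $T^{-1}$ is a bounded operator with $\|T^{-1}\|\le 1/m$, whose domain is the subspace $\range{T}$.

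The key step is to observe that, because $\range{T}$ is assumed closed, it is complete, and a bounded (hence continuous) operator whose domain is complete has closed graph. Concretely, if $g_n\to g$ with $g_n\in\range{T}$ and $T^{-1}g_n\to h$, then $g\in\range{T}$ by closedness of the range and $T^{-1}g_n\to T^{-1}g$ by continuity, so $h=T^{-1}g$; thus $T^{-1}$ is closed. Since the graph $\mathbb{G}(T)$ is the image of $\mathbb{G}(T^{-1})$ under the flip $(a,b)\mapsto(b,a)$, which is an isometric isomorphism of $\Hil_2\oplus\Hil_1$ onto $\Hil_1\oplus\Hil_2$, closedness of $\mathbb{G}(T^{-1})$ transfers to $\mathbb{G}(T)$, and therefore $T$ is closed. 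Equivalently, one can argue directly: if $x_n\to x$ and $Tx_n\to y$ with $x_n\in\domain{T}$, then $y\in\range{T}$ since the range is closed, and applying the bounded operator $T^{-1}$ gives $x_n=T^{-1}(Tx_n)\to T^{-1}y$, whence $x=T^{-1}y\in\domain{T}$ and $Tx=y$.

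Finally, once $T$ is known to be closed, I would conclude as follows: $T$ is closed, injective (by (bb)) and has closed range by hypothesis, and Corollary \ref{sec:boundinvertunb2} states exactly that such an operator is (BIR). Alternatively, Proposition \ref{prop:injcloran1} identifies (bb) with ``injective and closed range'' for closed operators, and the remark following it records that (bb) and (BIR) coincide for closed operators. I expect the only genuinely delicate point to be the closedness argument: the hypothesis that $\range{T}$ is closed must be used in an essential way, since it is precisely what guarantees that the limit $y$ stays in the range so that $T^{-1}y$ is meaningful, and one must be careful that no density or closability assumption on $T$ is secretly required, as $T$ is here an arbitrary, possibly non-densely-defined, operator.
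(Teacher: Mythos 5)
Your proof is correct and takes essentially the same approach as the paper: the ``direct'' argument you give (the limit $y$ stays in $\range{T}$ by closedness of the range, then the bounded inverse gives $x_n = T^{-1}Tx_n \to T^{-1}y$, so $x = T^{-1}y \in \domain{T}$ and $Tx = y$) is exactly the paper's proof, and the conclusion of (BIR) from injectivity, closedness and closed range matches the paper's use of its earlier characterisations. Your alternative packaging via closedness of $T^{-1}$ and the graph flip is the same idea the paper records separately in Theorem~\ref{TT-1closed}, so it is not a genuinely different route.
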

\begin{proof}
Let $\{x_n\}$ be a sequence in $\domain{T}$ converging to $x\in \Hil_1$ and let $Tx_n\rightarrow y$. Since $T$ has closed range, so $y\in \range{T}$. Moreover,  $T^{-1}:\range{T}\rightarrow \domain{T}$ is bounded 
and then we have that $x_n=T^{-1}Tx_n\rightarrow T^{-1}y$. Therefore, $x=T^{-1}y\in \domain{T}$. 
Then 
\begin{eqnarray*}
Tx=TT^{-1}y=y 
\end{eqnarray*}
 shows that $T$ is closed.

\end{proof}


Restricting Corollary \ref{eq:BI}  to $\range{T}$
and Lemma \ref{bi},
we can present equivalent conditions provided that an unbounded operator is (BIR).
\begin{corollary} \label{cor:BIR}
Let $T:\domain{T}\subseteq \Hil_1\rightarrow \Hil_2$  be an operator. Then the following are equivalent:
\begin{enumerate}
    \item $T$ is (BIR).
     \item $\range{T}$ is closed and $T$ is (bb).
    \item $T$ is closed and $T$ is (bb).
    \item $T$ is closed,
    $\range{T}$ is closed and $\kernel{T}=\{0\}$. 
\end{enumerate}
\end{corollary}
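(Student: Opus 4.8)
The goal is to establish the equivalence of four conditions characterizing (BIR) for an arbitrary operator $T$. Since (BIR) is defined as "$T$ has closed range and $T:\domain{T}\subseteq\Hil_1\to\range{T}$ is (BI)", the natural strategy is to reduce everything to the codomain $\range{T}$, where I can invoke Corollary \ref{eq:BI} directly, and to use Lemma \ref{bi} to handle the closedness subtleties that arise when $T$ is not a priori assumed closed.

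\bigskip

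\noindent\textbf{Proof proposal.}
The plan is to prove the chain $(1)\Rightarrow(2)\Rightarrow(3)\Rightarrow(4)\Rightarrow(1)$, leaning on the two results flagged in the statement.

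For $(1)\Rightarrow(2)$, I would unpack the definition of (BIR): it says precisely that $\range{T}$ is closed and that $T$, viewed as an operator onto $\range{T}$, is (BI). Now I apply Corollary \ref{eq:BI} to the operator $T:\domain{T}\subseteq\Hil_1\to\range{T}$ (taking $\Hil_2:=\range{T}$, which is a Hilbert space since $\range{T}$ is closed). By clause (2) of that corollary, being (BI) into $\range{T}$ forces $T$ to be (bb). Note that being (bb) is intrinsic to $T$ and does not depend on the choice of codomain, so this gives exactly condition (2) of the present corollary.

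For $(2)\Rightarrow(3)$, this is the step where Lemma \ref{bi} does the work: an operator that is (bb) and has closed range is automatically closed. So closedness, which is not assumed in (2), is deduced for free, and together with (bb) we obtain (3). For $(3)\Rightarrow(4)$, I observe that a (bb) operator is injective (if $Tf=0$ then $m\|f\|\le 0$, so $f=0$), hence $\kernel{T}=\{0\}$; it remains to produce closedness of $\range{T}$. Here I would again invoke Proposition \ref{prop:injcloran1}: for a closed operator, (bb) is equivalent to being injective with closed range, so the closedness of $\range{T}$ follows. That yields (4).

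Finally, for $(4)\Rightarrow(1)$, I have $T$ closed, $\range{T}$ closed, and $\kernel{T}=\{0\}$. Restricting the codomain to the closed space $\range{T}$, I check the hypotheses of Proposition \ref{sec:boundinvertunb1} (or Corollary \ref{sec:boundinvertunb2} directly): $T:\domain{T}\to\range{T}$ is closed, injective, and surjective onto its range, hence bijective as a map onto $\range{T}$, hence (BI) onto $\range{T}$ by Proposition \ref{sec:boundinvertunb1}. Since $\range{T}$ is closed, this is exactly the definition of (BIR). The only point requiring a little care is verifying that restricting the codomain preserves closedness of the graph; but the graph $\mathbb{G}(T)\subseteq\Hil_1\oplus\range{T}$ coincides with $\mathbb{G}(T)\subseteq\Hil_1\oplus\Hil_2$ as a set, and $\Hil_1\oplus\range{T}$ is a closed subspace of $\Hil_1\oplus\Hil_2$, so closedness transfers without issue. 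The main (and only mild) obstacle throughout is bookkeeping the codomain change and confirming that (bb), injectivity, and graph-closedness are all stable under this restriction, which they are.
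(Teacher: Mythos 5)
Your proof is correct and follows essentially the same route the paper indicates: the paper derives this corollary precisely by restricting Corollary \ref{eq:BI} to $\range{T}$ and invoking Lemma \ref{bi} (together with Proposition \ref{prop:injcloran1} and Corollary \ref{sec:boundinvertunb2}, which your cycle $(1)\Rightarrow(2)\Rightarrow(3)\Rightarrow(4)\Rightarrow(1)$ uses in the same roles). Your write-up simply makes explicit the codomain-restriction bookkeeping that the paper leaves implicit.
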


 Again, let us add some generalization to published results\cite{ka95-1} reducing the invertibility condition to injectivity.   
\begin{theorem}\label{TT-1closed} 
Let $\Hil_1$, $\Hil_2$ be Hilbert spaces and  $T:\domain {T}\subseteq \Hil_1\rightarrow \Hil_2$ 
is injective. Then $T$ is closed if and only if $T^{-1}$ is closed.
\end{theorem}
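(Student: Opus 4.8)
The plan is to reduce everything to the graph characterization of closedness, together with the elementary observation that passing from $T$ to $T^{-1}$ corresponds to swapping the two coordinates of the product space. Since $T$ is injective, the inverse $T^{-1}:\range{T}\subseteq\Hil_2\to\Hil_1$ is a well-defined single-valued linear operator with $\domain{T^{-1}}=\range{T}$ and $\range{T^{-1}}=\domain{T}$, and its inverse is again $T$. This symmetry means it suffices to establish one implication: the converse then follows by applying the same argument to $T^{-1}$ in place of $T$ (which is itself injective with inverse $T$).

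First I would introduce the flip map $J:\Hil_1\oplus\Hil_2\to\Hil_2\oplus\Hil_1$ defined by $J(x,y)=(y,x)$. This $J$ is a linear bijection that preserves norms, hence a unitary between the two Hilbert direct sums; in particular it is a homeomorphism with continuous inverse $J^{-1}(y,x)=(x,y)$, so it carries closed sets to closed sets in both directions.

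Next I would compute the two graphs explicitly and relate them through $J$. By definition $\mathbb{G}(T)=\left\{(h,Th):h\in\domain{T}\right\}$, while every element of $\domain{T^{-1}}=\range{T}$ has the form $Th$ with $T^{-1}(Th)=h$, so $\mathbb{G}(T^{-1})=\left\{(Th,h):h\in\domain{T}\right\}$. Comparing the two descriptions yields the key identity $\mathbb{G}(T^{-1})=J\big(\mathbb{G}(T)\big)$.

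Finally, since $J$ is a topological isomorphism, $\mathbb{G}(T)$ is closed if and only if $J\big(\mathbb{G}(T)\big)=\mathbb{G}(T^{-1})$ is closed, which is exactly the statement that $T$ is closed if and only if $T^{-1}$ is closed. I expect no genuine obstacle here; the only points requiring a little care are to record that $J$ is a homeomorphism (so that closedness is transported in both directions, not merely one) and to note that the injectivity hypothesis on $T$ is precisely what guarantees that $T^{-1}$ is a single-valued operator, so that $\mathbb{G}(T^{-1})$ is indeed a graph rather than just a subspace of $\Hil_2\oplus\Hil_1$.
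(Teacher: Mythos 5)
Your proof is correct and follows essentially the same route as the paper: both identify $\mathbb{G}(T^{-1})$ with the image of $\mathbb{G}(T)$ under the canonical flip $(x,y)\mapsto(y,x)$ between $\Hil_1\oplus\Hil_2$ and $\Hil_2\oplus\Hil_1$, and conclude that the two graphs are closed simultaneously. Your write-up is in fact slightly more careful than the paper's, since you explicitly record that the flip is a homeomorphism (so closedness transfers in both directions) and that injectivity is what makes $T^{-1}$ a well-defined operator.
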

\begin{proof}
We have that 
\begin{eqnarray*} \mathbb{G}(T)&:=&\left\{(h,T h)\in \Hil_1\oplus \Hil_2, h\in \domain{T}\right\}\\
& =& \left\{(T^{-1}h,h)\in \Hil_1\oplus \Hil_2, h\in \domain{T^{-1}}\right\} =: \mathbb{G}'(T^{-1}).
\end{eqnarray*}
 By using the canonical isomorphism from $\Hil_1 \oplus \Hil_2$ onto $\Hil_2 \oplus \Hil_1$ - i.e. $(x,y) \mapsto (y,x)$ - we have 
that $\mathbb{G}(T^{-1})$ is closed if and only if $\mathbb{G}'(T^{-1}) = \mathbb{G}(T)$ is.
\end{proof} 

By Propositions  \ref{sec:boundinvertunb1}, \ref{prop:injcloran1} and Lemma \ref{bi} we can summarize: 
\begin{corollary}\label{2of 3}
Let $T:\domain{T}\subseteq \Hil_1\rightarrow \Hil_2$ be an  injective
operator between Hilbert spaces. Then each two of the following assertions imply the third:
\begin{enumerate}
    \item $T$ is (bb).
    \item $T$ is closed.
    \item $T$ has closed range.
\end{enumerate}
In particular, in this case, $T$ is (BIR).
\end{corollary}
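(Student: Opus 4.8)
The plan is to prove the ``two imply the third'' statement by treating each of the three pairs separately, invoking in each case exactly one of the three results cited in the corollary's preamble; throughout, the injectivity of $T$ is available as a standing hypothesis from the statement, so I may freely combine it with whichever two of (1)--(3) are assumed.

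First I would handle the implication (1) and (3) $\Rightarrow$ (2). Here I assume $T$ is (bb) and has closed range, and apply Lemma \ref{bi} verbatim: it concludes that $T$ is closed (and in fact already (BIR)), with no further work. Next, for (2) and (3) $\Rightarrow$ (1), I assume $T$ is closed and has closed range. Since $T$ is now closed, the hypothesis of Proposition \ref{prop:injcloran1} is met, and I use its ``if'' direction: an injective closed operator with closed range is (bb). Injectivity comes from the standing assumption and closed range from (3), so (1) follows. Finally, for (1) and (2) $\Rightarrow$ (3), I assume $T$ is (bb) and closed. Again $T$ is closed, so Proposition \ref{prop:injcloran1} applies, and this time I read it in the ``only if'' direction: a closed operator that is (bb) is injective and has closed range, which delivers (3).

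For the concluding ``in particular'' clause, I observe that once any two of (1)--(3) hold, all three hold, so $T$ is simultaneously (bb), closed, and of closed range; then $T$ is (BIR) either directly by the last assertion of Lemma \ref{bi} or, equivalently, via the characterization in Corollary \ref{cor:BIR}. (At the level of the underlying mechanism, this is exactly Proposition \ref{sec:boundinvertunb1} applied to the corestriction $T:\domain{T}\to\range{T}$, which is closed, injective, and surjective onto its range, hence (BI) there.) The only point that needs care --- and the nearest thing to an obstacle --- is the \emph{order} in which the cited results may be invoked: Proposition \ref{prop:injcloran1} presupposes that $T$ is closed, so it cannot be used to prove (1) and (3) $\Rightarrow$ (2), where closedness is precisely the conclusion. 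That is why the first implication must be routed through Lemma \ref{bi}, which produces closedness from (bb) together with closed range without assuming it in advance.
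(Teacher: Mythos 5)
Your proof is correct and follows essentially the same route as the paper, which states this corollary precisely as a summary of Proposition \ref{prop:injcloran1} and Lemma \ref{bi} (with Proposition \ref{sec:boundinvertunb1} in the background for the (BIR) conclusion), pairing each implication with the same cited result you use. Your remark that the implication (1) and (3) $\Rightarrow$ (2) must go through Lemma \ref{bi} rather than Proposition \ref{prop:injcloran1}, since the latter presupposes closedness, is exactly the right point of care.
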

%

\section{The Restricted Frame-related Operators}\label{sec:restframop0}

One of the main goals of this paper is to
classify sequences where $C_\Psi$ is not necessarily densely defined. 
For that let us restrict the frame-related operators to the Hilbert space $\Hil_{\Psi}\subseteq \Hil$.
We 
link every sequence $\Psi$ with its related sequence $\pi_{\Hil_{\Psi}}\Psi$ as
a sequence in $\Hil_{\Psi}$. Consequently,
we can define the frame related operators associated to $\pi_{\Hil_{\Psi}}\Psi$. 
We denote the analysis 
operator
of the sequence $\pi_{\Hil_{\Psi}}\Psi$ in $\Hil_{\Psi}$
by 
$$C_\Psi^r:
\domain{C_\Psi^r}\subseteq \Hil_{\Psi}\to \ell^2  ,$$
so
$$C_\Psi^r (f) = C_{\pi_{\Hil_T} \Psi} (f) = \left( \left< f, \psi_k\right>_{\Hil_T}\right)_{k \in K}.$$
The synthesis operator is 
$$ D_\Psi^r 
: \domain{D_\Psi^r} \subseteq \ell^2 \rightarrow \Hil_{\Psi}, . $$
Therefore
$$D_\Psi^r c = D_{\pi_{\Hil_T} \Psi} c = \sum \limits_{k \in K} c_k \pi_{\Hil_T} \psi_k.$$

The operator  $C_\Psi^r$ is a densely defined, closed operator and   $D_\Psi^r$  is always closable (and densely defined).
Here, 
interestingly, \begin{eqnarray}\label{asso-seq}
\overline{D_{\Psi}^r}= \left(  C_\Psi^r   \right )^*={C}_{\Psi}^{\times}.
\end{eqnarray}
As a direct consequence $\pi_{\Hil_T} D_\Psi \subseteq D_\Psi^r$.

We call those operators the \emph{restricted analysis (synthesis) operator} of $\Psi$.
\\

One could consider $\pi_{\Hil_{\Psi}}\Psi$
as a sequence in $\Hil$, instead of one in $\Hil_T$. The example below illustrate that the $\Hil$-case is more complicated than $\Hil_{\Psi}$-case.  

\begin{ex}
Define the sequence
$$\Psi=\{e_1,e_1,e_2,e_1,e_2,e_3,e_1,e_2,e_3,e_4,...\}.$$
We note that $\domain{C_{\Psi}}=\{0\}=\Hil_{\Psi}$. Considering, $\pi_{\Hil_{\Psi}}\Psi=\{0\}$ as a sequence in $\Hil$, then 
$\domain{C_{\pi_{\Hil_{\Psi}}\Psi}}=\Hil=\Hil_{\Psi}^{\perp}.$
\end{ex}
Considering $\pi_{\Hil_{\Psi}}\Psi$ as a sequence in $\Hil$, we observe that
 \begin{eqnarray}\label{dom=C}\domain{C_{\pi_{\Hil_{\Psi}}\Psi}}=\left\{f\in \Hil: ~ ~  \{\left<f, \pi_{\Hil_{\Psi}}\psi_i\right>\}_{i\in I}\in \ell^2\right\}=\domain{C_{\Psi}}\oplus \Hil_{\Psi}^{\perp}.\end{eqnarray}
This illustrates that $C_{\pi_{\Hil_{\Psi}}\Psi}$ is densely defined in $\Hil$.
In addition,   $C_{\Psi}\subseteq C_{\pi_{\Hil_{\Psi}}\Psi}$ and $\pi_{\Hil_{\Psi}}D_{\Psi}\subseteq D_{\pi_{\Hil_{\Psi}}\Psi}$. Also, \begin{eqnarray}\label{Cpisi:Csi}
  C_{\pi_{\Hil_{\Psi}}\Psi}=C_{\Psi}\pi_{\Hil_{\Psi}}=C_{\Psi}^r\pi_{\Hil_{\Psi}}.
\end{eqnarray}
%
Very canonical, we define $S_\Psi^r: \Hil_\Psi \rightarrow \Hil_\Psi$ and $S_{\pi_{\Hil_{\Psi}}\Psi}:\Hil\to \Hil$. 
Since $C_{\Psi}^r$ and $C_{\pi_{\Hil_{\Psi}}\Psi}$ are densely defined in $\Hil_{\Psi}$ and $\Hil$, respectively, and as a result of Proposition  \ref{xxlstoeant11} (5),
$S_{\Psi}^r$ and $S_{\pi_{\Hil_{\Psi}}\Psi}$ are closable operators.

On the other hand $\pi_{\Hil_{\Psi}}D_{\Psi}\subseteq D_{\pi_{\Hil_{\Psi}}\Psi}$ and so
$$\pi_{\Hil_{\Psi}}S_{\Psi}\subseteq \pi_{\Hil_{\Psi}}S_{\Psi}\pi_{\Hil_{\Psi}}=\pi_{\Hil_{\Psi}}S_{\Psi}^r\pi_{\Hil_{\Psi ,}}\subseteq S_{\pi_{\Hil_{\Psi}}\Psi}.$$
Consequently, $\pi_{\Hil_{\Psi}}S_{\Psi}$ is always closable.
At the same time one can see that  $\domain{S_\Psi} \oplus \Hil_\Psi^\bot\subseteq \domain{S_{\pi_{\Hil_\Psi} \Psi}}$.

In summary, we observe the following relations:
\begin{proposition}\label{com:=op}
Let $\Psi$ be a sequence in $\Hil$. The following assertions hold.
\begin{enumerate}
\item $C_{\pi_{\Hil_{\Psi}}\Psi}$ is densely defined.
\item $\pi_{\Hil_{\Psi}}D_{\Psi}$ and $D_{\pi_{\Hil_{\Psi}}\Psi}$ are closable. 
\item $S_{\Psi}^r$ and $S_{\pi_{\Hil_{\Psi}}\Psi}$ are closable.
    \item $ C_{\Psi}^r\subseteq C_{\Psi}\subseteq C_{\pi_{\Hil_{\Psi}}\Psi}$. 
    \item $\pi_{\Hil_{\Psi}}D_{\Psi}\subseteq \pi_{\Hil_{\Psi}}D_{\pi_{\Hil_{\Psi}}\Psi}=D_{\Psi}^r \subseteq D_{\pi_{\Hil_{\Psi}}\Psi}\subseteq \overline{D_{\Psi}^r}$.    
    \item $\pi_{\Hil_{\Psi}}S_{\Psi}\subseteq S_{\Psi}^r \subseteq \pi_{\Hil_{\Psi}}S_{\Psi}\pi_{\Hil_{\Psi}}
     \subseteq  S_{\pi_{\Hil_{\Psi}}\Psi}.$
\end{enumerate}
\end{proposition}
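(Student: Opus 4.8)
The plan is to verify the six items essentially by bookkeeping, since almost all of the needed relations have already been extracted in the discussion preceding the statement; the only genuine work is a careful comparison of domains. I would organise the proof by first disposing of the density and closability claims, then the two operator chains for $C$ and $D$, and finally the chain for the frame operators, where the difficulty concentrates.

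For items (1)--(3) I would argue as follows. Item (1) is exactly \eqref{dom=C}, since $\domain{C_{\pi_{\Hil_\Psi}\Psi}}=\domain{C_\Psi}\oplus\Hil_\Psi^\perp$ has closure $\overline{\domain{C_\Psi}}\oplus\Hil_\Psi^\perp=\Hil_\Psi\oplus\Hil_\Psi^\perp=\Hil$. For (2), $D_{\pi_{\Hil_\Psi}\Psi}$ is closable by Proposition \ref{xxlstoeant11}(3), because its adjoint $C_{\pi_{\Hil_\Psi}\Psi}$ is densely defined by (1); and $\pi_{\Hil_\Psi}D_\Psi$ is closable as a restriction of the closable operator $D_\Psi^r$ (a restriction of a closable operator is closable, since the closure of its graph sits inside the closure of a graph, hence is itself a graph). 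Item (3) is the statement already recorded before the proposition: as $C_\Psi^r$ and $C_{\pi_{\Hil_\Psi}\Psi}$ are densely defined, $S_\Psi^r$ and $S_{\pi_{\Hil_\Psi}\Psi}$ are closable by Proposition \ref{xxlstoeant11}(5).

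Items (4) and (5) are pure domain-chasing. Since $\domain{C_\Psi}\subseteq\overline{\domain{C_\Psi}}=\Hil_\Psi$, the operators $C_\Psi^r$ and $C_\Psi$ have the same domain and agree on it (for $f\in\Hil_\Psi$ one has $\langle f,\pi_{\Hil_\Psi}\psi_k\rangle_{\Hil_\Psi}=\langle f,\psi_k\rangle$), which gives $C_\Psi^r\subseteq C_\Psi$; the inclusion $C_\Psi\subseteq C_{\pi_{\Hil_\Psi}\Psi}$ is \eqref{dom=C} together with \eqref{Cpisi:Csi}. For (5) the key observation is that each $\pi_{\Hil_\Psi}\psi_k$ lies in the \emph{closed} subspace $\Hil_\Psi$, so a series $\sum_k c_k\pi_{\Hil_\Psi}\psi_k$ converges in $\Hil$ if and only if it converges in $\Hil_\Psi$; this makes $\pi_{\Hil_\Psi}D_{\pi_{\Hil_\Psi}\Psi}$, $D_\Psi^r$ and $D_{\pi_{\Hil_\Psi}\Psi}$ coincide as graphs, and the final inclusion into $\overline{D_\Psi^r}$ is then immediate from \eqref{asso-seq}.

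The main obstacle is item (6), and specifically the middle inclusion $S_\Psi^r\subseteq\pi_{\Hil_\Psi}S_\Psi\pi_{\Hil_\Psi}$. The outer inclusions are safe: $\pi_{\Hil_\Psi}S_\Psi\subseteq S_\Psi^r$ follows by pushing the convergent series $\sum_k\langle f,\psi_k\rangle\psi_k$ through the bounded map $\pi_{\Hil_\Psi}$ (using $\domain{S_\Psi}\subseteq\domain{C_\Psi}\subseteq\Hil_\Psi$), and $\pi_{\Hil_\Psi}S_\Psi\pi_{\Hil_\Psi}\subseteq S_{\pi_{\Hil_\Psi}\Psi}$ follows from $\langle\pi_{\Hil_\Psi}f,\psi_k\rangle=\langle f,\pi_{\Hil_\Psi}\psi_k\rangle$ together with the same projection argument. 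The delicate point is that $f\in\domain{S_\Psi^r}$ only guarantees convergence of the \emph{projected} series $\sum_k\langle f,\psi_k\rangle\pi_{\Hil_\Psi}\psi_k$ in $\Hil_\Psi$, whereas $f\in\domain{\pi_{\Hil_\Psi}S_\Psi\pi_{\Hil_\Psi}}$ demands convergence of the \emph{full} series $\sum_k\langle f,\psi_k\rangle\psi_k$ in $\Hil$; equivalently one must establish the domain containment $\domain{S_\Psi^r}\subseteq\domain{S_\Psi}$. Since for $f\in\Hil_\Psi$ the coefficients satisfy $\langle f,\psi_k\rangle=\langle f,\pi_{\Hil_\Psi}\psi_k\rangle$ and therefore do not control the complementary components $\pi_{\Hil_\Psi^\perp}\psi_k$, convergence of the projected series does not by itself force convergence of $\sum_k\langle f,\psi_k\rangle\pi_{\Hil_\Psi^\perp}\psi_k$. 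I would therefore concentrate the effort here: either derive $\domain{S_\Psi^r}\subseteq\domain{S_\Psi}$ from the standing hypotheses, or fix the precise convention under which $S_\Psi^r$ is to be compared with $\pi_{\Hil_\Psi}S_\Psi\pi_{\Hil_\Psi}$. This is the single step most in need of a careful, self-contained argument, and I expect it to be the crux of the proof.
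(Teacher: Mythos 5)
Your arguments for items (1)--(5) are correct, and on item (5) you take a genuinely different and cleaner route than the paper. The paper's proof addresses only the inclusion $D_{\pi_{\Hil_{\Psi}}\Psi}\subseteq\overline{D_{\Psi}^r}$, and does so indirectly: it shows $\domain{C_{\pi_{\Hil_{\Psi}}\Psi}^{*}}\subseteq\domain{\left(C_{\Psi}^{r}\right)^{*}}$, concludes $\overline{D}_{\pi_{\Hil_{\Psi}}\Psi}\subseteq\overline{D_{\Psi}^{r}}$, and declares every other assertion ``clear''. Your observation that all vectors $c_k\pi_{\Hil_{\Psi}}\psi_k$ lie in the closed subspace $\Hil_{\Psi}$, so that $\pi_{\Hil_{\Psi}}D_{\pi_{\Hil_{\Psi}}\Psi}$, $D_{\Psi}^{r}$ and $D_{\pi_{\Hil_{\Psi}}\Psi}$ have identical graphs, yields that inclusion (in fact an equality) with no adjoint machinery at all.

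The step you flagged in item (6) is indeed a genuine gap, but it is a gap in the proposition rather than in your proposal: the middle inclusion $S_{\Psi}^{r}\subseteq\pi_{\Hil_{\Psi}}S_{\Psi}\pi_{\Hil_{\Psi}}$ is false in general, so no argument could have closed it. Take an orthonormal basis $\{e_k\}_{k\geq0}$ of $\Hil$ and put $\psi_k=e_k+k\,e_0$ for $k\geq1$. Then $\domain{C_{\Psi}}=\{e_0\}^{\perp}$, hence $\Hil_{\Psi}=\{e_0\}^{\perp}$ and $\pi_{\Hil_{\Psi}}\psi_k=e_k$; consequently $\pi_{\Hil_{\Psi}}\Psi$ is an orthonormal basis of $\Hil_{\Psi}$, so that $S_{\Psi}^{r}=\identity{\Hil_{\Psi}}$ with $\domain{S_{\Psi}^{r}}=\Hil_{\Psi}$. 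But for $f_0=\sum_{k\geq1}k^{-3/2}e_k\in\Hil_{\Psi}$ the series
\[
S_{\Psi}f_0=\sum_{k\geq1}k^{-3/2}\left(e_k+k\,e_0\right)
\]
diverges, because the $e_0$-component of its $N$-th partial sum equals $\sum_{k=1}^{N}k^{-1/2}\to\infty$. Hence $\pi_{\Hil_{\Psi}}f_0=f_0\notin\domain{S_{\Psi}}$, so $f_0\notin\domain{\pi_{\Hil_{\Psi}}S_{\Psi}\pi_{\Hil_{\Psi}}}$ although $f_0\in\domain{S_{\Psi}^{r}}$; the same example refutes the equality $\pi_{\Hil_{\Psi}}S_{\Psi}\pi_{\Hil_{\Psi}}=\pi_{\Hil_{\Psi}}S_{\Psi}^{r}\pi_{\Hil_{\Psi}}$ claimed in the text preceding the proposition. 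Exactly as you suspected, the coefficients $\left<f,\pi_{\Hil_{\Psi}}\psi_k\right>$ give no control over the components $\pi_{\Hil_{\Psi}^{\perp}}\psi_k$. What is true, and what your two outer arguments essentially already establish, are the chains $\pi_{\Hil_{\Psi}}S_{\Psi}\subseteq S_{\Psi}^{r}\subseteq S_{\pi_{\Hil_{\Psi}}\Psi}$ and $\pi_{\Hil_{\Psi}}S_{\Psi}\subseteq\pi_{\Hil_{\Psi}}S_{\Psi}\pi_{\Hil_{\Psi}}\subseteq S_{\pi_{\Hil_{\Psi}}\Psi}$; the operators $S_{\Psi}^{r}$ and $\pi_{\Hil_{\Psi}}S_{\Psi}\pi_{\Hil_{\Psi}}$ are simply not comparable. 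So the correct resolution of your dilemma is your second alternative: the statement itself must be amended, and the paper's proof, which treats only part of item (5) and dismisses the rest as clear, offers no help on this point.
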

\begin{proof}
We have only to show 
$(5)$: We just have to prove $D_{\pi_{\Hil_{\Psi}}\Psi}\subseteq \overline{D_{\Psi}^r}$ and the rest is clear.
Assume that $c\in \domain{C_{\pi_{\Hil_{\Psi}}\Psi}^*}$. Then for all $f\in \domain{C_{\pi_{\Hil_{\Psi}}\Psi}}$, the function $f\mapsto \left<C_{\pi_{\Hil_{\Psi}}\Psi}f,c\right>$ is bounded. Using item $(3)$, for all $f\in \domain{C_{\Psi}^r}$ the function $f \mapsto \left< c, C_{\Psi}^r f \right>$ is bounded and so $c\in \domain{\left(C_{\Psi}^r\right)^*}$. Then $\overline{D}_{\pi_{\Hil_{\Psi}}\Psi}\subseteq \overline{D_{\Psi}^r}$ and so $D_{\pi_{\Hil_{\Psi}}\Psi}\subseteq \overline{D_{\Psi}^r}$
\end{proof}
\begin{corollary}
Let $\Psi\subseteq \Hil$ be a sequence such that $S_{\Psi}$ be densely defined. Then $S_{\Psi}$ is closable, and $S_{\Psi}^r$, $S_{\pi_{\Hil_{\Psi}}\Psi}$ are densely defined.
\end{corollary}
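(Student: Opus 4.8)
The plan is to settle the three claims separately, with the closability statement being essentially free and the two density statements following by transporting the density of $\domain{S_\Psi}$ along the inclusion chain in Proposition \ref{com:=op} (6). Since $S_\Psi$ is densely defined and $\domain{S_\Psi}\subseteq\domain{C_\Psi}$, the analysis operator $C_\Psi$ is densely defined, so Proposition \ref{xxlstoeant11} (5) gives that $S_\Psi$ is closable, as already observed in the preliminaries; this disposes of the first assertion with no further work.

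For $S_{\pi_{\Hil_{\Psi}}\Psi}$ I would read off from the chain in Proposition \ref{com:=op} (6), by transitivity, that $\pi_{\Hil_{\Psi}}S_\Psi\subseteq S_{\pi_{\Hil_{\Psi}}\Psi}$, hence $\domain{S_\Psi}=\domain{\pi_{\Hil_{\Psi}}S_\Psi}\subseteq\domain{S_{\pi_{\Hil_{\Psi}}\Psi}}$ (the first equality holds because $\pi_{\Hil_{\Psi}}$ is everywhere defined, so post-composing with it does not shrink the domain). Here both operators act in the ambient space $\Hil$, so no adjustment of the underlying space is needed: the left-hand domain is dense in $\Hil$ by hypothesis, and therefore so is the larger domain $\domain{S_{\pi_{\Hil_{\Psi}}\Psi}}$, yielding dense definedness in $\Hil$.

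The case of $S_\Psi^r$ is the only place requiring care, because $S_\Psi^r$ lives in the smaller space $\Hil_\Psi$, whereas $\domain{S_\Psi}$ is a priori only known to be dense in $\Hil$. The same first inclusion $\pi_{\Hil_{\Psi}}S_\Psi\subseteq S_\Psi^r$ from Proposition \ref{com:=op} (6) gives $\domain{S_\Psi}\subseteq\domain{S_\Psi^r}\subseteq\Hil_\Psi$. The key observation making the ambient spaces compatible is that $\domain{S_\Psi}\subseteq\domain{C_\Psi}\subseteq\overline{\domain{C_\Psi}}=\Hil_\Psi$; since $\Hil_\Psi$ is a closed subspace containing the $\Hil$-dense set $\domain{S_\Psi}$, it must in fact equal $\Hil$, so $\domain{S_\Psi}$ is dense in $\Hil_\Psi$ as well and $\domain{S_\Psi^r}$ inherits this density. (Equivalently, one may remark directly that the hypothesis forces $\Hil_\Psi=\Hil$, whence $\pi_{\Hil_{\Psi}}=\identity{\Hil}$ and all three operators $S_\Psi$, $S_\Psi^r$, $S_{\pi_{\Hil_{\Psi}}\Psi}$ coincide.)

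I expect the only genuine subtlety to be this matching of ambient Hilbert spaces for $S_\Psi^r$; once the inclusion $\domain{S_\Psi}\subseteq\Hil_\Psi$ is invoked, everything reduces to the elementary fact that a set which is dense in $\Hil$ and contained in a closed subspace forces that subspace to be the whole of $\Hil$. No estimates or limiting arguments are needed beyond the inclusions already collected in Proposition \ref{com:=op}.
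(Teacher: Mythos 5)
Your proof is correct and follows essentially the same route as the paper: dense definedness of $C_\Psi$ (via $\domain{S_\Psi}\subseteq\domain{C_\Psi}$) gives closability of $S_\Psi$, and the inclusion chain $\pi_{\Hil_\Psi}S_\Psi\subseteq S_\Psi^r\subseteq S_{\pi_{\Hil_\Psi}\Psi}$ together with $\domain{\pi_{\Hil_\Psi}S_\Psi}=\domain{S_\Psi}$ transfers the density to both operators. Your additional observation that the hypothesis forces $\Hil_\Psi=\Hil$ (so the ambient-space issue for $S_\Psi^r$ disappears) is a point the paper leaves implicit, and your appeal to item (6) of Proposition~\ref{com:=op} is the correct reference, where the paper's citation of item (5) appears to be a typo.
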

\begin{proof}
Assume that $S_{\Psi}$ is a densely defined operator. Then $C_{\Psi}$ is densely defined since $\domain{S_{\Psi}}\subseteq \domain{C_{\Psi}}$. Therefore, $S_{\Psi}$ is closable by \cite[Proposition 3.3. (vii)]{xxlstoeant11}. Moreover, applying the equation $\domain{\pi_{\Hil_{\Psi}}S_{\Psi}}=\domain{S_{\Psi}}$ and Proposition \ref{com:=op} $(5)$, we have densely defined property of $S_{\Psi}^r$ and $S_{\pi_{\Hil_{\Psi}}\Psi}$.
\end{proof}

\section{Classification by the Synthesis Operator}

In this section we will fill the mentioned gap in the result \cite[Proposition 4.2. (f)]{xxlstoeant11}  regarding the classification by the synthesis operator (alone). 

If
 $T:\domain{T}\subseteq \Hil_1\rightarrow \Hil_2$ is  a densely defined and closable operator, applying Proposition \ref{eq:surj} 
 $T^*$ is (bb) if and only if 
    $\overline{T}$ is surjective. 
       For any sequence $\Psi$ in $\Hil$,  $\overline{D}_{\Psi}$ is always densely defined. 
       Assume $D_{\Psi}$ to be a closable operator then $C_{\Psi}$ is (bb) if and only if $\overline{D}_{\Psi}$ is surjective
       or $C_{\Psi}$ is surjective if and only if $\overline{D}_{\Psi}$ is (bb). Therefore:
\begin{corollary}\label{eq:clD to psi}
Let $\Psi$ be a sequence and $D_{\Psi}$ a closable operator. Then the following statements hold.
\begin{enumerate}
\item $\Psi$ is a lower frame sequence if and only if $\overline{D}_{\Psi}$ is surjective. 
\item $\Psi$ is a Riesz-Fischer sequence if and only if $\overline{D}_{\Psi}$ is injective and has closed range 
\end{enumerate}
\end{corollary}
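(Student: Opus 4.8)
The plan is to read each sequence condition as a bounded-below or surjectivity statement about the frame-related operators and then feed it directly into the results of Section 3. First I would record the standing facts. The finite scalar sequences lie in $\domain{D_\Psi}$ and are dense in $\ell^2$, so $D_\Psi$ is densely defined and $C_\Psi=D_\Psi^*$ by Proposition \ref{xxlstoeant11}(1). Under the hypothesis that $D_\Psi$ is closable, $C_\Psi$ is densely defined and $\overline{D}_\Psi=D_\Psi^{**}=C_\Psi^*$ by Proposition \ref{4propertdens}(2)--(3). The key reformulation is that the defining inequality \eqref{lowprop} of a lower frame sequence says exactly that $C_\Psi$ is (bb), and the Riesz-Fischer inequality says that $D_\Psi$ is (bb) on finite sequences.

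For (1), I would apply the ``in particular'' clause of Proposition \ref{eq:surj} to the densely defined closable operator $T=D_\Psi$. Since $T^*=C_\Psi$, the equivalence ``$T^*$ is (bb) $\iff$ $\overline{T}$ is surjective'' reads precisely as ``$C_\Psi$ is (bb) $\iff$ $\overline{D}_\Psi$ is surjective''. Combined with the observation that the lower frame condition is the same as $C_\Psi$ being (bb), this is the claim.

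For (2), the only subtlety is that the Riesz-Fischer bound $\sqrt{A'}\,\|c\|\le\|D_\Psi c\|$ is imposed only on finite sequences, whereas I want $D_\Psi$ to be (bb) on all of $\domain{D_\Psi}$. I would close this gap by a truncation argument: for $c\in\domain{D_\Psi}$ the partial sums $s_n=\sum_{i=1}^n c_i\psi_i$ are images of the finite truncations $c^{(n)}$ and converge to $D_\Psi c$, while $c^{(n)}\to c$ in $\ell^2$; passing to the limit in $\sqrt{A'}\,\|c^{(n)}\|\le\|s_n\|$ yields $\sqrt{A'}\,\|c\|\le\|D_\Psi c\|$. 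Hence $D_\Psi$ is (bb), and by Lemma \ref{biclos}(1) so is $\overline{D}_\Psi$. As $\overline{D}_\Psi$ is closed, Proposition \ref{prop:injcloran1} converts ``(bb)'' into ``injective with closed range'', giving one direction. The converse runs backwards through the same chain: an injective, closed-range, closed operator is (bb) by Proposition \ref{prop:injcloran1}, so $D_\Psi$ is (bb) by Lemma \ref{biclos}(1), and in particular the bound holds on finite sequences, i.e. $\Psi$ is Riesz-Fischer.

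I expect the truncation step in (2) to be the main (though mild) obstacle, since it is the only place where one must argue that the finite-sequence hypothesis propagates to the whole domain; the remainder is bookkeeping with Propositions \ref{eq:surj} and \ref{prop:injcloran1} and Lemma \ref{biclos}. One point to keep straight is reading the ``(resp.)'' in Proposition \ref{eq:surj} consistently, pairing (bb) of the adjoint $C_\Psi$ with surjectivity of the closure $\overline{D}_\Psi$ in part (1).
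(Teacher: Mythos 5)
Your proof is correct. Part (1) is essentially identical to the paper's argument: identify the lower frame condition with $C_\Psi=D_\Psi^*$ being (bb) and apply Proposition \ref{eq:surj} to the densely defined closable operator $D_\Psi$.

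For part (2), however, you take a genuinely different route. The paper stays on the analysis side: it invokes the classical characterization that $\Psi$ is a Riesz-Fischer sequence if and only if $C_\Psi$ is surjective (this is \cite[Proposition 4.1]{xxlstoeant11}, cited explicitly in the proof of Corollary \ref{mathDlowriesz}), and then reads off from Proposition \ref{eq:surj} that $C_\Psi$ surjective $\iff$ $\overline{D}_\Psi$ is (bb) $\iff$ $\overline{D}_\Psi$ is injective with closed range. You instead work directly from the definition on the synthesis side: your truncation argument ($c^{(n)}\to c$, $D_\Psi c^{(n)}=s_n\to D_\Psi c$, pass to the limit in the finite-sequence bound) shows that the Riesz-Fischer inequality on finite sequences is equivalent to $D_\Psi$ being (bb) on all of $\domain{D_\Psi}$, after which Lemma \ref{biclos}(1) transfers (bb) to $\overline{D}_\Psi$ and Proposition \ref{prop:injcloran1} converts it to injectivity plus closed range. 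Your version is more self-contained — it never needs the moment-problem duality result from the literature, only the elementary limiting argument — and it makes explicit a fact the paper leaves implicit, namely that the finite-sequence bound propagates to the whole domain of $D_\Psi$. The paper's version is shorter given its citations and has the advantage of exhibiting the symmetry between the two statements (both drop out of the single ``(resp.)'' statement of Proposition \ref{eq:surj} applied to $T=D_\Psi$), which is also closer in spirit to how the result is later rephrased for arbitrary sequences in Corollary \ref{mathDlowriesz}.
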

 As a result of Corollary \ref{eq:clD to psi}, if $\Psi$ is a lower frame sequence, then $D_{\Psi}$ is surjective if and only if $\range{D_{\Psi}}$ is closed.
By Corollaries \ref{cor:BIR} and \ref{eq:clD to psi}, we can state that a sequence $\Psi$ with closable synthesis operator $D_{\Psi}$ is a Riesz-Fischer sequence if and only if $\overline{D}_{\Psi}$ is (BIR). 
Also, $\Psi$ is a lower frame sequence if and only if $C_{\Psi}$ is (BIR).

Corollary \ref{eq:clD to psi} can already be considered a much more canonical result that the classification in \cite{xxlstoeant11} - assuming $D_\Psi$ to be closable, though. It extends \cite[Proposition 4.5]{Corso2019GeneralizedFO}.  
By using the restricted analysis and synthesis operator, see Section \ref{sec:restframop0},  Corollary \ref{eq:clD to psi} it can be rephrased for arbitrary sequences, without this assumption.

\begin{corollary}\label{mathDlowriesz}
Assume that $\Psi$ is a sequence in $\Hil$. The following assertions hold.
\begin{enumerate}
\item[(1)] The following statements are equivalent:
\begin{enumerate}
    \item $\Psi$ is a lower frame sequence in $\Hil$.  \item
 $\overline{D_{\Psi}^r}$ is surjective.
 \item $\pi_{\Hil_{\Psi}\Psi}$  is a lower frame sequence in $\Hil_{\Psi}$.
 \item $\pi_{\Hil_{\Psi}\Psi}$  is a lower frame sequence in $\Hil$.

    \end{enumerate}
    \item[(2)] The following statements are equivalent:
\begin{enumerate}
    \item $\Psi$ is a Riesz-Fischer sequence in $\Hil$. 
    \item
 $\overline{D_{\Psi}^r}$ is injective and has closed range.
 \item $\pi_{\Hil_{\Psi}\Psi}$  is a Riesz-Fischer sequence in $\Hil_{\Psi}$.
 \item $\pi_{\Hil_{\Psi}\Psi}$  is a Riesz-Fischer sequence in $\Hil$.
    \end{enumerate}
\end{enumerate}
\end{corollary}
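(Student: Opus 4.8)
The plan is to treat $\pi_{\Hil_{\Psi}}\Psi$, viewed as a sequence in $\Hil_{\Psi}$, as the ``good'' object to which the closable-case result Corollary \ref{eq:clD to psi} applies verbatim, and then transfer each statement about $\Psi$ in $\Hil$ to it. By Section \ref{sec:restframop0} the synthesis operator of $\pi_{\Hil_{\Psi}}\Psi$ in $\Hil_{\Psi}$ is exactly $D_{\Psi}^r$, which is closable and densely defined, with $\overline{D_{\Psi}^r} = (C_{\Psi}^r)^*$. Hence Corollary \ref{eq:clD to psi}, applied to $\pi_{\Hil_{\Psi}}\Psi$ inside $\Hil_{\Psi}$, immediately yields the equivalence of (b) and (c) in both parts: $\overline{D_{\Psi}^r}$ is surjective iff $\pi_{\Hil_{\Psi}}\Psi$ is a lower frame sequence in $\Hil_{\Psi}$, and $\overline{D_{\Psi}^r}$ is injective with closed range iff $\pi_{\Hil_{\Psi}}\Psi$ is Riesz--Fischer in $\Hil_{\Psi}$. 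It then remains to link (a) and (d) to (c).

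For part (1) the equivalence (a)$\Leftrightarrow$(c) is essentially definitional: since $\Hil_{\Psi} = \overline{\domain{C_{\Psi}}}$ we have $\domain{C_{\Psi}} \subseteq \Hil_{\Psi}$, so $C_{\Psi}^r = C_{\Psi}|_{\Hil_{\Psi}}$ has the \emph{same} domain as $C_{\Psi}$ and acts identically there, while $\|f\|_{\Hil_{\Psi}} = \|f\|$ for $f \in \Hil_{\Psi}$. Thus the lower bound \eqref{lowprop} for $C_{\Psi}$ is literally the lower bound for $C_{\Psi}^r$, i.e. (a) holds iff $C_{\Psi}^r$ is (bb) iff (c) holds. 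For (d) I would use \eqref{dom=C} and \eqref{Cpisi:Csi}: writing $f = g + h$ with $g \in \domain{C_{\Psi}}$ and $h \in \Hil_{\Psi}^{\perp}$ gives $C_{\pi_{\Hil_{\Psi}}\Psi} f = C_{\Psi}^r g$, so the inequality in $\Hil$ restricted to $f \in \Hil_{\Psi}$ reproduces (c), giving (d)$\Rightarrow$(c) cleanly.

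For part (2) the roles are reversed, since Riesz--Fischer is a condition on the synthesis side. Here (c)$\Leftrightarrow$(d) is immediate: the Riesz--Fischer inequality for $\pi_{\Hil_{\Psi}}\Psi$ only involves $\|\sum_k c_k \pi_{\Hil_{\Psi}}\psi_k\|$, and as each $\pi_{\Hil_{\Psi}}\psi_k \in \Hil_{\Psi}$ this norm is the same whether computed in $\Hil_{\Psi}$ or in $\Hil$, so the two statements coincide. For (a)$\Leftrightarrow$(c), the direction (c)$\Rightarrow$(a) is the estimate $\|D_{\Psi}c\| \ge \|\pi_{\Hil_{\Psi}}D_{\Psi}c\| = \|D_{\Psi}^r c\| \ge \sqrt{A'}\|c\|$ for finite $c$. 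For (a)$\Rightarrow$(c) I would pass through surjectivity of the analysis operator: if $\Psi$ is Riesz--Fischer then, by the classical moment-problem argument (the lower bound makes $\sum_k c_k\psi_k \mapsto \sum_k c_k d_k$ well defined, Cauchy--Schwarz makes it bounded, and Riesz representation produces a solving vector), $C_{\Psi}$ is surjective onto $\ell^2$. Since $C_{\Psi}^r$ and $C_{\Psi}$ share domain and action they share their range, so $C_{\Psi}^r$ is surjective as well; applying Proposition \ref{eq:surj} to $D_{\Psi}^r$ then gives that $\overline{D_{\Psi}^r}$ is (bb), which is (c). Together with (b)$\Leftrightarrow$(c) this closes part (2).

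The step I expect to be the genuine obstacle is (c)$\Rightarrow$(d) in part (1). The computation above breaks down for $f$ with a nonzero component $h \in \Hil_{\Psi}^{\perp}$: there $C_{\pi_{\Hil_{\Psi}}\Psi} f = C_{\Psi}^r g$ sees only $g$, so $\|C_{\pi_{\Hil_{\Psi}}\Psi} f\| \ge \sqrt{A}\,\|g\|$ cannot control $\sqrt{A}\,\|f\| = \sqrt{A}\sqrt{\|g\|^2 + \|h\|^2}$. Indeed (d) would force $C_{\pi_{\Hil_{\Psi}}\Psi}$ to be (bb) on all of $\domain{C_{\Psi}}\oplus\Hil_{\Psi}^{\perp}$, which is possible only when $\Hil_{\Psi}^{\perp} = \{0\}$; equivalently $\pi_{\Hil_{\Psi}}\Psi$ is complete in $\Hil$ only when $\Hil_{\Psi} = \Hil$. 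Hence the passage (c)$\Rightarrow$(d) requires establishing $\Hil_{\Psi} = \Hil$, i.e. that $\domain{C_{\Psi}}$ is dense, and this is the point most in need of care: I would either have to invoke an additional density hypothesis on $\Psi$ or read (d) relative to $\Hil_{\Psi}$, since a lower frame sequence need not have dense analysis domain.
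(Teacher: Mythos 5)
Your proofs of the equivalences (a)$\Leftrightarrow$(b)$\Leftrightarrow$(c) in part (1) and of all of part (2) are correct, and they follow essentially the paper's own route: the paper also gets (1)(a)$\Leftrightarrow$(1)(b) from $C_\Psi=C_\Psi^r=\left(\overline{D_\Psi^r}\right)^*$ together with Proposition \ref{eq:surj}, gets (b)$\Leftrightarrow$(c) in both parts from Corollary \ref{eq:clD to psi} applied in $\Hil_\Psi$, and in part (2) passes through surjectivity of $C_\Psi$ (citing \cite[Proposition 4.1]{xxlstoeant11}, which is exactly the moment-problem argument you re-derive). Your two deviations are harmless simplifications: you note that (1)(a)$\Leftrightarrow$(1)(c) is purely definitional, since $C_\Psi^r$ and $C_\Psi$ share domain and action and the norms agree, and you obtain (2)(c)$\Leftrightarrow$(2)(d) from the identity $D_{\pi_{\Hil_\Psi}\Psi}c=D_\Psi^r c$ for finite $c$ instead of the paper's detour through surjectivity of the analysis operators.

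The obstacle you isolate at (1)(c)$\Rightarrow$(1)(d) is not a gap in your argument but a genuine flaw in the statement, and the paper's proof stumbles exactly where you predict. The paper argues (b)$\Rightarrow$(d) by noting $\pi_{\Hil_\Psi}\overline{D_\Psi^r}\subseteq \overline{D}_{\pi_{\Hil_\Psi}\Psi}$, declaring $\overline{D}_{\pi_{\Hil_\Psi}\Psi}$ to be ``onto'', and invoking Corollary \ref{eq:clD to psi}; but this only shows $\range{\overline{D}_{\pi_{\Hil_\Psi}\Psi}}\supseteq \Hil_\Psi$, whereas Corollary \ref{eq:clD to psi}, applied to $\pi_{\Hil_\Psi}\Psi$ as a sequence in $\Hil$, needs surjectivity onto all of $\Hil$ --- impossible when $\Hil_\Psi\neq\Hil$, since every $\pi_{\Hil_\Psi}\psi_i$ lies in $\Hil_\Psi$. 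Your completeness analysis is exactly right: by \eqref{dom=C}, any $0\neq h\in\Hil_\Psi^\perp$ lies in $\domain{C_{\pi_{\Hil_\Psi}\Psi}}$ with $C_{\pi_{\Hil_\Psi}\Psi}h=0$, so (1)(d) forces $\Hil_\Psi^\perp=\{0\}$. Concretely, take $\Hil=\Hil_1\oplus\Hil_2$ and let $\Psi$ consist of an orthonormal basis $\{u_i\}$ of $\Hil_1$ together with $\{v_1,v_1,v_2,v_1,v_2,v_3,\dots\}$ for an orthonormal basis $\{v_j\}$ of $\Hil_2$: then $\Hil_\Psi=\Hil_1$ and (a), (b), (c) hold, while (d) fails; the paper's own example with $\Hil_\Psi=\{0\}$ is the degenerate version of this. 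So (1)(d) is equivalent to the rest only under the extra hypothesis that $C_\Psi$ is densely defined (equivalently, $D_\Psi$ closable), in which case $\pi_{\Hil_\Psi}\Psi=\Psi$ and (d) collapses into (a), or if one reads (d) relative to $\Hil_\Psi$, merging it with (c) --- precisely the repairs you propose. Part (2)(d) is immune to this problem, since the Riesz--Fischer condition never sees vectors in $\Hil_\Psi^\perp$.
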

\begin{proof}
For the first part we observe

$(a\Leftrightarrow b)$ For every sequence $\Psi$, the operator $\overline{D_{\Psi}^r}$ is a closed and densely defined operator. Then $\Psi$ is a lower frame sequence in $\Hil$ if and only if $C_{\Psi}= C_{\Psi}^r
 =\left(\overline{D_{\Psi}^r}\right)^*$
is (bb) if and only if $\overline{D_{\Psi}^r}$ is onto by Proposition \ref{eq:surj}. 

$(b\Leftrightarrow c)$ 
is Corollary \ref{eq:clD to psi}.


$(b\Rightarrow d)$ Using \eqref{Cpisi:Csi}, $\pi_{\Hil_{\Psi}}\overline{D_{\Psi}^r}\subseteq \overline{D}_{\pi_{\Hil_{\Psi}}\Psi}$. The assumption shows that $\overline{D}_{\pi_{\Hil_{\Psi}}\Psi}$ is onto and so $\pi_{\Hil_{\Psi}}\Psi$ is a lower frame sequence in $\Hil$ by Corollary \ref{eq:clD to psi}.

$(d\Rightarrow a)$ 
Employing \eqref{Cpisi:Csi}, for all $f\in \domain{C_{\Psi}}$, $C_{\Psi}f=C_{\pi_{\Hil_{\Psi}}\Psi}f$. Therefore, if $\pi_{\Hil_{\Psi}}\Psi$ is a lower frame sequence in $\Hil$ or equivalently, $C_{\pi_{\Hil_{\Psi}}\Psi}$ is (bb), then $C_{\Psi}$ is (bb) and consequently $\Psi$ is a lower frame sequence in $\Hil$.



For the second part note that

$(a\Leftrightarrow b)$
 The sequence $\Psi$ is a Riesz-Fischer sequence in $\Hil$ if and only if $C_{\Psi}$ is surjective or equivalently $C_{\Psi}^r$ is surjective (\cite[Proposition 4.1]{xxlstoeant11}) if and only if ${{C}_{\Psi}^r}^*=\overline{D_{\Psi}^r}$
is (bb) by Proposition \ref{eq:surj}.

$(b\Leftrightarrow c)$ is Corollary \ref{eq:clD to psi}.

$(c\Leftrightarrow d)$
Applying \eqref{Cpisi:Csi}, $C_{\pi_{\Hil_{\Psi}}\Psi}$ is surjective if and only if $C_{\Psi}^r$ is surjective and so the claim follows immediately by \cite[Proposition 4.1]{xxlstoeant11}.


\end{proof}


An easy consequence of this corollary is one part of \cite[Theorem 3.2]{casoleli1}: Every complete Riesz-Fischer sequence is a minimal lower frame sequence. The other parts deal with independence,  so we can state
if $\Psi$ is a Riesz-Fischer sequence, then $\Psi$ is  minimal by definition of Riesz-Fischer sequences and also is $\omega$-independent.
As stated in \cite{casoleli1} if $D_{\Psi}$ is closed and surjective, then the converse  is true.
Later with our technique we can show in Corollary \ref{RC=LM}  that  it always holds.


\section{Sesquilinear form of sequences}

To treat the cases of lower frame sequence and Riesz-Fischer sequence further we now use a concept of generalized frame operators that was introduced in \cite{Corso2019} by applying representation theorems \cite{ka95-1}, for a sequence with closable synthesis operator and extended in \cite{Corso2019GeneralizedFO} for a generic sequence. 
We will extend this approach. 

\subsection{The Generalized Frame Operator} \label{sec:genframop1}

Assume that $\Psi$ is a sequence  and $D_{\Psi}$ is closable (equivalent to $C_\Psi$ being densely defined). Then the non-negative sesquilinear form 
$$\sum_{i\in I}\langle f,\psi_i\rangle \langle \psi_i,g\rangle, \qquad (f,g\in \domain{C_{\Psi}})$$
is closed, densely defined and symmetric.
Then using  Kato's representation theorem  (\cite[Theorem  VI.2.23]{ka95-1}), there is a positive and self-adjoint operator $\Gamma_{\Psi}$ such that $\domain{\Gamma_{\Psi}}\subseteq \domain{C_{\Psi}}$ and $$\Omega_{\Psi}(f,g)=\left\langle \Gamma_{\Psi}f,g\right\rangle, \qquad (f\in \domain{\Gamma_{\Psi}}, g\in \domain{C_{\Psi}}).$$
It is obvious to see that  $\Gamma_{\Psi}=C_{\Psi}^*C_{\Psi}.$ 
Note that $\domain{\Gamma_{\Psi}}$ is a core of $\Omega_{\Psi}$ (\cite[Theorem VI.2.1]{ka95-1})  and so it is dense in $\domain{C_{\Psi}}$.

Moreover,  $\Gamma_{\Psi}$ and $S_{\Psi}$ are equal whenever
 $\Psi$ is a Bessel sequence. Generally, the operator $\Gamma_{\Psi}$ is an extension of $S_{\Psi}$, i.e. $S_\Psi \subseteq \Gamma_{\Psi}$ and so it  is called \textit{generalized frame operator} of $\Psi$. 
If we 
assume that $S_{\Psi}$ is closable and densely defined, then
$$\overline{S}_{\Psi}=(D_{\Psi}C_{\Psi})^{**}\subseteq (C_{\Psi}^*D_{\Psi}^*)^*=\Gamma_{\Psi}^*=\Gamma_{\Psi}\qquad .$$
For more on this relations see Prop. \ref{cor:65}. 
\\

For arbitrary sequence $S_\Psi$ does not even have to be closable on $\Hil$,
 in while it is always closable on $\Hil_{\Psi}$ since $C_{\Psi}$ is densely defined
on $\Hil_{\Psi}$.
In the following, we present an example of a sequence such that its frame operator is not a closable operator on $\Hil$.
\begin{ex}
Let $\{e_i\}_{i\in I}$ be  an orthonormal basis for a Hilbert space $\Hil$. 
Consider the sequence 
$\Psi=\left\{(1+i)e_i\right\}_{i\in I}$.
For all $f\in \domain{S_{\Psi}}$,
$$S_{\Psi}f=\sum_{i\in I}(1+i)^2\left<f,e_i\right>e_i.$$
We can observe that $\left(\frac{1}{n^2}e_n\right)$ is a sequence in $\domain{S_{\Psi}}$ and $\frac{1}{n^2}e_n \to 0$, in while 
$S_{\Psi}\left(\frac{1}{n^2}e_n\right) \to 1$ and therefore $S_{\Psi}$ is not closable.
\end{ex}

It is known that in the above setting, i.e. if $D_\Psi$ is closable then $S_\Psi$ is also \cite[Proposition 3.3]{xxlstoeant11}. 
The operator $S_\Psi$ can be closable, even if $D_\Psi$ is not closable, see the next example:

\begin{ex} Let $\Psi = \left( e_1, e_2, e_1, e_3, e_1, e_4,  \dots \right)$, then $S_\Psi$ is closed (it is even bounded on its domain), but $\domain{C_\Psi} = \clsp{e_2,e_3,e_4, \dots} \not= \Hil$.

$$S_\Psi f = \sum \limits_{k = 2}^\infty  \left< f, e_1 \right> e_1 + \sum \limits_{k=1}^\infty \left< f, e_k\right> e_k = \sum \limits_{k = 2}^\infty  \left< f, e_1 \right> e_1 + f. $$
Therefore $S_\Psi$ is not well-defined on $\clsp{e_1}$ ($\domain{S_{\Psi}}\subseteq \domain{C_{\Psi}}$) and it is the identity on  $\clsp{e_2,e_3,e_4, \dots}$.

\end{ex}

We now want to go beyond this setting, and aim at the definition of a generalized frame operator for arbitrary sequences. 
In \cite{Corso2019GeneralizedFO}, the author showed that if $D_{\Psi}$ is not  closable, then we can restrict the Hilbert space $\Hil$ to $\Hil_{\Psi}$. 
Then $\Omega_{\Psi}$ is a sesquilinear form in $\Hil_{\Psi}$  fulfilling the above assumptions. 
Similarly, there is a positive self-adjoint operator $\Gamma_{\Psi}:\domain{\Gamma_{\Psi}}\subseteq \Hil_{\Psi}\rightarrow \Hil_{\Psi}$ that $\domain{\Gamma_{\Psi}^{1/2}}=\domain{C_{\Psi}}$ and 
$$\Omega_{\Psi}(f,g)=\left\langle \Gamma_{\Psi}^{1/2}f,  \Gamma_{\Psi}^{1/2}g\right\rangle, \qquad (f,g\in \domain{C_{\Psi}}),$$
where $\Gamma_{\Psi}^{1/2}:\domain{\Gamma_{\Psi}^{1/2}}\subseteq \Hil_{\Psi}\rightarrow \Hil_{\Psi}$ is positive and the square root of $\Gamma_{\Psi}$.  

Note that if $\Psi$ is a  sequence  in $\Hil$, then $\Gamma_{\Psi}$ is always a densely defined operator in $\Hil_{\Psi}$. Moreover, $\Gamma_{\Psi}$ is a densely defined operator in $\Hil$ if and only if  $D_{\Psi}$ is a closable operator. Indeed, if $C_{\Psi}$ is a densely defined operator then $C_{\Psi}$ and $C_{\Psi}^*$ are closed and densely defined operators and so  $\Gamma_{\Psi}=C_{\Psi}^*C_{\Psi}$ is  densely defined  by Proposition  \ref{4propertdens} (6). Conversely, by the Kato's second representation theorem $\domain{\Gamma_{\Psi}}\subseteq \domain{C_{\Psi}}$ and so, if $\Gamma_{\Psi}$ is seen densely defined then $C_{\Psi}$ is densely defined. 
Consequently, 
\begin{equation} \label{eq:restrictgamma1}
\Gamma_{\Psi}= C_{\Psi}^{\times}C_{\Psi}= \left(C_{\Psi}^r\right)^* C_{\Psi}^r= \Gamma_{\Psi}^{r}
\end{equation}
and so the "restricted generalized frame operator" $\Gamma_{\Psi}^r$ is equal to $\Gamma_{\Psi}$, and therefore densely defined in $\Hil_{\Psi}$.
Moreover, \eqref{Cpisi:Csi} follows that $$\pi_{\Hil_{\Psi}}\Gamma_{\Psi}\subseteq\pi_{\Hil_{\Psi}}\Gamma_{\Psi}\pi_{\Hil_{\Psi}}=\pi_{\Hil_{\Psi}}\Gamma_{\Psi}^r\pi_{\Hil_{\Psi}}\subseteq\Gamma_{\pi_{\Hil_{\Psi}}\Psi}.$$
Indeed, the first inclusion is pointed out of this fact that $\domain{\Gamma_{\Psi}}\subseteq \domain{C_{\Psi}}$. Also,
\begin{eqnarray*}
\Gamma_{\pi_{\Hil_{\Psi}}\Psi}&=& C_{\pi_{\Hil_{\Psi}}\Psi}^*C_{\pi_{\Hil_{\Psi}}\Psi}\\
&=& (C_{\Psi}\pi_{\Hil_{\Psi}})^*C_{\Psi}\pi_{\Hil_{\Psi}}\\
&\supseteq & \pi_{\Hil_{\Psi}}C_{\Psi}^*C_{\Psi}\pi_{\Hil_{\Psi}}\\
&=& \pi_{\Hil_{\Psi}}\Gamma_{\Psi}\pi_{\Hil_{\Psi}}=  \pi_{\Hil_{\Psi}}\Gamma_{\Psi}^r\pi_{\Hil_{\Psi}}.
\end{eqnarray*}
Also, from $\domain{\Gamma_{\pi_{\Hil_{\Psi}}\Psi}}\subseteq \domain{C_{\pi_{\Hil_{\Psi}}\Psi}}$ and  $\overline{\domain{C_{\pi_{\Hil_{\Psi}}\Psi}}}=\Hil$ it follows that $\Gamma_{\pi_{\Hil_{\Psi}}\Psi}$ is densely defined in $\Hil$.

Moreover, we can compare the generalized frame operator of a lower frame sequence with its related sequence.
\begin{proposition}\label{com:=Ga}
Let $\Psi\subseteq \Hil$. Then 
$
\Gamma_{\Psi}^r\subseteq\Gamma_{\pi_{\Hil_{\Psi}}\Psi}.
$
\end{proposition}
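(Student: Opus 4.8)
The plan is to read off the statement from the inclusion chain already assembled immediately before it, namely $\pi_{\Hil_{\Psi}}\Gamma_{\Psi}^r\pi_{\Hil_{\Psi}}\subseteq\Gamma_{\pi_{\Hil_{\Psi}}\Psi}$, and to show that prepending and appending the projection $\pi_{\Hil_{\Psi}}$ changes nothing on the domain of $\Gamma_{\Psi}^r$. Thus the whole argument reduces to verifying the single extra inclusion $\Gamma_{\Psi}^r\subseteq\pi_{\Hil_{\Psi}}\Gamma_{\Psi}^r\pi_{\Hil_{\Psi}}$ and then chaining.

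First I would record the structural facts about $\Gamma_{\Psi}^r$. By \eqref{eq:restrictgamma1} it equals $\left(C_{\Psi}^r\right)^*C_{\Psi}^r$, an operator acting \emph{within} $\Hil_{\Psi}$, so that $\domain{\Gamma_{\Psi}^r}\subseteq\domain{C_{\Psi}^r}\subseteq\Hil_{\Psi}$ and $\range{\Gamma_{\Psi}^r}\subseteq\Hil_{\Psi}$. Consequently, for every $f\in\domain{\Gamma_{\Psi}^r}$ one has $f\in\Hil_{\Psi}$, hence $\pi_{\Hil_{\Psi}}f=f$, and also $\Gamma_{\Psi}^r f\in\Hil_{\Psi}$, hence $\pi_{\Hil_{\Psi}}\Gamma_{\Psi}^r f=\Gamma_{\Psi}^r f$. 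Combining these two identities gives $\pi_{\Hil_{\Psi}}\Gamma_{\Psi}^r\pi_{\Hil_{\Psi}}f=\Gamma_{\Psi}^r f$ for all such $f$, and since $\pi_{\Hil_{\Psi}}$ fixes $\domain{\Gamma_{\Psi}^r}$ this domain is contained in $\domain{\pi_{\Hil_{\Psi}}\Gamma_{\Psi}^r\pi_{\Hil_{\Psi}}}$; therefore $\Gamma_{\Psi}^r\subseteq\pi_{\Hil_{\Psi}}\Gamma_{\Psi}^r\pi_{\Hil_{\Psi}}$.

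Finally I would chain this with the established $\pi_{\Hil_{\Psi}}\Gamma_{\Psi}^r\pi_{\Hil_{\Psi}}\subseteq\Gamma_{\pi_{\Hil_{\Psi}}\Psi}$ to conclude $\Gamma_{\Psi}^r\subseteq\Gamma_{\pi_{\Hil_{\Psi}}\Psi}$. The only point requiring genuine care — the main (and really the sole) obstacle — is the bookkeeping of ambient spaces: $\Gamma_{\Psi}^r$ is an operator $\Hil_{\Psi}\to\Hil_{\Psi}$ while $\Gamma_{\pi_{\Hil_{\Psi}}\Psi}$ is an operator $\Hil\to\Hil$, so the symbol $\subseteq$ must be read as an inclusion of graphs under the identification $\Hil_{\Psi}\subseteq\Hil$. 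Once this identification is fixed, every step above is immediate because $\pi_{\Hil_{\Psi}}$ restricts to the identity on $\Hil_{\Psi}$; no further estimate or representation-theorem input is needed.
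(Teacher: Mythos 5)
Your proposal is correct: since $\Gamma_{\Psi}^r=\left(C_{\Psi}^r\right)^*C_{\Psi}^r$ has both domain and range inside $\Hil_{\Psi}$, the projections act as the identity there, so $\Gamma_{\Psi}^r\subseteq\pi_{\Hil_{\Psi}}\Gamma_{\Psi}^r\pi_{\Hil_{\Psi}}$, and chaining with the inclusion $\pi_{\Hil_{\Psi}}\Gamma_{\Psi}^r\pi_{\Hil_{\Psi}}\subseteq\Gamma_{\pi_{\Hil_{\Psi}}\Psi}$ displayed immediately before the proposition gives the claim. This is essentially the paper's own argument, which rests on the same two ingredients --- the identity $C_{\pi_{\Hil_{\Psi}}\Psi}=C_{\Psi}\pi_{\Hil_{\Psi}}$ (from which that chain was derived) and the fact that $\Gamma_{\Psi}$ lives entirely in $\Hil_{\Psi}$ --- only phrased as a direct verification that $\domain{\Gamma_{\Psi}}\subseteq\domain{\Gamma_{\pi_{\Hil_{\Psi}}\Psi}}$ and that $\Gamma_{\pi_{\Hil_{\Psi}}\Psi}f=\Gamma_{\Psi}f$ on that domain.
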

\begin{proof}
 Obviously,  $\domain{\Gamma_{\Psi}}\subseteq \domain{\Gamma_{\pi_{\Hil_{\Psi}}\Psi}}$ by using the property $C_{\pi_{\Hil_{\Psi}}\Psi}=C_{\Psi}\pi_{\Hil_{\Psi}}$ and then $\Gamma_{\pi_{\Hil_{\Psi}}\Psi}f=\Gamma_{\Psi}f$, for all $f\in \domain{\Gamma_{\Psi}}$.
\end{proof}

In the following, we prove that the frame operator of an arbitrary sequence is always closable.
\begin{corollary} \label{cor:65}
Let $\Psi\subseteq \Hil$ a sequence. 
Then $S_{\Psi}|_{\Hil_{\Psi}}$ is closable (with closure $C_{\Psi}^{\times}C_{\Psi}$) and 
\begin{eqnarray}\label{re:*barS}
S_{\Psi}\subseteq \overline{\left(S_{\Psi}|_{\Hil_{\Psi}}\right)}\subseteq \Gamma_{\Psi}\subseteq S_{\Psi}^{\times}.
\end{eqnarray}
\end{corollary}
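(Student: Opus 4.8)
The plan is to route everything through the restricted analysis operator $C_{\Psi}^r = C_{\Psi}|_{\Hil_{\Psi}}$, which by Section \ref{sec:restframop0} is closed and densely defined in $\Hil_{\Psi}$. First I would record the pivot of the whole chain: by \eqref{eq:restrictgamma1} together with Proposition \ref{4propertdens} (6), the operator
$$C_{\Psi}^{\times}C_{\Psi} = \left(C_{\Psi}^r\right)^* C_{\Psi}^r = \Gamma_{\Psi}$$
is densely defined, positive and self-adjoint on $\Hil_{\Psi}$; in particular it is \emph{closed}. This is the candidate closure, so establishing its self-adjointness up front is what makes the two outer inclusions of \eqref{re:*barS} fall out.

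Next I would prove the containment $S_{\Psi}|_{\Hil_{\Psi}} \subseteq \Gamma_{\Psi}$ via the basic form identity. For $f \in \domain{S_{\Psi}} \subseteq \domain{C_{\Psi}}$ and $g \in \domain{C_{\Psi}}$, using $C_{\Psi} = D_{\Psi}^*$ (Proposition \ref{xxlstoeant11} (1)) and Kato's representation of the form $\Omega_{\Psi}$, one has
$$\langle S_{\Psi} f, g\rangle = \langle D_{\Psi} C_{\Psi} f, g\rangle = \langle C_{\Psi} f, C_{\Psi} g\rangle = \Omega_{\Psi}(f,g) = \langle \Gamma_{\Psi} f, g\rangle .$$
Since $\domain{C_{\Psi}}$ is dense in $\Hil_{\Psi}$, this identifies the compression of $S_{\Psi} f$ to $\Hil_{\Psi}$ with $\Gamma_{\Psi} f$, giving $S_{\Psi}|_{\Hil_{\Psi}} \subseteq \Gamma_{\Psi}$. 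Because $\Gamma_{\Psi}$ is closed, any operator contained in it is closable with closure again contained in it; hence $S_{\Psi}|_{\Hil_{\Psi}}$ is closable and $\overline{S_{\Psi}|_{\Hil_{\Psi}}} \subseteq \Gamma_{\Psi} = C_{\Psi}^{\times}C_{\Psi}$. The leftmost link $S_{\Psi} \subseteq \overline{S_{\Psi}|_{\Hil_{\Psi}}}$ is then immediate, an operator always lying in its own closure.

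For the rightmost link $\Gamma_{\Psi} \subseteq S_{\Psi}^{\times}$ I would invoke the canonical adjoint directly. For $g \in \domain{S_{\Psi}}$ and $h \in \domain{\Gamma_{\Psi}} \subseteq \domain{C_{\Psi}}$ the same identity gives $\langle S_{\Psi} g, h\rangle = \langle C_{\Psi} g, C_{\Psi} h\rangle = \langle g, \Gamma_{\Psi} h\rangle$, so $g \mapsto \langle S_{\Psi} g, h\rangle$ is bounded; thus $h \in \domain{S_{\Psi}^{*}}$ and, by the defining property of $S_{\Psi}^{\times} = \left(S_{\Psi}|_{\Hil_{S_{\Psi}}}\right)^*$, we obtain $S_{\Psi}^{\times} h = \Gamma_{\Psi} h$, i.e.\ $\Gamma_{\Psi} \subseteq S_{\Psi}^{\times}$, provided $\Gamma_{\Psi}h$ already lies in $\Hil_{S_{\Psi}} = \overline{\domain{S_{\Psi}}}$.

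The main obstacle is the parenthetical claim that the closure equals $C_{\Psi}^{\times}C_{\Psi}$ rather than merely sitting inside it, and (as the last remark shows) the rightmost inclusion hinges on the same point: both require that $\domain{S_{\Psi}}$ be a core for $\Gamma_{\Psi}$, equivalently that $\overline{\domain{S_{\Psi}}} = \Hil_{\Psi}$. Here I would exploit the factorisation $\Gamma_{\Psi} = \overline{D_{\Psi}^r}\,C_{\Psi}^r$ from \eqref{asso-seq} together with the relations of Proposition \ref{com:=op} (5)--(6), approximating elements of $\domain{\Gamma_{\Psi}}$ in graph norm by elements on which the synthesis series actually converges, so as to upgrade $\overline{S_{\Psi}|_{\Hil_{\Psi}}} \subseteq \Gamma_{\Psi}$ to equality. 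The secondary, purely bookkeeping difficulty that must be tracked throughout is that $S_{\Psi}$ maps into $\Hil$ whereas $\Gamma_{\Psi}$ and $C_{\Psi}^{\times}C_{\Psi}$ map into $\Hil_{\Psi}$, so every identification above has to be read after compression by $\pi_{\Hil_{\Psi}}$, exactly in the spirit of Proposition \ref{com:=op} (6).
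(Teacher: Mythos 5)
You have the right skeleton, and since the paper states this corollary with no proof at all, correctness is the only benchmark. Your pivot is the correct (and surely intended) one: $C_{\Psi}^{\times}C_{\Psi}=\left(C_{\Psi}^r\right)^*C_{\Psi}^r=\Gamma_{\Psi}$ is self-adjoint, hence closed, in $\Hil_{\Psi}$, and the form identity shows that for $f\in\domain{S_{\Psi}}$ and all $g\in\domain{C_{\Psi}}$ one has $\langle S_{\Psi}f,g\rangle=\langle C_{\Psi}f,C_{\Psi}g\rangle$, which yields $\pi_{\Hil_{\Psi}}S_{\Psi}f=\Gamma_{\Psi}f$; closability of the compressed operator and $\overline{\pi_{\Hil_{\Psi}}S_{\Psi}}\subseteq\Gamma_{\Psi}$ then follow exactly as you argue. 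One repair is needed even here: you write $\Omega_{\Psi}(f,g)=\langle\Gamma_{\Psi}f,g\rangle$ before knowing $f\in\domain{\Gamma_{\Psi}}$. Either invoke the maximality clause of Kato's first representation theorem ($\domain{\Gamma_{\Psi}}$ consists precisely of those $f$ in the form domain for which $g\mapsto\Omega_{\Psi}(f,g)$ is bounded), or argue through \eqref{asso-seq}: $C_{\Psi}f\in\domain{D_{\Psi}}\subseteq\domain{\overline{D_{\Psi}^r}}$, so $f\in\domain{\Gamma_{\Psi}}$ and $\Gamma_{\Psi}f=\overline{D_{\Psi}^r}C_{\Psi}f=\pi_{\Hil_{\Psi}}S_{\Psi}f$.

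The three points you defer, however, are not gaps you can close by the means you propose; they are exactly where the statement fails as literally written. First, the compression issue is not bookkeeping. Take $\psi_i=e_i+i^{-1/2}e_0$, $i\geq 1$, in a Hilbert space with orthonormal basis $\{e_0,e_1,e_2,\dots\}$: then $\domain{C_{\Psi}}=\{e_0\}^{\perp}=\Hil_{\Psi}$, the form $\Omega_{\Psi}$ is the inner product of $\Hil_{\Psi}$, so $\Gamma_{\Psi}$ is the identity of $\Hil_{\Psi}$, and yet $S_{\Psi}e_1=\psi_1=e_1+e_0\notin\Hil_{\Psi}$. Since \eqref{re:*barS} as printed forces $S_{\Psi}\subseteq\Gamma_{\Psi}$, it is refuted by this example under any reading of $S_{\Psi}|_{\Hil_{\Psi}}$; the chain survives only with $\pi_{\Hil_{\Psi}}S_{\Psi}$ in place of $S_{\Psi}$, which is a change of statement, not of notation. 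Second, the parenthetical equality $\overline{\left(S_{\Psi}|_{\Hil_{\Psi}}\right)}=C_{\Psi}^{\times}C_{\Psi}$ amounts to $\domain{S_{\Psi}}$ being a core for $\Gamma_{\Psi}$, equivalently (given the middle inclusion and self-adjointness of $\Gamma_{\Psi}$) to essential self-adjointness of the compressed frame operator. No graph-norm approximation through \eqref{Cpisi:Csi} or Proposition \ref{com:=op} can deliver this for an arbitrary sequence: the corollary immediately following this one in the paper lists "$\Gamma_{\Psi}=\overline{\left(S_{\Psi}|_{\Hil_{\Psi}}\right)}$" as a nontrivial condition, equivalent to $S_{\Psi}^{\times}$ being symmetric, which would be vacuous if the equality always held. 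Only the inclusion is available in general. Third, your proviso for the last link, namely $\Gamma_{\Psi}h\in\Hil_{S_{\Psi}}$, i.e.\ $\range{\Gamma_{\Psi}}\subseteq\overline{\domain{S_{\Psi}}}$, is a genuine hypothesis and remains unverified; note that you also cannot bypass it with the paper's rule "$U\subseteq T\Rightarrow T^{\times}\subseteq U^{\times}$", since for operators with non-dense domains that rule itself only gives $\pi_{\Hil_U}T^{\times}\subseteq U^{\times}$. In short: what you actually prove (closability and the middle inclusion, read after compression) is the correct, provable core; the first and third links and the closure equality cannot be obtained by your plan, because for arbitrary sequences they are not true.
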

\begin{corollary}
Let $\Psi$ be  a sequence in $\Hil$. 
The following are equivalent:
\begin{enumerate}
 \item $\Gamma_{\Psi}=\overline{\left({S}_{\Psi}|_{\Hil_{\Psi}}\right)}$;
    \item $\overline{\left({S}_{\Psi}|_{\Hil_{\Psi}}\right)}$ is relaxed self-adjoint;
    \item  $S_{\Psi}^{\times}$ is symmetric; 
    \item $S_{\Psi}^{\times}$ is relaxed self-adjoint;
    \item $\overline{\left({S}_{\Psi}|_{\Hil_{\Psi}}\right)}=S_{\Psi}^{\times}$.
\end{enumerate}
\end{corollary}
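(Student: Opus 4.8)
The plan is to let the single identity $\overline{\left(S_{\Psi}|_{\Hil_{\Psi}}\right)} = S_{\Psi}^{\times\times}$ carry the whole argument, turning each of the five conditions into a statement about the pair $Q := \overline{\left(S_{\Psi}|_{\Hil_{\Psi}}\right)}$ and $R := S_{\Psi}^{\times}$, which will turn out to be canonical adjoints of one another. Throughout I would keep in hand the chain furnished by Corollary \ref{cor:65},
$$Q \subseteq \Gamma_{\Psi} \subseteq R,$$
together with the closability of $S_{\Psi}|_{\Hil_{\Psi}}$ asserted there.

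First I would establish three structural facts. (A) Since $\domain{S_{\Psi}}\subseteq \domain{C_{\Psi}}\subseteq \Hil_{\Psi}$, the operators $S_{\Psi}|_{\Hil_{\Psi}}$ and $S_{\Psi}|_{\Hil_{S_\Psi}}$ share the same graph and the same closed source subspace, so the identity $T^{\times\times}=\overline{T}$ of Section \ref{sec:adjoint1} gives $Q = \left(S_{\Psi}|_{\Hil_{\Psi}}\right)^{\times\times} = S_{\Psi}^{\times\times} = R^{\times}$. (B) Since $R=S_{\Psi}^{\times}$ is the canonical adjoint of a closable (densely defined) operator, it is closed and, by Proposition \ref{4propertdens}(1),(2), densely defined in $\Hil$; hence $R^{\times}=R^{*}$ and $R^{\times\times}=R$, and therefore $Q^{\times}=R^{\times\times}=R$. (C) The generalized frame operator $\Gamma_{\Psi}$ is positive, self-adjoint and densely defined on $\Hil_{\Psi}$, so $\Gamma_{\Psi}^{\times}=\Gamma_{\Psi}$; that is, $\Gamma_{\Psi}$ is relaxed self-adjoint.

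With (A)--(C) in place I would route everything through condition (5), namely $Q=R$. By (B), ``$Q$ relaxed self-adjoint'' reads $Q=Q^{\times}=R$, which is exactly (5), so (2) $\Leftrightarrow$ (5). By (A), ``$R$ relaxed self-adjoint'' reads $R=R^{\times}=Q$, again (5), so (4) $\Leftrightarrow$ (5). For (3) I would invoke the criterion ``$T$ symmetric $\Leftrightarrow T\subseteq T^{\times}$'': symmetry of $R$ says $R\subseteq R^{\times}=Q$, which together with $Q\subseteq R$ from the chain forces $Q=R$, while (5) trivially renders $R$ symmetric; hence (3) $\Leftrightarrow$ (5). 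Finally, for (1): if (5) holds, then $Q\subseteq \Gamma_{\Psi}\subseteq R=Q$ squeezes $\Gamma_{\Psi}=Q$, giving (1); conversely (1) transports the relaxed self-adjointness (C) of $\Gamma_{\Psi}$ onto $Q$, which is (2), hence (5). This closes all equivalences.

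The main obstacle is not the implication web but the careful space-bookkeeping behind (A) and (B): one must verify that forming $S_{\Psi}^{\times}$ by restriction to $\Hil_{S_\Psi}$ agrees with restriction to the possibly larger $\Hil_{\Psi}$, and that the closure $Q$ and the iterated canonical adjoints are genuinely taken between the correct subspaces of $\Hil$. The inclusion $\domain{S_{\Psi}}\subseteq \Hil_{\Psi}$ is precisely what makes these coincide; once it is invoked, the remainder is the standard $T^{\times\times}=\overline{T}$ and $T^{\times\times\times}=T^{\times}$ calculus of Section \ref{sec:adjoint1} applied to the closable operator $S_{\Psi}$.
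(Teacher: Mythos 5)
Your proposal is correct in substance, and it organizes the argument differently from the paper. The paper proves $(1)\Leftrightarrow(2)$ directly (adjoint reversal applied to the inclusion $\overline{\left(S_{\Psi}|_{\Hil_{\Psi}}\right)}\subseteq\Gamma_{\Psi}$ together with self-adjointness of $\Gamma_{\Psi}$ on $\Hil_\Psi$), then settles $(2)\Leftrightarrow(3)\Leftrightarrow(4)$ by the identity $\overline{S_{\Psi}|_{\Hil_{\Psi}}}=S_{\Psi}^{\times\times}$ plus a citation of Kato's Problem V.3.10 (essential self-adjointness versus symmetry of the adjoint), and gets $(4)\Leftrightarrow(5)$ from the chain \eqref{re:*barS}. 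You instead make $(5)$ the hub and derive everything from the mutual adjointness $Q^{\times}=R$, $R^{\times}=Q$ of $Q=\overline{\left(S_{\Psi}|_{\Hil_{\Psi}}\right)}$ and $R=S_{\Psi}^{\times}$, the chain, and the criterion ``symmetric $\Leftrightarrow T\subseteq T^{\times}$''. What this buys is self-containedness: the Kato-type equivalences, which the paper imports for ordinary adjoints and implicitly transplants to canonical adjoints of non-densely-defined operators, drop out of your two identities with no external reference; the paper's proof is shorter only by virtue of that citation. Your squeeze argument for $(5)\Rightarrow(1)$ is also cleaner than the paper's corresponding step.

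Two slips in your facts (A) and (B) should be repaired, though neither breaks the argument. First, $S_{\Psi}$ is in general \emph{not} densely defined --- not in $\Hil$, and $\domain{S_{\Psi}}$ need not even be dense in $\Hil_{\Psi}$ --- so the parenthetical in (B) is wrong as stated; what is true is that $S_{\Psi}|_{\Hil_{S_{\Psi}}}$ is densely defined in $\Hil_{S_{\Psi}}$ by construction and is closable (closability is a property of the graph alone, and Corollary \ref{cor:65} provides the closed extension $\Gamma_{\Psi}$), and then Proposition \ref{4propertdens} applied to this restriction gives that $R$ is densely defined in $\Hil$, closed, with $R^{\times}=R^{*}$ and $R^{\times\times}=\overline{R}=R$. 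Second, in (A) the restrictions $S_{\Psi}|_{\Hil_{\Psi}}$ and $S_{\Psi}|_{\Hil_{S_{\Psi}}}$ do \emph{not} share the same ambient source space --- the inclusion $\Hil_{S_{\Psi}}\subseteq\Hil_{\Psi}$ can be strict; what they share is the same graph, hence the same closure of domain $\Hil_{S_{\Psi}}$, and since the canonical adjoint is by definition the adjoint of the restriction to that closure, this is what forces $\left(S_{\Psi}|_{\Hil_{\Psi}}\right)^{\times}=S_{\Psi}^{\times}$ and hence $Q=S_{\Psi}^{\times\times}=R^{\times}$. With these justifications corrected, your proof stands at the same level of rigor as the paper's (both share its convention of treating operators with values in $\Hil_{\Psi}$ interchangeably as operators into $\Hil_\Psi$ or into $\Hil$ when forming adjoints).
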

\begin{proof}
$(1\Leftrightarrow 2)$ 
Consider $\overline{\left({S}_{\Psi}|_{\Hil_{\Psi}}\right)}=\Gamma_{\Psi}$. Then 
 $$\overline{\left({S}_{\Psi}|_{\Hil_{\Psi}}\right)}^{\times}=\Gamma_{\Psi}^*=\Gamma_{\Psi}=\overline{\left({S}_{\Psi}|_{\Hil_{\Psi}}\right)}.$$
Conversely, 
assume that 
$\overline{\left({S}_{\Psi}|_{\Hil_{\Psi}}\right)}$ is relaxed self-adjoint. Using
$\overline{\left({S}_{\Psi}|_{\Hil_{\Psi}}\right)}\subseteq \Gamma_{\Psi}$ we get $\Gamma_{\Psi}=\Gamma_{\Psi}^*\subseteq \overline{\left({S}_{\Psi}|_{\Hil_{\Psi}}\right)}^{\times}=\overline{\left({S}_{\Psi}|_{\Hil_{\Psi}}\right)}.$
Also,
 from \eqref{re:*barS} it follows that
 $\Gamma_{\Psi}=\overline{\left({S}_{\Psi}|_{\Hil_{\Psi}}\right)}$.  
 $(2\Leftrightarrow 3)$ 
It is  straightforward to see $\overline{S_{\Psi}|_{\Hil_{\Psi}}}=S_{\Psi}^{\times \times}$. The rest is proved in  \cite[Problem V.3.10]{ka95-1}.
 
 and $(3\Leftrightarrow 4)$ are proved in  \cite[Problem V.3.10]{ka95-1}.
 
 $(4\Leftrightarrow 5)$ is followed by  \eqref{re:*barS}.
\end{proof}
\begin{corollary}
   Let $\Psi\subseteq \Hil$ be a sequence. 
   Then
\begin{eqnarray}\label{S:rel}
S_{\Psi}^r\subseteq \overline{S_{\Psi}^r}\subseteq \Gamma_{\Psi}^r\subseteq \left(S_{\Psi}^r\right)^{\times}\end{eqnarray}
\end{corollary}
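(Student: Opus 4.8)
The plan is to recognize \eqref{S:rel} as nothing but Corollary \ref{cor:65} applied to the sequence $\pi_{\Hil_{\Psi}}\Psi$, now regarded as a sequence in the Hilbert space $\Hil_{\Psi}$ rather than in $\Hil$. In that ambient space the frame-related operators of $\pi_{\Hil_{\Psi}}\Psi$ are exactly the restricted operators of $\Psi$: its analysis operator is $C_{\Psi}^r$, its synthesis operator is $D_{\Psi}^r$, its frame operator is $S_{\Psi}^r$, and its generalized frame operator is $\Gamma_{\Psi}^r=(C_{\Psi}^r)^*C_{\Psi}^r$ by \eqref{eq:restrictgamma1}. The decisive simplification is that $C_{\Psi}^r$ is densely defined in $\Hil_{\Psi}$, so the ``$\Hil_T$'' attached to this sequence is all of $\Hil_{\Psi}$; consequently the restriction-to-the-closed-domain appearing in Corollary \ref{cor:65} is here the identity operation, i.e. $S_{\Psi}^r|_{\Hil_{\Psi}}=S_{\Psi}^r$, and the canonical adjoint $(\cdot)^{\times}$ collapses to the ordinary adjoint. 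Thus the chain \eqref{S:rel} is just the instance of \eqref{re:*barS} for $\pi_{\Hil_{\Psi}}\Psi$ in $\Hil_{\Psi}$. To keep the argument self-contained I would nevertheless verify the three inclusions directly, as follows.

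First I would record the leftmost inclusion $S_{\Psi}^r\subseteq\overline{S_{\Psi}^r}$, which is immediate once closability of $S_{\Psi}^r$ is in hand; this is Proposition \ref{com:=op} (3) (equivalently Proposition \ref{xxlstoeant11} (5) applied in $\Hil_{\Psi}$, where $C_{\Psi}^r$ is densely defined). For the middle inclusion $\overline{S_{\Psi}^r}\subseteq\Gamma_{\Psi}^r$ I would use the factorization $S_{\Psi}^r=D_{\Psi}^r C_{\Psi}^r$ together with $D_{\Psi}^r\subseteq\overline{D_{\Psi}^r}=(C_{\Psi}^r)^*$ from \eqref{asso-seq}, giving $S_{\Psi}^r=D_{\Psi}^r C_{\Psi}^r\subseteq(C_{\Psi}^r)^*C_{\Psi}^r=\Gamma_{\Psi}^r$. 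Since $C_{\Psi}^r$ is closed and densely defined, $\Gamma_{\Psi}^r=(C_{\Psi}^r)^*C_{\Psi}^r$ is closed (and self-adjoint) by Proposition \ref{4propertdens} (6); being a closed extension of $S_{\Psi}^r$, it must contain $\overline{S_{\Psi}^r}$.

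For the rightmost inclusion $\Gamma_{\Psi}^r\subseteq(S_{\Psi}^r)^{\times}$ I would take adjoints in the same factorization. From $\overline{D_{\Psi}^r}=(C_{\Psi}^r)^*$ and Proposition \ref{4propertdens} (3) together with the closedness of $C_{\Psi}^r$ one gets $(D_{\Psi}^r)^*=(\overline{D_{\Psi}^r})^*=((C_{\Psi}^r)^*)^*=C_{\Psi}^r$, so that $\Gamma_{\Psi}^r=(C_{\Psi}^r)^*C_{\Psi}^r=(C_{\Psi}^r)^*(D_{\Psi}^r)^*$. Applying the product rule $U^{\times}T^{\times}\subseteq(TU)^{\times}$ (with $T=D_{\Psi}^r$, $U=C_{\Psi}^r$) recorded in Section \ref{sec:adjoint1} then yields $\Gamma_{\Psi}^r=(C_{\Psi}^r)^*(D_{\Psi}^r)^*\subseteq(D_{\Psi}^r C_{\Psi}^r)^{\times}=(S_{\Psi}^r)^{\times}$, the penultimate step again using that $S_{\Psi}^r$ is densely defined in $\Hil_{\Psi}$ so that $(\cdot)^{\times}=(\cdot)^*$.

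I expect the only genuine obstacle to be bookkeeping rather than analysis: one must keep careful track of which Hilbert space each operator and each adjoint lives in, so that the identification of $C_{\Psi}^r,D_{\Psi}^r,S_{\Psi}^r,\Gamma_{\Psi}^r$ with the frame-related operators of $\pi_{\Hil_{\Psi}}\Psi$ in $\Hil_{\Psi}$ is exact and the canonical adjoint genuinely reduces to the ordinary one. The entire analytic content is already packaged in Corollary \ref{cor:65} and in the identity $\Gamma_{\Psi}^r=(C_{\Psi}^r)^*C_{\Psi}^r$; once the spaces are pinned down, \eqref{S:rel} follows with no further estimates.
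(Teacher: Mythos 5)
Your main reduction is exactly the paper's own (implicit) route: the corollary is stated without proof precisely because it is Corollary \ref{cor:65} applied to $\pi_{\Hil_{\Psi}}\Psi$ viewed as a sequence in $\Hil_{\Psi}$, where the frame-related operators become $C_{\Psi}^r$, $D_{\Psi}^r$, $S_{\Psi}^r$ and, by \eqref{eq:restrictgamma1} and \eqref{asso-seq}, the generalized frame operator becomes $\Gamma_{\Psi}^r$. Your three direct verifications, as displayed, are also sound: closability of $S_{\Psi}^r$ gives the first inclusion; $S_{\Psi}^r=D_{\Psi}^r C_{\Psi}^r\subseteq\left(C_{\Psi}^r\right)^*C_{\Psi}^r=\Gamma_{\Psi}^r$ together with closedness and self-adjointness of $\Gamma_{\Psi}^r$ gives the second; and the canonical-adjoint product rule $U^{\times}T^{\times}\subseteq(TU)^{\times}$ from Section 3.1 gives the third.

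However, one recurring justification in your write-up is wrong, and you should excise it: the claim that $S_{\Psi}^r$ is densely defined in $\Hil_{\Psi}$, so that $\left(S_{\Psi}^r\right)^{\times}$ collapses to the ordinary adjoint $\left(S_{\Psi}^r\right)^*$. Dense-definedness of $C_{\Psi}^r$ does give $\overline{\domain{C_{\Psi}^r}}=\Hil_{\Psi}$, and hence the identification $S_{\Psi}^r|_{\Hil_{\Psi}}=S_{\Psi}^r$ needed to match the middle terms of Corollary \ref{cor:65} — that part of your bookkeeping is correct. But the canonical adjoint of $S_{\Psi}^r$ is built from $\overline{\domain{S_{\Psi}^r}}$, not from $\overline{\domain{C_{\Psi}^r}}$; these are different spaces in general, since $\domain{S_{\Psi}^r}=\left\{f\in\domain{C_{\Psi}^r}: C_{\Psi}^r f\in\domain{D_{\Psi}^r}\right\}$ can be drastically smaller than $\domain{C_{\Psi}^r}$. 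Indeed, the paper treats dense-definedness of $S_{\Psi}^r$ as requiring the extra hypothesis that $S_{\Psi}$ be densely defined (the corollary following Proposition \ref{com:=op}), and emphasizes that in general $S_{\Psi}$ need not be densely defined or closable; the whole purpose of $\Gamma_{\Psi}$ is to have a well-behaved object when $S_{\Psi}$ is not. Your proof survives because the erroneous claim is never load-bearing: the statement to be proved, the conclusion of Corollary \ref{cor:65}, and your displayed chain all carry the canonical adjoint $(\cdot)^{\times}$, and the product rule you cite holds for canonical adjoints with no dense-definedness hypothesis. If instead you had really meant to route through Kato's rule $U^*T^*\subseteq(TU)^*$ — which does require the composition $TU=S_{\Psi}^r$ to be densely defined — that step would be a genuine gap. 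Keep the $\times$-calculus throughout and drop every appeal to dense-definedness of $S_{\Psi}^r$, and the argument is complete and matches the paper.
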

 Applying \eqref{eq:restrictgamma1} to \eqref{re:*barS}
and \eqref{S:rel} we also get:
$$S_{\Psi}\subseteq \overline{\left(S_{\Psi}|_{\Hil_{\Psi}}\right)}\subseteq \Gamma_{\Psi}\subseteq \left(S_{\Psi}^r\right)^{\times}, \quad S_{\Psi}^r\subseteq \overline{S_{\Psi}^r}\subseteq \Gamma_{\Psi}^r\subseteq (S_{\Psi})^{\times}.$$

Let us remind the reader to an important property for generalized frame operators of  lower frame sequences.
\begin{lemma}\cite[Proposition 3.3]{Corso2019GeneralizedFO}\label{invertgeneral}
Let $\Psi$ be a sequence in $\Hil$. The following are equivalent:
\begin{enumerate}
    \item $\Psi$ is a lower frame sequence with lower frame bound $A$.
    \item $\Gamma_{\Psi}$ is bounded from below by $A$ (i.e. $\|\Gamma_{\Psi}f\|\geq A\|f\|$) for all $f\in \domain{C_{\Psi}}.$
    \item $\Gamma_{\Psi}$ is invertible and $\Gamma_{\Psi}^{-1}\in \BL{\Hil_{\Psi}}$ with $\|\Gamma_{\Psi}^{-1}\|\leq  A^{-1}$. (In particular, $\domain{\Gamma_{\Psi}^{-1}}=\range{\Gamma_{\Psi}}=\Hil_{\psi}$.)
    \end{enumerate}
\end{lemma}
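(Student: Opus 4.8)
The plan is to run the cycle $(1)\Rightarrow(2)\Rightarrow(3)\Rightarrow(1)$, using throughout that $\Gamma_{\Psi}$ is, by the Kato representation recalled just above, a positive self-adjoint (hence closed and densely defined) operator on $\Hil_{\Psi}$ with $\domain{\Gamma_{\Psi}^{1/2}}=\domain{C_{\Psi}}$. Two identities carry most of the argument: for $f\in\domain{C_{\Psi}}=\domain{\Gamma_{\Psi}^{1/2}}$ one has $\|\Gamma_{\Psi}^{1/2}f\|^2=\Omega_{\Psi}(f,f)=\|C_{\Psi}f\|^2$, and for $f\in\domain{\Gamma_{\Psi}}\subseteq\domain{C_{\Psi}}$ one has $\langle\Gamma_{\Psi}f,f\rangle=\Omega_{\Psi}(f,f)=\|C_{\Psi}f\|^2$. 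Keeping careful track of the two domains $\domain{\Gamma_{\Psi}}$ and $\domain{C_{\Psi}}$ is essential, since condition $(2)$ naturally lives on the former while the lower frame inequality lives on the latter.

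For $(1)\Rightarrow(2)$ I would combine the second identity with Cauchy--Schwarz: for $f\in\domain{\Gamma_{\Psi}}$ we get $A\|f\|^2\le\|C_{\Psi}f\|^2=\langle\Gamma_{\Psi}f,f\rangle\le\|\Gamma_{\Psi}f\|\,\|f\|$, whence $A\|f\|\le\|\Gamma_{\Psi}f\|$, i.e. $\Gamma_{\Psi}$ is (bb) with bound $A$. For $(2)\Rightarrow(3)$ I use that $\Gamma_{\Psi}$ is self-adjoint, hence closed, so Proposition \ref{prop:injcloran1} turns (bb) into ``injective with closed range''. Self-adjointness then gives $\range{\Gamma_{\Psi}}^{\perp}=\kernel{\Gamma_{\Psi}^{*}}=\kernel{\Gamma_{\Psi}}=\{0\}$ by Proposition \ref{4propertdens}(4), so the (closed) range is dense and therefore equals $\Hil_{\Psi}$. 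Thus $\Gamma_{\Psi}$ is a closed bijection of $\Hil_{\Psi}$ and is (BI) by Proposition \ref{sec:boundinvertunb1}; the bound $\|\Gamma_{\Psi}^{-1}\|\le A^{-1}$ and the equalities $\domain{\Gamma_{\Psi}^{-1}}=\range{\Gamma_{\Psi}}=\Hil_{\Psi}$ are then read off directly from (bb) by setting $g=\Gamma_{\Psi}f$.

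For $(3)\Rightarrow(1)$, the cleanest route---and the one that avoids any core/approximation argument passing from $\domain{\Gamma_{\Psi}}$ to $\domain{C_{\Psi}}$---is through the square root. Since $\Gamma_{\Psi}^{-1}$ is bounded, positive and self-adjoint with $\|\Gamma_{\Psi}^{-1}\|\le A^{-1}$, functional calculus gives a bounded positive $\Gamma_{\Psi}^{-1/2}$ with $\|\Gamma_{\Psi}^{-1/2}\|\le A^{-1/2}$ that inverts $\Gamma_{\Psi}^{1/2}$ on $\domain{\Gamma_{\Psi}^{1/2}}=\domain{C_{\Psi}}$. Hence, for every $f\in\domain{C_{\Psi}}$, $\|f\|=\|\Gamma_{\Psi}^{-1/2}\Gamma_{\Psi}^{1/2}f\|\le A^{-1/2}\|\Gamma_{\Psi}^{1/2}f\|=A^{-1/2}\|C_{\Psi}f\|$, which squares to the lower frame inequality $A\|f\|^2\le\|C_{\Psi}f\|^2$ with bound $A$.

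I expect the main obstacle to be bookkeeping rather than depth: reconciling the three different domains ($\domain{\Gamma_{\Psi}}$, $\domain{\Gamma_{\Psi}^{1/2}}=\domain{C_{\Psi}}$, and $\Hil_{\Psi}$) while preserving the \emph{same} constant $A$ across all three formulations. In particular the backward implication $(3)\Rightarrow(1)$ is the delicate one, since it must transport the lower bound from the operator $\Gamma_{\Psi}$ (a priori only controlled on $\domain{\Gamma_{\Psi}}$) back to the form $\Omega_{\Psi}$ on all of $\domain{C_{\Psi}}$; routing it through the bounded square root $\Gamma_{\Psi}^{-1/2}$ is exactly what removes the need for a density-in-form-norm argument. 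If one preferred to avoid the square root entirely, $(2)\Leftrightarrow(1)$ could instead be established via the spectral theorem, observing that for the positive self-adjoint $\Gamma_{\Psi}$ both ``$\langle\Gamma_{\Psi}f,f\rangle\ge A\|f\|^2$'' and ``$\|\Gamma_{\Psi}f\|\ge A\|f\|$'' are equivalent to $\sigma(\Gamma_{\Psi})\subseteq[A,\infty)$.
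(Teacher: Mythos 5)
The paper contains no proof of this lemma to compare against: it is imported verbatim, with citation, from \cite[Proposition 3.3]{Corso2019GeneralizedFO} and used as a black box (``Let us remind the reader\dots''). So your proposal can only be judged on its own merits, and on those it is correct and complete relative to the facts the paper records in Section~\ref{sec:genframop1}. The Kato representation gives $\|\Gamma_{\Psi}^{1/2}f\|^{2}=\Omega_{\Psi}(f,f)=\|C_{\Psi}f\|^{2}$ on $\domain{\Gamma_{\Psi}^{1/2}}=\domain{C_{\Psi}}$ and $\langle\Gamma_{\Psi}f,f\rangle=\|C_{\Psi}f\|^{2}$ on $\domain{\Gamma_{\Psi}}$; Cauchy--Schwarz then yields $(1)\Rightarrow(2)$; self-adjointness (hence closedness) of $\Gamma_{\Psi}$ together with Proposition~\ref{prop:injcloran1}, Proposition~\ref{4propertdens}~(4) and Proposition~\ref{sec:boundinvertunb1} yields $(2)\Rightarrow(3)$; and the bounded inverse square root $(\Gamma_{\Psi}^{-1})^{1/2}=(\Gamma_{\Psi}^{1/2})^{-1}$ yields $(3)\Rightarrow(1)$, with the constant $A$ tracked correctly throughout (the quadratic scaling between the frame bound for $C_\Psi$ and the lower bound for $\Gamma_{\Psi}$ is handled right). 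Two further points in your favour: you correctly read item (2) on $\domain{\Gamma_{\Psi}}$ --- as literally written, ``for all $f\in\domain{C_{\Psi}}$'' is ill-posed, since $\Gamma_{\Psi}f$ is undefined off $\domain{\Gamma_{\Psi}}$ --- and that reading suffices to close the cycle; and routing $(3)\Rightarrow(1)$ through the square root is exactly what lets the lower bound reach all of the form domain $\domain{C_{\Psi}}$ rather than only the operator domain, which is the genuinely delicate point and which you identified explicitly.
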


 It can be easily seen that if $\Psi$ is a lower frame sequence,  $D_{\Psi}$ is closable and $\domain{\Gamma_{\Psi}}$ is  closed, then $\Gamma_{\Psi}$ is a bounded operator (by \cite[Theorem 5.5.6]{weidm80}). 
So, 
$$\Psi=\Gamma_{\Psi}\Gamma_{\Psi}^{-1}\Psi$$
 is a Bessel sequence in $\Hil_{\Psi}$ and in this case, it  is a frame for $\Hil_{\Psi}$.

Following the main topic of this manuscript we introduce more properties of the generalized frame operator of a sequence, based on the different notions of invertibility.
\begin{lemma}\label{surj=inj}
Let $\Psi$ be a sequence in $\Hil$. 
The following are equivalent:
\begin{enumerate}
    \item $\Gamma_{\Psi}$ is (bb).
    \item $\Gamma_{\Psi}$ is surjective onto $\Hil_{\Psi}$.
    \item $\Gamma_{\Psi}$ is injective and has closed range.
    \item $\Gamma_{\Psi}$ is (BI) (i.e.  $\Gamma_{\Psi}^{-1}\in \BL{\Hil_{\Psi}}$).
    \end{enumerate}
\end{lemma}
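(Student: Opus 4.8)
Looking at this lemma, I need to prove the equivalence of four statements about the generalized frame operator $\Gamma_\Psi$, which is positive and self-adjoint on $\Hil_\Psi$.

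The plan is to exploit the fact that $\Gamma_\Psi$ is a positive, self-adjoint (hence closed and densely defined) operator on $\Hil_\Psi$, which places us squarely in the setting where the general invertibility results of Section 3 apply cleanly. My strategy is to route everything through the equivalences already established for closed operators, rather than arguing each implication from scratch.

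First I would observe that $\Gamma_\Psi = C_\Psi^\times C_\Psi = (C_\Psi^r)^* C_\Psi^r$ is self-adjoint on $\Hil_\Psi$ by Kato's representation theorem together with Proposition \ref{4propertdens} (6), and in particular it is closed and densely defined on $\Hil_\Psi$. This is the key structural input. With closedness in hand, Proposition \ref{prop:injcloran1} gives immediately that $\Gamma_\Psi$ is (bb) if and only if it is injective and has closed range, which is the equivalence $(1)\Leftrightarrow(3)$. For $(3)\Leftrightarrow(4)$, I would invoke Corollary \ref{cor:BIR}: since $\Gamma_\Psi$ is already known to be closed, the conditions ``closed, closed range, $\kernel{\Gamma_\Psi}=\{0\}$'' reduce exactly to (3), and Corollary \ref{cor:BIR} identifies these with (BIR); but because $\Gamma_\Psi$ is self-adjoint, its range is $\overline{\range{\Gamma_\Psi}} = \kernel{\Gamma_\Psi}^\perp = \Hil_\Psi$ once it is injective with closed range, so (BIR) upgrades to (BI). This surjectivity onto all of $\Hil_\Psi$ is precisely what makes $(4)$ hold and simultaneously delivers $(2)$.

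For the remaining link involving surjectivity, the cleanest route is Proposition \ref{eq:surj} applied to the self-adjoint operator: since $\Gamma_\Psi^\times = \Gamma_\Psi$ (relaxed self-adjointness of a self-adjoint positive operator), the statements ``$\Gamma_\Psi^\times$ surjective'' and ``$\Gamma_\Psi^{\times\times}$ is (bb)'' collapse to ``$\Gamma_\Psi$ surjective'' and ``$\Gamma_\Psi$ is (bb)'' being equivalent, yielding $(1)\Leftrightarrow(2)$ directly. Assembling $(1)\Leftrightarrow(2)\Leftrightarrow(3)$ and then $(3)\Leftrightarrow(4)$ via Corollary \ref{eq:BI} (whose fourth condition, ``closed, $\range{\Gamma_\Psi}=\Hil_\Psi$, $\kernel{\Gamma_\Psi}=\{0\}$'', matches (2)+(3)) closes the cycle.

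The main obstacle I anticipate is not the logical scaffolding but making sure the self-adjointness is used correctly to convert ``closed range'' plus ``injective'' into genuine surjectivity onto $\Hil_\Psi$ --- the identity $\overline{\range{\Gamma_\Psi}} = \kernel{\Gamma_\Psi^*}^\perp = \kernel{\Gamma_\Psi}^\perp$ from Proposition \ref{4propertdens} (4) is what forces density of the range, and closedness of the range then forces equality with $\Hil_\Psi$. I would take care to state that all operators here act on $\Hil_\Psi$ (not $\Hil$), so that ``surjective'' unambiguously means onto $\Hil_\Psi$, matching the normalization in Lemma \ref{invertgeneral}.
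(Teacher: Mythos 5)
Your proposal is correct and follows essentially the same route as the paper: both rest on the observation that $\Gamma_{\Psi}=\left(C_{\Psi}^r\right)^*C_{\Psi}^r$ is closed, densely defined and self-adjoint on $\Hil_{\Psi}$ (Proposition \ref{4propertdens} (6)), then chain Proposition \ref{eq:surj} for $(1)\Leftrightarrow(2)$, Proposition \ref{prop:injcloran1} for $(1)\Leftrightarrow(3)$, and Corollary \ref{eq:BI} for $(4)$. Your extra detour through Corollary \ref{cor:BIR} and the identity $\overline{\range{\Gamma_{\Psi}}}=\kernel{\Gamma_{\Psi}}^{\perp}$ is sound but redundant, since $(1)\Leftrightarrow(2)$ already delivers the surjectivity you need.
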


\begin{proof}
The operators $C_{\Psi}$ and $C_{\Psi}^r$ are closed and densely defined operators and so $\Gamma_{\Psi}= \left(C_{\Psi}^r\right)^*C_{\Psi}^r$ is a closed, densely defined and self-adjoint operator on $\Hil_{\Psi}$ by Proposition \ref{4propertdens} (6). 
Then it is 
surjective on $\Hil_{\Psi}$ if and only if it is (bb) (by Proposition \ref{eq:surj}) if and only if  is injective and has closed range (by Proposition \ref{prop:injcloran1}).
Therefore by Corollary \ref{eq:BI}, $\Gamma_\Psi$  is surjective if and only if it is (BI) and is equivalent to $(1)$ and $(2)$. 
\end{proof}
In \cite{Corso2019GeneralizedFO}, it is stated the sequence $\Gamma_{\Psi}^{-1}\pi_{\Hil_{\Psi}}\Psi \in \Hil_{\Psi}$ is a Bessel sequence, whenever $\Psi$ is a lower frame sequence in $\Hil$. Furthermore, this was used to extend the concept of  the  canonical dual of any lower frame sequence: The sequence $\Gamma_{\Psi}^{-1}\pi_{\Hil_{\Psi}}\Psi$ is called the \emph{the canonical dual } of a lower frame sequence  $\Psi$ and denoted by $\wiSi$ \cite{Corso2019GeneralizedFO}. It fulfils $f = \sum \limits_{k \in I}  \left< f, \pi_{\Hil_{\Psi}} \psi_k \right> \tilde \psi_k$ for all $f \in \domain{C_\Psi}$. 
In particular, it is 
easy to see that if $\Psi$ is  a lower frame sequence in $\Hil$ such that $D_{\Psi}$ is a closable operator, then $\wiSi$ is $\Gamma_{\Psi}^{-1}\Psi$. 

It is crucial to know a formula for the inverse of the generalized frame operator of a lower frame sequence to present a reconstruction formula. 
\begin{proposition}\label{rec:form}
Let $\Psi$ be a lower frame sequence. Then $\Gamma_{\Psi}$ is invertible on $\Hil_{\Psi}$ and 
\begin{eqnarray}\label{invTgeneral}
\Gamma_{\Psi}^{-1}\pi_{\Hil_{\Psi}}f=\sum_{i\in I} \left< f, \Gamma_{\Psi}^{-1}\pi_{\Hil_{\Psi}}\psi_i\right>\Gamma_{\Psi}^{-1}\pi_{\Hil_{\Psi}}\psi_i, \qquad (f\in \Hil)
\end{eqnarray}
with unconditional convergent.
\end{proposition}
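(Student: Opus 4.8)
The plan is to read off the invertibility statement directly from Lemma \ref{invertgeneral} and then to obtain the reconstruction formula \eqref{invTgeneral} by feeding one carefully chosen vector into the canonical dual expansion
$$ f = \sum_{k\in I}\left\langle f,\pi_{\Hil_{\Psi}}\psi_k\right\rangle\wisi_k, \qquad (f\in\domain{C_{\Psi}}), $$
which was recalled just before the statement. Since $\Psi$ is a lower frame sequence, Lemma \ref{invertgeneral} gives that $\Gamma_{\Psi}$ is invertible on $\Hil_{\Psi}$ with $\Gamma_{\Psi}^{-1}\in\BL{\Hil_{\Psi}}$ self-adjoint and $\domain{\Gamma_{\Psi}^{-1}}=\range{\Gamma_{\Psi}}=\Hil_{\Psi}$. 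The observation that makes the expansion applicable is that $\range{\Gamma_{\Psi}^{-1}}=\domain{\Gamma_{\Psi}}\subseteq\domain{C_{\Psi}}$, where the inclusion comes from Kato's second representation theorem (since $\domain{\Gamma_{\Psi}}\subseteq\domain{\Gamma_{\Psi}^{1/2}}=\domain{C_{\Psi}}$). Hence for any $f\in\Hil$ the vector $h:=\Gamma_{\Psi}^{-1}\pi_{\Hil_{\Psi}}f$ lies in $\domain{C_{\Psi}}$.

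Next I would apply the canonical dual expansion to this $h$, obtaining $h=\sum_k\langle h,\pi_{\Hil_{\Psi}}\psi_k\rangle\wisi_k$, and then rewrite the coefficients. Using that $\Gamma_{\Psi}^{-1}$ and $\pi_{\Hil_{\Psi}}$ are self-adjoint and that each $\wisi_k=\Gamma_{\Psi}^{-1}\pi_{\Hil_{\Psi}}\psi_k\in\Hil_{\Psi}$, one computes
$$ \left\langle h,\pi_{\Hil_{\Psi}}\psi_k\right\rangle=\left\langle\Gamma_{\Psi}^{-1}\pi_{\Hil_{\Psi}}f,\pi_{\Hil_{\Psi}}\psi_k\right\rangle=\left\langle\pi_{\Hil_{\Psi}}f,\Gamma_{\Psi}^{-1}\pi_{\Hil_{\Psi}}\psi_k\right\rangle=\left\langle f,\wisi_k\right\rangle, $$
where the final equality uses $\wisi_k\in\Hil_{\Psi}$. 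Substituting back, $\Gamma_{\Psi}^{-1}\pi_{\Hil_{\Psi}}f=h=\sum_k\langle f,\wisi_k\rangle\wisi_k$, which is exactly \eqref{invTgeneral} after writing $\wisi_k=\Gamma_{\Psi}^{-1}\pi_{\Hil_{\Psi}}\psi_k$.

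For the unconditional convergence I would invoke that $\wiSi=\Gamma_{\Psi}^{-1}\pi_{\Hil_{\Psi}}\Psi$ is a Bessel sequence in $\Hil_{\Psi}$. Consequently $\{\langle f,\wisi_k\rangle\}_{k}\in\ell^2$, and for a Bessel sequence the synthesis series of any $\ell^2$ coefficient sequence converges unconditionally; in particular the right-hand side of \eqref{invTgeneral} converges unconditionally.

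The main obstacle is the domain bookkeeping, namely justifying $\Gamma_{\Psi}^{-1}\pi_{\Hil_{\Psi}}f\in\domain{C_{\Psi}}$ so that the canonical dual expansion is legitimately applicable; once this is settled the rest is a routine inner-product manipulation using self-adjointness of $\Gamma_{\Psi}^{-1}$ on $\Hil_{\Psi}$. An equivalent route is to note that \eqref{invTgeneral} asserts precisely that the frame operator of the Bessel sequence $\wiSi$ on $\Hil_{\Psi}$ agrees with $\Gamma_{\Psi}^{-1}$, verifiable on the dense subspace $\domain{C_{\Psi}}$ and extended by continuity; but the expansion argument above is the cleaner one.
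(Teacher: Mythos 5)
Your proof is correct, but it takes a genuinely different route from the paper's. The paper argues \emph{weakly}: for arbitrary $h\in\Hil$ it computes $\left< h,\Gamma_{\Psi}^{-1}\pi_{\Hil_{\Psi}}f\right>$ starting from $\pi_{\Hil_{\Psi}}h=\Gamma_{\Psi}\Gamma_{\Psi}^{-1}\pi_{\Hil_{\Psi}}h$ and the defining identity of the sesquilinear form, $\left<\Gamma_{\Psi}u,v\right>=\sum_{i\in I}\left<u,\psi_i\right>\left<\psi_i,v\right>$ for $u\in\domain{\Gamma_{\Psi}}$, $v\in\domain{C_{\Psi}}$ (applied with $u=\Gamma_{\Psi}^{-1}\pi_{\Hil_{\Psi}}h$, $v=\Gamma_{\Psi}^{-1}\pi_{\Hil_{\Psi}}f$), then shifts $\Gamma_{\Psi}^{-1}$ across the inner products and concludes that the two vectors coincide because the series $\sum_{i\in I}\left<f,\wisi_i\right>\wisi_i$ converges and has the same inner product with every $h$. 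You instead use the canonical-dual reconstruction formula $h=\sum_{i\in I}\left<h,\pi_{\Hil_{\Psi}}\psi_i\right>\wisi_i$ for $h\in\domain{C_{\Psi}}$ (Corso's result, quoted in the paper just before the proposition and restated as \eqref{reconstbyT}) as a black box, evaluate it at the single vector $h=\Gamma_{\Psi}^{-1}\pi_{\Hil_{\Psi}}f$, and rewrite the coefficients via self-adjointness of $\Gamma_{\Psi}^{-1}$ on $\Hil_{\Psi}$; your domain bookkeeping $\Gamma_{\Psi}^{-1}\pi_{\Hil_{\Psi}}f\in\domain{\Gamma_{\Psi}}\subseteq\domain{C_{\Psi}}$ is precisely the observation with which the paper opens its own proof, so this application is legitimate. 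Your route is shorter and makes the structural content transparent --- \eqref{invTgeneral} is nothing but the dual expansion evaluated at $\Gamma_{\Psi}^{-1}\pi_{\Hil_{\Psi}}f$ --- at the price of relying on the external reconstruction theorem, whereas the paper's computation is self-contained given only the form representation of $\Gamma_{\Psi}$ and the convergence of the series. You are also more explicit than the paper about unconditionality, deriving it from the Bessel property of $\wiSi$ in $\Hil_{\Psi}$ together with $\left\{\left<f,\wisi_i\right>\right\}_{i\in I}=C_{\wiSi}\left(\pi_{\Hil_{\Psi}}f\right)\in\ell^2$, which is a welcome addition since the paper's proof leaves this point implicit.
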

\begin{proof}
 $\Gamma_{\Psi}^{-1}\pi_{\Hil_{\Psi}}f\in \domain{\Gamma_{\Psi}}\subseteq \domain{C_{\Psi}}$, then for all $h\in \Hil$ we have
\begin{eqnarray*}
\left< h, \Gamma_{\Psi}^{-1}\pi_{\Hil_{\Psi}}f\right>&=& \left< \pi_{\Hil_{\Psi}}h, \Gamma_{\Psi}^{-1}\pi_{\Hil_{\Psi}}f\right>\\
&=& \left< \Gamma_{\Psi}\Gamma_{\Psi}^{-1}\pi_{\Hil_{\Psi}}h, \Gamma_{\Psi}^{-1}\pi_{\Hil_{\Psi}}f\right>\\
&=& \sum_{i\in I} \left< \Gamma_{\Psi}^{-1}\pi_{\Hil_{\Psi}}h,\psi_i\right> \left< \psi_i, \Gamma_{\Psi}^{-1}\pi_{\Hil_{\Psi}}f\right>\\
&=& \sum_{i\in I} \left< \pi_{\Hil_{\Psi}}h,\Gamma_{\Psi}^{-1}\pi_{\Hil_{\Psi}}\psi_i\right> \left< \Gamma_{\Psi}^{-1}\pi_{\Hil_{\Psi}}\psi_i, f\right>\\
&=& \sum_{i\in I} \left< h,\Gamma_{\Psi}^{-1}\pi_{\Hil_{\Psi}}\psi_i\right> \left< \Gamma_{\Psi}^{-1}\pi_{\Hil_{\Psi}}\psi_i, f\right>= \left< h, \sum_{i\in I} \left< f, \Gamma_{\Psi}^{-1}\pi_{\Hil_{\Psi}}\psi_i\right>\Gamma_{\Psi}^{-1}\pi_{\Hil_{\Psi}}\psi_i\right>.
\end{eqnarray*}
By the convergence of $\sum_{i\in I}\left<f,\wisi_i\right> \wisi_i$, for all $f\in \Hil$, we conclude
\begin{eqnarray*}
\Gamma_{\Psi}^{-1}\pi_{\Hil_{\Psi}}f=\sum_{i\in I} \left< f, \Gamma_{\Psi}^{-1}\pi_{\Hil_{\Psi}}\psi_i\right>\Gamma_{\Psi}^{-1}\pi_{\Hil_{\Psi}}\psi_i, \qquad (f\in \Hil).
\end{eqnarray*}
\end{proof}
Using \cite[Theorem 4.1]{Corso2019GeneralizedFO} we have
\begin{eqnarray}\label{reconstbyT}
f=\sum_{i\in I} \left< f, \pi_{\Hil_{\Psi}}\psi_i\right>\wisi_i, \qquad (f\in \domain{C_{\Psi}}).
\end{eqnarray}

We will now ask ourselves the question what happens for the other duality direction:
\begin{corollary}\label{iffgama}  
Let $\Psi$ be a lower frame sequence in $\Hil$. Then 
\begin{eqnarray}\label{symdual}
f=\sum_{i\in I}\left< f, \Gamma_{\Psi}^{-1}\pi_{\Hil_{\Psi}}\psi_i\right> \pi_{\Hil_{\Psi}}\psi_i, \qquad (f\in \domain{\Gamma_{\Psi}}),
\end{eqnarray}
if and only if 
$C_{\Psi}(\Gamma_{\Psi}^{-1}f)\in \domain{D_{\Psi}^r}$.

\end{corollary}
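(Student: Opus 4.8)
The plan is to funnel both sides of \eqref{symdual} through the single vector $h := \Gamma_{\Psi}^{-1}f$ and to reduce the statement to the distinction between the (non-closed) restricted synthesis operator $D_{\Psi}^r$ and its closure $\overline{D_{\Psi}^r}$. First I would rewrite the coefficients. For $f \in \domain{\Gamma_{\Psi}} \subseteq \Hil_{\Psi}$ the inverse $\Gamma_{\Psi}^{-1}$ is a bounded, positive, self-adjoint operator on $\Hil_{\Psi}$ by Lemma \ref{invertgeneral}, and since both $f$ and $\wisi_i = \Gamma_{\Psi}^{-1}\pi_{\Hil_{\Psi}}\psi_i$ lie in $\Hil_{\Psi}$, self-adjointness gives $\langle f, \Gamma_{\Psi}^{-1}\pi_{\Hil_{\Psi}}\psi_i\rangle = \langle \Gamma_{\Psi}^{-1}f, \pi_{\Hil_{\Psi}}\psi_i\rangle = \langle \Gamma_{\Psi}^{-1}f, \psi_i\rangle$, the last step because $\Gamma_{\Psi}^{-1}f \perp \Hil_{\Psi}^{\perp}$. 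Writing $c := C_{\Psi}(\Gamma_{\Psi}^{-1}f)$, this identifies the coefficient in \eqref{symdual} as $c_i$, so the right-hand side of \eqref{symdual} is precisely $\sum_{i\in I} c_i \pi_{\Hil_{\Psi}}\psi_i = D_{\Psi}^r c$ whenever this sum converges.

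The key computation is then to show $\overline{D_{\Psi}^r}\,c = f$ for the closure, unconditionally. Combining \eqref{eq:restrictgamma1} and \eqref{asso-seq} yields the factorization $\Gamma_{\Psi} = (C_{\Psi}^r)^* C_{\Psi}^r = \overline{D_{\Psi}^r}\,C_{\Psi}^r$. Setting $h := \Gamma_{\Psi}^{-1}f$, Lemma \ref{invertgeneral} guarantees $h \in \domain{\Gamma_{\Psi}} \subseteq \domain{C_{\Psi}^r}$ with $\Gamma_{\Psi}h = f$, and $C_{\Psi}^r h = C_{\Psi}(\Gamma_{\Psi}^{-1}f) = c$. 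Applying the factorization at $h$ forces $C_{\Psi}^r h \in \domain{\overline{D_{\Psi}^r}}$ and $\overline{D_{\Psi}^r}\,c = \overline{D_{\Psi}^r}(C_{\Psi}^r h) = \Gamma_{\Psi}h = f$. Thus $c$ always lies in the domain of the closure and is mapped to $f$ there.

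With this in hand the equivalence follows from $D_{\Psi}^r \subseteq \overline{D_{\Psi}^r}$, which is Proposition \ref{com:=op} (5). If $c = C_{\Psi}(\Gamma_{\Psi}^{-1}f) \in \domain{D_{\Psi}^r}$, then $D_{\Psi}^r c = \overline{D_{\Psi}^r}\,c = f$, which is exactly \eqref{symdual}. Conversely, if \eqref{symdual} holds then $\sum_{i\in I} c_i \pi_{\Hil_{\Psi}}\psi_i$ converges in $\Hil_{\Psi}$, so $c \in \domain{D_{\Psi}^r}$ by the very definition of the restricted synthesis domain.

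The main obstacle I anticipate is conceptual rather than computational: one must carefully separate the closed operator $\overline{D_{\Psi}^r}$ — for which $\overline{D_{\Psi}^r}\,c = f$ holds automatically through the factorization of $\Gamma_{\Psi}$ — from the genuinely non-closed $D_{\Psi}^r$, whose domain is the convergence condition being characterized. The content of the statement is precisely that \eqref{symdual} cannot be taken for granted for an arbitrary lower frame sequence; it holds exactly when the canonical-dual coefficient sequence $c$ lands in $\domain{D_{\Psi}^r}$ and not merely in $\domain{\overline{D_{\Psi}^r}}$. Some care is also needed in the bookkeeping of $\Hil$- versus $\Hil_{\Psi}$-inner products and in verifying $C_{\Psi}^r h = C_{\Psi}h$ on the relevant domain, but these are routine once $h \in \Hil_{\Psi}$ is used.
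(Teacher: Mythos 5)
Your proof is correct, and for the nontrivial implication it takes a genuinely different route than the paper. The paper proves the direction ``$C_{\Psi}(\Gamma_{\Psi}^{-1}f)\in \domain{D_{\Psi}^r}$ implies \eqref{symdual}'' by invoking the reconstruction formula \eqref{invTgeneral} of Proposition \ref{rec:form}, writing $f=\Gamma_{\Psi}\sum_{i\in I}\left< f,\wisi_i\right>\wisi_i$ and then pulling $\Gamma_{\Psi}$ inside the series ``because by assumption the right hand side is well-defined'' --- a limit interchange that tacitly rests on the closedness of $\Gamma_{\Psi}$ (the partial sums converge to $\Gamma_{\Psi}^{-1}f$, their $\Gamma_{\Psi}$-images converge by hypothesis, and closedness identifies the limits). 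You instead use the factorization $\Gamma_{\Psi}=\left(C_{\Psi}^r\right)^*C_{\Psi}^r=\overline{D_{\Psi}^r}\,C_{\Psi}^r$ obtained by combining \eqref{eq:restrictgamma1} with \eqref{asso-seq}: evaluating at $h=\Gamma_{\Psi}^{-1}f\in\domain{\Gamma_{\Psi}}$ shows, by the very definition of the domain of a composition, that $c=C_{\Psi}^r h=C_{\Psi}(\Gamma_{\Psi}^{-1}f)$ always lies in $\domain{\overline{D_{\Psi}^r}}$ with $\overline{D_{\Psi}^r}c=f$, after which $D_{\Psi}^r\subseteq\overline{D_{\Psi}^r}$ gives $D_{\Psi}^rc=f$ whenever $c\in\domain{D_{\Psi}^r}$. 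What your route buys is that the delicate interchange is absorbed into rigorous domain bookkeeping for the product operator, and it isolates the precise content of the corollary: the coefficient sequence is always in the domain of the closure and is mapped to $f$ there, so \eqref{symdual} is purely a question of membership in the domain of the non-closed restricted synthesis operator. What the paper's route buys is brevity given Proposition \ref{rec:form}, exhibiting \eqref{symdual} directly as the $\Gamma_{\Psi}$-image of the unconditionally convergent dual expansion. The remaining pieces coincide: the easy direction (convergence of the sum in \eqref{symdual} forces $c\in\domain{D_{\Psi}^r}$) is identical, and your explicit coefficient identification $\left< f,\Gamma_{\Psi}^{-1}\pi_{\Hil_{\Psi}}\psi_i\right>=\left<\Gamma_{\Psi}^{-1}f,\psi_i\right>$, justified via self-adjointness and boundedness of $\Gamma_{\Psi}^{-1}$ on $\Hil_{\Psi}$ from Lemma \ref{invertgeneral}, is exactly what the paper uses implicitly when it rewrites the sum as $D_{\Psi}^rC_{\Psi}^r(\Gamma_{\Psi}^{-1}f)$.
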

\begin{proof}
Suppose \eqref{symdual} is fulfilled. 
Since $\Gamma_{\Psi}^{-1}f\in \domain{\Gamma_\Psi}$, for all $f\in \Hil_{\Psi}$, then by assumption $\sum_{i\in I}\left< f, \Gamma_{\Psi}^{-1}\pi_{\Hil_{\Psi}}\psi_i\right> \pi_{\Hil_{\Psi}}\psi_i$ converges and so,
$D_{\Psi}^rC_{\Psi} ^r(\Gamma_{\Psi}^{-1}f)$ converges. 

Conversely,
using \eqref{invTgeneral} , for $f \in \domain{\Gamma_\Psi}$ we have
$$ 
f=\Gamma_{\Psi} \sum_{i\in I} \left< f, \Gamma_{\Psi}^{-1}\pi_{\Hil_{\Psi}}\psi_i\right>\Gamma_{\Psi}^{-1}\pi_{\Hil_{\Psi}}\psi_i. $$
Because by assumption the right hand side is well-defined we have 
$$ f =  \sum_{i\in I} \left< f, \Gamma_{\Psi}^{-1}\pi_{\Hil_{\Psi}}\psi_i\right>\pi_{\Hil_{\Psi}}\psi_i.
$$
\end{proof}
It is easy to see that if $\Psi$ is a lower frame sequence and $D_{\Psi}$ is closable then by 
\eqref{reconstbyT} we  have
\begin{eqnarray*}
f=\sum_{i\in I} \left< f, \psi_i\right> \Gamma_{\Psi}^{-1}\psi_i, \qquad (f\in \domain{\Gamma_{\Psi}}).
\end{eqnarray*}
By Corollary \ref{iffgama}, if $D_{\Psi}$ is a closable operator then we have
$$f=\sum_{i\in I}\left<f, \Gamma_{\Psi}^{-1}\psi_i\right> \psi_i, \qquad (f\in \domain{\Gamma_{\Psi}})$$
if and only if $\left\{\left< f, \Gamma_{\Psi}^{-1}\psi_i\right>\right\}_{i\in I}\in \domain{D_{\Psi}}$.

 As we mentioned, for every lower frame sequence $\Psi$ the sequence $\wiSi$ is a Bessel sequence in  $\Hil$ and so we can show that $\Gamma_{\wiSi}=S_{\wiSi}=\Gamma_{\Psi}^{-1}$ on $\Hil_{\Psi}$. And so, 
$\domain{S_{\wiSi}}=\Hil_{\Psi}$ \cite[Proposition 3]{Corso2019}, and hence
for every $f\in \Hil_{\Psi}$ we have
\begin{eqnarray*}
\Gamma_{\wiSi}f&=& S_{\wiSi}f\\
&=&  \sum_{i\in I} \left< f, \Gamma_{\Psi}^{-1}\pi_{\Hil_{\Psi}}\psi_i\right>\Gamma_{\Psi}^{-1}\pi_{\Hil_{\Psi}}\psi_i=\Gamma_{\Psi}^{-1}f.
\end{eqnarray*}
This shows that $\widetilde{\Psi}$ is an upper semi frame for $\Hil_{\Psi}$.

Now we are ready to address the pseudo-inverse of $C_{\Psi}$ and state it by effective expression.
 \begin{proposition}\label{pseudoC}
Assume that 
$\Psi\subseteq \Hil$ is a lower frame sequence. Then $C_{\Psi}$ has the bounded 
pseudo-inverse $C_{\Psi}^{\dag}=\Gamma_{\Psi}^{-1}{C}_{\Psi}^r$. In particular, $C_{\Psi}^{\dag}=D_{\wiSi}$ on $\domain{{C}_{\Psi}^r}$.
\end{proposition}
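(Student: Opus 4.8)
The plan is to verify that the candidate operator satisfies the three defining properties of the pseudo-inverse from Proposition~\ref{pseudogeneral}, working throughout on $\Hil_{\Psi}$ with the restricted analysis operator $C_{\Psi}^r$ rather than with $C_{\Psi}$ itself: $C_{\Psi}^r$ is closed and densely defined, so the pseudo-inverse theory applies verbatim, whereas $C_{\Psi}$ need not be densely defined. By \eqref{asso-seq} the adjoint $(C_{\Psi}^r)^* = \overline{D_{\Psi}^r}$ is the (restricted) synthesis operator, so the candidate $\Gamma_{\Psi}^{-1}(C_{\Psi}^r)^*$ is of the familiar classical form $S^{-1}D$. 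Since $\Psi$ is a lower frame sequence, $C_{\Psi}=C_{\Psi}^r$ is (bb); being also closed, it is injective with closed range by Proposition~\ref{prop:injcloran1}, whence the pseudo-inverse exists and is bounded (remark following Proposition~\ref{pseudogeneral}). Simultaneously, by Lemma~\ref{invertgeneral} the generalized frame operator $\Gamma_{\Psi}=(C_{\Psi}^r)^*C_{\Psi}^r$ (recall \eqref{eq:restrictgamma1}) is invertible with $\Gamma_{\Psi}^{-1}\in\BL{\Hil_{\Psi}}$.

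First I would set $P:=\Gamma_{\Psi}^{-1}(C_{\Psi}^r)^*$, defined a priori on $\domain{(C_{\Psi}^r)^*}=\domain{\overline{D_{\Psi}^r}}$, and check the three axioms. For the range and kernel conditions I would use that $\range{C_{\Psi}^r}$ is closed, so by Proposition~\ref{4propertdens} one has $\range{(C_{\Psi}^r)^*}=\kernel{C_{\Psi}^r}^{\perp}=\Hil_{\Psi}$ and $\kernel{(C_{\Psi}^r)^*}=\range{C_{\Psi}^r}^{\perp}$; composing with the bounded injection $\Gamma_{\Psi}^{-1}$ (whose range $\domain{\Gamma_{\Psi}}$ is dense) then yields $\overline{\range{P}}=\Hil_{\Psi}=\kernel{C_{\Psi}^r}^{\perp}$ and $\kernel{P}=\range{C_{\Psi}^r}^{\perp}$. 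For the third axiom, on $\domain{C_{\Psi}^r}$ one computes $C_{\Psi}^r P C_{\Psi}^r=C_{\Psi}^r\Gamma_{\Psi}^{-1}(C_{\Psi}^r)^*C_{\Psi}^r=C_{\Psi}^r\Gamma_{\Psi}^{-1}\Gamma_{\Psi}=C_{\Psi}^r$. Decomposing any $c\in\domain{(C_{\Psi}^r)^*}$ along $\range{C_{\Psi}^r}\oplus\range{C_{\Psi}^r}^{\perp}$ and writing $c=C_{\Psi}^r f+c_2$ gives $Pc=(C_{\Psi}^r)^{-1}(C_{\Psi}^r f)=f$, so $\|Pc\|\le m^{-1}\|c\|$ with $m$ the lower bound; hence $P$ is bounded on its dense domain and its closure is, by the uniqueness in Proposition~\ref{pseudogeneral}, the bounded pseudo-inverse $C_{\Psi}^{\dag}$ on all of $\ell^2$.

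For the ``in particular'' statement I would identify $P$ with $D_{\wiSi}$ using the definition $\wisi_i=\Gamma_{\Psi}^{-1}\pi_{\Hil_{\Psi}}\psi_i$: for $c\in\domain{D_{\Psi}^r}$,
$$ Pc=\Gamma_{\Psi}^{-1}D_{\Psi}^r c=\Gamma_{\Psi}^{-1}\sum_{i\in I}c_i\,\pi_{\Hil_{\Psi}}\psi_i=\sum_{i\in I}c_i\,\Gamma_{\Psi}^{-1}\pi_{\Hil_{\Psi}}\psi_i=D_{\wiSi}c, $$
where the interchange is justified by boundedness of $\Gamma_{\Psi}^{-1}$. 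Since $\wiSi$ is a Bessel sequence, $D_{\wiSi}$ is bounded and everywhere defined, so the identity $C_{\Psi}^{\dag}=\Gamma_{\Psi}^{-1}\overline{D_{\Psi}^r}=D_{\wiSi}$, valid on the dense set $\domain{\overline{D_{\Psi}^r}}$, extends to all of $\ell^2$ by continuity. The main obstacle is precisely the bookkeeping of domains: the factor $(C_{\Psi}^r)^*=\overline{D_{\Psi}^r}$ is in general unbounded, because a lower frame sequence need not be Bessel, so $P$ is genuinely only partially defined; the crux is to see that pre-composition with $\Gamma_{\Psi}^{-1}$ nonetheless produces a bounded operator, which is exactly what the closed-range property forced by the lower frame bound guarantees.
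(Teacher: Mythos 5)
Your proof is correct and takes essentially the same route as the paper: both verify the defining properties of the pseudo-inverse from Corollary~\ref{pseudogeneral} for the candidate $P=\Gamma_{\Psi}^{-1}\left(C_{\Psi}^r\right)^*$ — the range condition, the kernel condition via Proposition~\ref{4propertdens}, and $C_{\Psi}^r P C_{\Psi}^r=C_{\Psi}^r$ — and then identify $P$ with $D_{\wiSi}$ by a direct computation. If anything, you are more careful than the paper: you read its loosely written factor ``$C_{\Psi}^r$'' correctly as the adjoint $\left(C_{\Psi}^r\right)^*=\overline{D_{\Psi}^r}$, and you supply the explicit boundedness argument (the decomposition $c=C_{\Psi}^r f+c_2$ along the closed range) that the paper leaves implicit.
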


\begin{proof}
It is straightforward to see that
$C_{\Psi}\Gamma_{\Psi}^{-1}{C}_{\Psi}^r{C}_{\Psi}=C_{\Psi}$. Also, by the surjectivity ${C}_{\Psi}^r$ on $\Hil_{\Psi}$  (Corollary \ref{eq:clD to psi}) and injectivity $C_{\Psi}$  (\cite[Lemma 3.1]{casoleli1})  follow that
\begin{eqnarray*}
\overline{\range{\Gamma_{\Psi}^{-1}{C}_{\Psi}^r}}=\overline{\domain{C_{\Psi}}}=\Hil_{\Psi}=\kernel{C_{\Psi}}^{\perp}.
\end{eqnarray*}
Using Proposition \ref{4propertdens} (4) we have 
\begin{eqnarray*}
\kernel{\Gamma_{\Psi}^{-1}{C}_{\Psi}^r}=\kernel{{C}_{\Psi}^r}=\range{C_{\Psi}}^{\perp}.
\end{eqnarray*}
So, $C_{\Psi}^{\dag}=\Gamma_{\Psi}^{-1}{C}_{\Psi}^r$. 
Also, for all $g\in \Hil$ and $\{c_i\}_{i\in I}\in \domain{{C}_{\Psi}^{r}}$ we have
\begin{eqnarray*}
\left< \Gamma_{\Psi}^{-1}{C}_{\Psi}^{r}\{c_i\}_{i\in I}, g\right>&=&\left< {C}_{\Psi}^{r}\{c_i\}_{i\in I}, \Gamma_{\Psi}^{-1}\pi_{\Hil_{\Psi}}g\right>\\
&=&\left< \{c_i\}_{i\in I}, C_{\Psi}\Gamma_{\Psi}^{-1}\pi_{\Hil_{\Psi}}g\right>\\
&=&\sum_{i\in I} \left<c_i, \left<  \Gamma_{\Psi}^{-1}\pi_{\Hil_{\Psi}}g, \psi_i\right>\right>\\
&=&\sum_{i\in I}c_i\left< \Gamma_{\Psi}^{-1}\pi_{\Hil_{\Psi}}\psi_i, g\right>\\
&=&\left<\sum_{i\in I}c_i\Gamma_{\Psi}^{-1}\pi_{\Hil_{\Psi}}\psi_i
,g\right>=\left<D_{\wiSi}\{c_i\}_{i\in I},g \right>.
\end{eqnarray*}
\end{proof}

\subsection{Generalized Gram Matrix}
Another sesquilinear form \cite{Corso2019} can be presented for sequences with the following argument. Assume that $\Psi$ is a sequence.   Then there is a  sesquilinear form
\begin{eqnarray*}
\Theta_{\Psi}(\{c_i\}_{i\in I}, \{d_i\}_{i\in I})=\sum_{i,j\in I}c_i\overline{d_j}\langle \psi_i,\psi_j\rangle, \qquad (\{c_i\}_{i\in I},\{d_i\}_{i\in I}\in \domain{D_{\Psi}})
\end{eqnarray*}
 on $\domain{D_{\Psi}}\times \domain{D_{\Psi}}$ and
$$\Theta_{\Psi}(\{c_i\}_{i\in I}, \{d_i\}_{i\in I})=\left<D_{\Psi}\{c_i\}_{i\in I}, D_{\Psi}\{d_i\}_{i\in I}\right>.$$

If $\Psi$ is a Riesz-Fischer sequence and $D_{\Psi}$ is a closable operator then 
the closure $\overline{\Theta}_{\Psi}$ of $\Theta_{\Psi}$ is a  non-negative, symmetric, closed and densely defined sesquilinear form and  
 $\domain{\overline{\Theta}_{\Psi}}=\domain{C_{\Psi}^*}$ \cite[Proposition 4]{Corso2019}. By Kato's second representation theorem for sesquilinear forms in this case there is a positive and self-adjoint operator $U_{\Psi}$ such that $\domain{U_{\Psi}}\subseteq \domain{C_{\Psi}^*}$  is dense in $\domain{C_{\Psi}^*}$ and 
 $$\overline{\Theta}_{\Psi}(\{c_i\}, \{d_i\})=\left\langle U_{\Psi}\{c_i\}, \{d_i\}\right\rangle,\qquad (\{c_i\}\in \domain{U_{\Psi}},  \{d_i\}\in \domain{C_{\Psi}^*}).$$
As in \cite[Proposition 4]{Corso2019} we have $U_{\Psi}=C_{\Psi}C_{\Psi}^*=C_{\Psi}\overline{D}_{\Psi}$. We call the operator $U_{\Psi}$, the \textit{generalized Gram matrix}. 

For every sequence $\Psi$ with  closable synthesis operator, we have 
 \begin{eqnarray}\label{Uc,d}
 \left<  U_{\Psi}c,d\right>= \sum_{i,j\in I}  c_i\overline{d_j}\left<\psi_i,\psi_j\right>, \qquad (c\in \domain{U_{\Psi}}, d\in \domain{\overline{D}_{\Psi}})
 \end{eqnarray}
 if  $\psi_i\in \domain{C_{\Psi}}$ 
 for all $i\in I$. Indeed, assume that  $\psi_i\in \domain{C_{\Psi}}$. Then
 \begin{eqnarray*}
 \left< U_{\Psi}c,d\right>&=& \left< C_{\Psi}C_{\Psi}^*c,d\right>\\
 &=&\left< \left\{\left<C_{\Psi}^*c,\psi_j\right>\right\}_{j},\{d_j\}_j\right>\\
 &=&\sum_{j\in I}\overline{d_j}\left<C_{\Psi}^*c,\psi_j\right>\\
 &=&\sum_{j\in I}\overline{d_j}\left<c,C_{\Psi}\psi_j\right>\\
 &=& \sum_{j\in I}\overline{d_j}\left<\{c_i\}_i,\left\{\left<\psi_j,\psi_i\right>\right\}_{i}\right>=\sum_{i,j\in I}  c_i\overline{d_j}\left<\psi_i,\psi_j\right>,
 \end{eqnarray*}
 for all $c\in \domain{U_{\Psi}}, d\in \domain{\overline{D}_{\Psi}}$.

 One may ask how can we extend the consideration for $D_{\Psi}$ not closable? 
As above ${C}_{\Psi}:\domain{C_{\Psi}}\subseteq \Hil_{\Psi}\to \ell^2$ is a closed densely defined operator and so ${C}_{\Psi}^{r}$ is. 
Let $\Theta_{\Psi}^r: \domain{D_{\Psi}^r}\times \domain{D_{\Psi}^r}\to \ell^2$ given by 
$\Theta_{\Psi}^r(c,d)=\left<  D_{\Psi}^r c, D_{\Psi}^r d\right>$. It 
is a  densely defined and  closable sesquilinear form with the closure 
$\overline{\Theta_{\Psi}^r}:\domain{\overline{{D}_{\Psi}^r}}\times \domain{\overline{{D}_{\Psi}^r}}\to \ell^2$ defined by 
$\overline{\Theta}_{\Psi}^r(c,d) = \left< \overline{D_\Psi^r} c, \overline{D_\Psi^r} d \right>$. 
Now, we can talk about the operator $U_{\Psi}^r$, where $U_{\Psi}^r: \domain{U_{\Psi}^r}\subseteq \ell^2 \to \ell^2$ and $\domain{U_{\Psi}^r}$ is dense in $\domain{\overline{{D}_{\Psi}^r}}$. Moreover, 
 $\overline{\Theta}_{\Psi}$ is a closed, densely defined, and non-negative sesquilinear form and so there is a positive and self-adjoint operator $U_{\Psi}$ in $\ell^2$ such that $\domain{U_{\Psi}^{1/2}}=\domain{\overline{{D}_{\Psi}^r}}$ and 
$$\overline{\Theta}_{\Psi}(c,d)=\left< U_{\Psi}^{1/2}c, U_{\Psi}^{1/2}d\right>, \qquad (c,d \in \domain{\overline{{D}_{\Psi}^r}}).$$
 This shows that for all $c,d \in \domain{\overline{D_{\Psi}^r}}$,
$$\overline{\Theta_{\Psi}^r}(c,d)=\left<\overline{D_{\Psi}^r} c, \overline{D_{\Psi}^r} d\right> =\overline{\Theta}_{\Psi}(c,d).$$
\begin{figure}[ht]
\center
	\begin{picture}(100,60)
	\put(10,50){$\Hil_{\Psi}$}
	\put(90,50){$\ell^2$}
	\put(87,52){\vector(-4,0){72}}
	\put(45,55){$D_{\Psi}^{r}$} 
	\put(38,32){$C_{\Psi}^r$} 
	\put(13,47){\vector(0,-4){30}}
	\put(91,17){\vector(0,4){30}}
   \put(15,14){\vector(2,1){70}}
	\put(11,10){$\Hil_{\Psi}$}
	\put(4,30){$\Gamma_{\Psi}^r$}
	\put(91,10){$\ell^2$}
	\put(84,30){$U_{\Psi}$}

	\put(15,12){\vector(4,0){72}}
	\put(50,7){$C_{\Psi}^r$} 
	\end{picture}
\caption{The generalized frame operator and the generalized Gram matrix} \label{fig:matop1}
\end{figure}
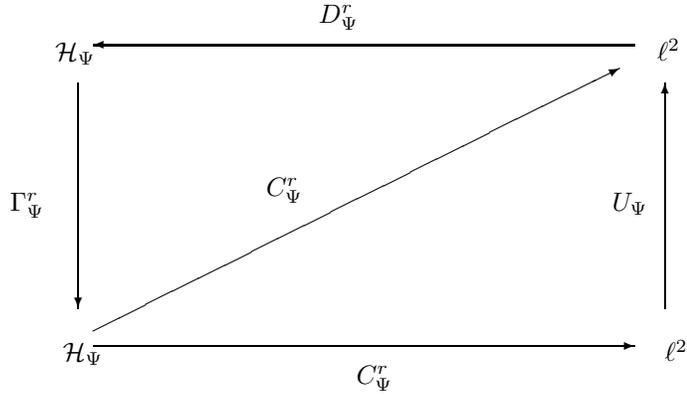

 \begin{lemma}\label{tetaRi}
 Let $\Psi$ be a  sequence in $\Hil$. 
 Then the following are equivalent.
 \begin{enumerate}
     \item $\Psi$ is a Riesz-Fischer sequence.
     \item $U_{\Psi}^r$ is invertible (BI) and $\left(U_{
     \Psi}^r\right)^{-1}\in \BL{\ell^2}$.
 \end{enumerate}
 \end{lemma}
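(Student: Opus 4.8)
The plan is to recognise $U_{\Psi}^r$ as $\left(\overline{D_{\Psi}^r}\right)^{*}\overline{D_{\Psi}^r}$ and to mirror, on the synthesis side, the argument already used for the generalized frame operator in Lemma \ref{surj=inj}. Writing $T:=\overline{D_{\Psi}^r}$, recall that $T$ is closed and densely defined with $T^{*}=C_{\Psi}^r$, so by Proposition \ref{4propertdens} (6) the operator $U_{\Psi}^r=T^{*}T$ is closed, densely defined, self-adjoint and non-negative on $\ell^2$. The first reduction I would make is to translate the Riesz--Fischer property into a statement about $T$ alone: by Corollary \ref{mathDlowriesz}(2), $\Psi$ is Riesz--Fischer if and only if $T$ is injective with closed range, and since $T$ is closed, Proposition \ref{prop:injcloran1} rephrases this as ``$T$ is (bb)''. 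Thus it suffices to prove that $T$ is (bb) if and only if $U_{\Psi}^r$ is (BI) with $\left(U_{\Psi}^r\right)^{-1}\in\BL{\ell^2}$.

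Second, I would dispose of the self-adjoint operator $U_{\Psi}^r$ on its own terms. Exactly as in Lemma \ref{surj=inj}, because $U_{\Psi}^r$ is closed, densely defined, self-adjoint and non-negative, Proposition \ref{eq:surj} together with Corollary \ref{eq:BI} shows that its four invertibility notions -- (bb), surjectivity onto $\ell^2$, injectivity with closed range, and (BI) -- all coincide; moreover whenever $U_{\Psi}^r$ is (BI) its inverse is automatically in $\BL{\ell^2}$. Hence clause (2) of the lemma is equivalent to ``$U_{\Psi}^r$ is (bb)'', and the whole statement collapses to the single equivalence ``$T$ is (bb) $\Leftrightarrow$ $U_{\Psi}^r$ is (bb)''.

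Third -- the crux -- I would connect the boundedness below of $T$ with that of $U_{\Psi}^r$ through the Kato representation recorded in the construction of $U_{\Psi}^r$. There one has $\domain{\left(U_{\Psi}^r\right)^{1/2}}=\domain{T}$ together with $\norm{}{\left(U_{\Psi}^r\right)^{1/2}c}=\norm{}{Tc}$ for every $c$ in this common domain. Consequently $T$ is (bb) with bound $\beta$ if and only if the positive self-adjoint operator $\left(U_{\Psi}^r\right)^{1/2}$ is (bb) with the same bound (no separate core or closure step is needed, precisely because the two domains agree exactly). The remaining passage between $\left(U_{\Psi}^r\right)^{1/2}$ and $U_{\Psi}^r$ is the only place where one steps slightly outside the explicit lemmas quoted so far: by the spectral theorem for the positive self-adjoint operator $U_{\Psi}^r$ one has $\sigma\!\left(U_{\Psi}^r\right)=\{\lambda^2:\lambda\in\sigma\!\left(\left(U_{\Psi}^r\right)^{1/2}\right)\}$, so $0$ is bounded away from $\sigma\!\left(U_{\Psi}^r\right)$ if and only if it is bounded away from $\sigma\!\left(\left(U_{\Psi}^r\right)^{1/2}\right)$, i.e. $U_{\Psi}^r$ is (BI) if and only if $\left(U_{\Psi}^r\right)^{1/2}$ is (BI). Chaining these equivalences closes the loop.

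I expect the main obstacle to be exactly this last square-root bookkeeping. One can sidestep the functional calculus for $\lambda\mapsto\lambda^{1/2}$ and argue directly instead: the easy direction ``$T$ (bb) $\Rightarrow U_{\Psi}^r$ (bb)'' follows from $\left\langle U_{\Psi}^r c,c\right\rangle=\norm{}{Tc}^2\ge\beta^2\norm{}{c}^2$ and Cauchy--Schwarz, which yields $\norm{}{U_{\Psi}^r c}\ge\beta^2\norm{}{c}$; for the converse, once $U_{\Psi}^r$ is (BI) its spectrum lies in $[\delta,\infty)$ with $\delta=1/\norm{}{\left(U_{\Psi}^r\right)^{-1}}$, whence $\left\langle U_{\Psi}^r c,c\right\rangle\ge\delta\norm{}{c}^2$ and therefore $\norm{}{Tc}^2\ge\delta\norm{}{c}^2$ on $\domain{U_{\Psi}^r}$, which extends to all of $\domain{T}$ by the graph-norm density of $\domain{U_{\Psi}^r}$ in $\domain{\overline{D_{\Psi}^r}}$ noted when $U_{\Psi}^r$ was built. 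Either route works; I would present the form-identity version as the primary one since the exact match of domains makes it the most economical, and keep the direct quadratic-form estimate in reserve as the elementary substitute for the spectral step.
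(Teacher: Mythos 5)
Your proof is correct and takes essentially the same route as the paper's: both reduce the Riesz--Fischer property to boundedness below of $\overline{D_{\Psi}^r}$, use the Kato second-representation identity $\|(U_{\Psi}^r)^{1/2}c\|=\|\overline{D_{\Psi}^r}c\|$ on the common domain $\domain{(U_{\Psi}^r)^{1/2}}=\domain{\overline{D_{\Psi}^r}}$, and then pass from boundedness below to bounded invertibility of the self-adjoint operator $U_{\Psi}^r$. The only difference is one of explicitness: where the paper writes ``it can be seen'' and compresses the final step into ``iff $\|U_{\Psi}^{-1}\|\leq A$'', you justify the reduction via Corollary \ref{mathDlowriesz} and Proposition \ref{prop:injcloran1}, identify $U_{\Psi}^r=\left(\overline{D_{\Psi}^r}\right)^*\overline{D_{\Psi}^r}$, and spell out the square-root bookkeeping.
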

 \begin{proof}
First note that if $\Psi$ is a sequence in $\Hil$, then 
$D_{\Psi}^r$
is a densely defined and closable operator (since $C_{\Psi}^r$ is a densely defined operator in $\Hil_{\Psi}$). Then 
$D_{\Psi}^r$
is (bb) if and only if  $\overline{D_{\Psi}^r}
$ is (bb) (using Lemma \ref{biclos})). It can be seen that
$\Psi$ is a Riesz-Fischer sequence if and only if  $\Theta_{\Psi}^r$ 
(bb) with positive lower bound $A$ if and only if  $\overline{\Theta_{\Psi}^r}$ is (bb) with positive lower bound $A$ (using Lemma \ref{biclos}) if and only if  $\|U_{\Psi}^{1/2}c\|^2=\overline{\Theta}_{\Psi}(c,c)\geq A\|c\|^2$ if and only if $\|U_{\Psi}^{-1}\|\leq A$.
\end{proof}

 Assume that $\Psi$ is a Riesz-Fischer sequence such that $\psi_i\in \domain{C_{\Psi}}$, for all $i\in I$ and $D_{\Psi}$ is a closable operator. Then for all $c\in \ell^2$ and $d\in \domain{\overline{D}_{\Psi}}$ we get
 \begin{eqnarray*}
 \left< c,d\right>=
 \left<U_{\Psi}U_{\Psi}^{-1}c,d\right>
 =\sum_{i,j\in I} \left( U_{\Psi}^{-1}c\right)_i\overline{d_j}\left<\psi_i,\psi_j\right>.
 \end{eqnarray*}

\subsection{Lower frame  and minimal sequences}
In this section - maybe surprisingly - we can now show that a lot of results that are known for frames are also valid for lower frame sequence (with some twist). 

For a lower frame sequence $\Psi$,  
we get
\begin{eqnarray}\label{weakgamma}\left<h,g\right>=\sum_{i\in I} \left<h, \wisi_i\right> \left<\psi_i,g\right>,\end{eqnarray}
for all $h\in \Hil$ and $g\in \domain{C_{\Psi}}$. Indeed,
\begin{eqnarray*}
 \sum_{i\in I} \left<h, \wisi_i\right> \left<\psi_i,g\right>&=&\sum_{i\in I} \left<\Gamma_{\Psi}^{-1}\pi_{\Hil_{\Psi}}h, \psi_i\right> \left<\psi_i,g\right>\\
&=& \left<\Gamma_{\Psi}\Gamma_{\Psi}^{-1}\pi_{\Hil_{\Psi}}h,g \right>\\
&=&\left\langle \pi_{\Hil_{\Psi}}h,g \right>= \left<h,g\right>
.
\end{eqnarray*}

In the following, we show that for a lower frame sequence $\Psi$, the coefficients $\left\{\left<f, \wisi_i \right>\right\}_{i\in I}$ have a minimal $\ell^2$-norm  among all sequences represents $h\in \Hil$ in a weak sense.
\begin{lemma}\label{minnorm}
Let $\Psi$ be a lower frame  sequence in $\Hil$ 
and $h\in \Hil$. 
Let $h$ be represented weakly by $\psi_i$, i.e. 
there is some coefficients $\{c_i\}_{i\in I}$ such that  $\left<h,g\right>=\left<\sum_{i\in I}c_i\psi_i,g\right>$, for all $g\in \domain{C_{\Psi}}$. Then 
$$\sum_{i\in I}|c_i|^2=\sum_{i\in I}\left|\left< h,\wisi_i\right>\right|^2+\sum_{i\in I}\left|c_i-\left< h,\wisi_i\right>\right|^2.$$
 In particular, $\sum_{i\in I}|c_i|^2\geq \sum_{i\in I}\left|\left< h,\wisi_i\right>\right|^2$ and so the canonical dual gives minimum norm. 
\end{lemma}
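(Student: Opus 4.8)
The plan is to read the claimed equality as the Pythagorean identity in $\ell^2$ for the splitting of the coefficient sequence into its canonical-dual part and a remainder. Write $a=\{\left<h,\wisi_i\right>\}_{i\in I}$ and $b=\{c_i\}_{i\in I}$. Since $\wiSi$ is a Bessel sequence in $\Hil$ (as recalled before the lemma), we have $a=C_{\wiSi}h\in\ell^2$. The identity $\|b\|^2=\|a\|^2+\|b-a\|^2$ is then equivalent to the orthogonality $\left<b-a,a\right>_{\ell^2}=0$, i.e. to the single scalar equation
$$\sum_{i\in I}c_i\,\overline{\left<h,\wisi_i\right>}=\sum_{i\in I}\bigl|\left<h,\wisi_i\right>\bigr|^2 .$$
So the whole proof reduces to showing that both sides equal one common number, and the natural candidate for that number is $\left<h,g_0\right>$ with $g_0:=\Gamma_{\Psi}^{-1}\pi_{\Hil_{\Psi}}h$.

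The key computational step is choosing $g_0$ as the test vector in the weak-representation hypothesis. It lies in $\range{\Gamma_{\Psi}^{-1}}=\domain{\Gamma_{\Psi}}\subseteq\domain{C_{\Psi}}$, so it is admissible. Using that $\Gamma_{\Psi}^{-1}$ is a bounded self-adjoint operator on $\Hil_{\Psi}$ (Lemma \ref{invertgeneral}) together with $\wisi_i=\Gamma_{\Psi}^{-1}\pi_{\Hil_{\Psi}}\psi_i\in\Hil_{\Psi}$, I would rewrite $\overline{\left<h,\wisi_i\right>}=\left<\wisi_i,h\right>=\left<\psi_i,g_0\right>$. Plugging this into the left-hand side and invoking the hypothesis $\left<h,g\right>=\sum_i c_i\left<\psi_i,g\right>$ at $g=g_0$ gives $\sum_i c_i\,\overline{\left<h,\wisi_i\right>}=\left<h,g_0\right>$.

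For the right-hand side the same rewriting yields $\left<h,\wisi_i\right>=\left<g_0,\psi_i\right>$, whence $\sum_i|\left<h,\wisi_i\right>|^2=\|C_{\Psi}g_0\|^2=\Omega_{\Psi}(g_0,g_0)=\left<\Gamma_{\Psi}g_0,g_0\right>$; since $\Gamma_{\Psi}g_0=\pi_{\Hil_{\Psi}}h$ and $g_0\in\Hil_{\Psi}$, this is again $\left<h,g_0\right>$. Comparing the two displays gives $\left<b,a\right>_{\ell^2}=\|a\|^2$, hence $\left<b-a,a\right>_{\ell^2}=0$, and the stated identity follows by expanding $\|b\|^2=\|(b-a)+a\|^2$. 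The final assertion is then immediate: $\sum_i|c_i-\left<h,\wisi_i\right>|^2\ge 0$ forces $\sum_i|c_i|^2\ge\sum_i|\left<h,\wisi_i\right>|^2$, with equality exactly when $c_i=\left<h,\wisi_i\right>$, so the canonical-dual coefficients are the unique minimizers.

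The main obstacle, and essentially the only place needing care, is convergence and legitimacy rather than any hard estimate: I must stress that $g_0$ is a genuine admissible test element, that $\sum_i c_i\left<\psi_i,g_0\right>=\left<h,g_0\right>$ is literally the hypothesis evaluated at $g_0$ and not an unjustified interchange of summation, and that the equality persists in the degenerate case $\{c_i\}\notin\ell^2$. In that case $a\in\ell^2$ forces $b-a\notin\ell^2$, so both $\sum_i|c_i|^2$ and $\sum_i|c_i-\left<h,\wisi_i\right>|^2$ are $+\infty$ while the cross-term series still converges (it is the weak-representation series at $g_0$); thus the identity holds in the extended sense and the inequality is trivial.
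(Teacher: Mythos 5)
Your proof is correct and follows essentially the same route as the paper's: both evaluate the weak representation at the test vector $g_0=\Gamma_{\Psi}^{-1}\pi_{\Hil_{\Psi}}h$ and identify both the cross term $\sum_{i\in I} c_i\overline{\left<h,\wisi_i\right>}$ and the sum $\sum_{i\in I}\left|\left<h,\wisi_i\right>\right|^2$ with the common value $\left<h,g_0\right>$. The only differences are cosmetic: you package the conclusion as orthogonality of $b-a$ and $a$ in $\ell^2$ rather than expanding $\sum_{i\in I}\left|c_i-\left<h,\wisi_i\right>\right|^2$ directly as the paper does, and you additionally treat the degenerate case $\{c_i\}\notin\ell^2$, which the paper leaves implicit.
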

\begin{proof}
At first, note that for all $h\in \Hil$ we get
\begin{eqnarray*}
\left< h,\Gamma_{\Psi}^{-1}\pi_{\Hil_{\Psi}}h\right>&=&
\left< \sum_{i\in I}c_i\psi_i,\Gamma_{\Psi}^{-1}\pi_{\Hil_{\Psi}}h\right>\\
&=&\sum_{i\in I}c_i\left< \psi_i,\Gamma_{\Psi}^{-1}\pi_{\Hil_{\Psi}}h\right>= \sum_{i\in I}c_i\left< \Gamma_{\Psi}^{-1}\pi_{\Hil_{\Psi}}\psi_i,h\right>.
\end{eqnarray*}
 On the other hand, 

\begin{eqnarray*}
\sum_{i\in I}\left|\left< h,\wisi_i\right>\right|^2&=& \sum_{i\in I}\left|\left< \Gamma_{\Psi}^{-1}\pi_{\Hil_{\Psi}}h,\psi_i\right>\right|^2\\
&=&\sum_{i\in I}\left< \Gamma_{\Psi}^{-1}\pi_{\Hil_{\Psi}}h,\psi_i\right>\left< \psi_i,\Gamma_{\Psi}^{-1}\pi_{\Hil_{\Psi}}h\right>\\
&=&\left< \Gamma_{\Psi}\Gamma_{\Psi}^{-1}\pi_{\Hil_{\Psi}}h,\Gamma_{\Psi}^{-1}\pi_{\Hil_{\Psi}}h\right>= \left< h,\Gamma_{\Psi}^{-1}\pi_{\Hil_{\Psi}}h\right>.
\end{eqnarray*}
So,  for all  $h\in \Hil$ we have
 \begin{eqnarray*}
\sum_{i\in I}\left|c_i-\left< h,\wisi_i\right>\right|^2&=&\sum_{i\in I} |c_i|^2  + \sum_{i\in I}\left|\left< h,\wisi_i\right>\right|^2 - \sum_{i\in I}\overline{c_i}\left< h, \Gamma_{\Psi}^{-1}\pi_{\Hil_{\Psi}}\psi_i\right>  - \sum_{i\in I}c_i\left< \Gamma_{\Psi}^{-1}\pi_{\Hil_{\Psi}}\psi_i,h\right>\\
&=& \sum_{i\in I} |c_i|^2 + \left< h,\Gamma_{\Psi}^{-1}\pi_{\Hil_{\Psi}}h\right> - \left<\Gamma_{\Psi}^{-1}\pi_{\Hil_{\Psi}} h,h\right>  - \left< h,\Gamma_{\Psi}^{-1}\pi_{\Hil_{\Psi}}h\right>\\
&=& \sum_{i\in I} |c_i|^2 - \left< \Gamma_{\Psi}^{-1}\pi_{\Hil_{\Psi}}h,h\right>\\
&=& \sum_{i\in I} |c_i|^2- \sum_{i\in I}\left|\left< h,\wisi_i\right>\right|^2.
\end{eqnarray*}
\end{proof}
 By Corollary \ref{pseudogeneral} and Lemma \ref{minnorm}, we can present an explicit expression for the pseudo-inverse of $\overline{D}_{\Psi}$, where  $\Psi$ is a lower frame sequence with closable synthesis operator.
\begin{theorem}
Let $\Psi$ be a lower frame sequence in $\Hil$. 
Then 
$$\overline{D_{\Psi}^r}^{\dag}f=\left\{\left<f,\wisi_i\right>\right\}_{i\in I}, \qquad (f\in \Hil_{\Psi}).$$
In particular, if $D_{\Psi}$ is closable, then $\overline{D}_{\Psi}^{\dag}=\left\{\left<f,\wisi_i\right>\right\}_{i\in I}$.
\end{theorem}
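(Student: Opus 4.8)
The plan is to identify $\overline{D_\Psi^r}^\dag f$ with the minimal $\ell^2$-norm solution of the equation $\overline{D_\Psi^r} c = f$, and then to recognise, via Lemma \ref{minnorm}, that this minimiser is exactly the coefficient sequence $\{\langle f, \wisi_i\rangle\}_{i\in I}$. First I would record the structural facts that make this work: by \eqref{asso-seq}, $\overline{D_\Psi^r} = (C_\Psi^r)^* = C_\Psi^\times$ is closed and densely defined, and since $\Psi$ is a lower frame sequence, Corollary \ref{mathDlowriesz}(1) gives that $\overline{D_\Psi^r}$ is surjective onto $\Hil_\Psi$. Its range $\Hil_\Psi$ is therefore closed, so $\overline{D_\Psi^r}^\dag$ exists, is bounded, and obeys the defining relations of Proposition \ref{pseudogeneral}; in particular $\overline{D_\Psi^r}^\dag f \in \overline{\range{\overline{D_\Psi^r}^\dag}} = \kernel{\overline{D_\Psi^r}}^\perp$, and by surjectivity $\overline{D_\Psi^r}\,\overline{D_\Psi^r}^\dag f = f$ for every $f\in\Hil_\Psi$.

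The key step is to describe the solution set of $\overline{D_\Psi^r} c = f$. For $c \in \ell^2$ and $f \in \Hil_\Psi$, I would show that $c \in \domain{\overline{D_\Psi^r}}$ with $\overline{D_\Psi^r} c = f$ holds if and only if $c$ represents $f$ weakly, that is $\langle f, g\rangle = \sum_{i\in I} c_i \langle \psi_i, g\rangle$ for all $g \in \domain{C_\Psi}$. This is just the definition of the adjoint $(C_\Psi^r)^*$: the functional $g \mapsto \langle C_\Psi^r g, c\rangle = \sum_i \langle g, \psi_i\rangle\,\overline{c_i}$ is bounded on $\domain{C_\Psi}$ and equals $\langle g, f\rangle$ precisely when $c \in \domain{(C_\Psi^r)^*}$ with $(C_\Psi^r)^* c = f$; conjugating, this reads $\langle f, g\rangle = \sum_i c_i \langle \psi_i, g\rangle$, which is exactly the weak-representation hypothesis of Lemma \ref{minnorm}.

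Then I would feed in the candidate $c^0 := \{\langle f, \wisi_i\rangle\}_{i\in I}$. Since $\wiSi$ is a Bessel sequence, $c^0 \in \ell^2$; and by \eqref{weakgamma} (taken with $h = f$) it represents $f$ weakly, hence $\overline{D_\Psi^r} c^0 = f$ by the previous paragraph. Lemma \ref{minnorm}, applied with $h=f$, then yields for every solution $c$ the identity $\sum_i |c_i|^2 = \sum_i |\langle f, \wisi_i\rangle|^2 + \sum_i |c_i - \langle f, \wisi_i\rangle|^2$, so $c^0$ is the \emph{unique} minimal-norm solution. On the other hand $\overline{D_\Psi^r}^\dag f$ is also a solution and lies in $\kernel{\overline{D_\Psi^r}}^\perp$, hence is itself of minimal norm among all solutions; uniqueness of the minimiser forces $\overline{D_\Psi^r}^\dag f = c^0$, which is the claim.

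Finally, for the ``in particular'' clause: when $D_\Psi$ is closable, $C_\Psi$ is densely defined on all of $\Hil$, so $\Hil_\Psi = \Hil$, $C_\Psi^r = C_\Psi$ and $\overline{D_\Psi^r} = C_\Psi^* = \overline{D}_\Psi$, while $\wisi_i = \Gamma_\Psi^{-1}\psi_i$; substituting gives $\overline{D}_\Psi^\dag f = \{\langle f, \wisi_i\rangle\}_{i\in I}$ for all $f \in \Hil$. The main obstacle I anticipate is the bookkeeping in the middle paragraph, namely matching the domain of the closed adjoint $(C_\Psi^r)^*$ with the set of $\ell^2$ weak representers of $f$, and ensuring the minimal-norm characterisation of the pseudo-inverse is invoked with range restricted to $\Hil_\Psi$. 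A shortcut bypassing Lemma \ref{minnorm} would be to combine the adjoint--pseudoinverse identity $(T^*)^\dag = (T^\dag)^*$ (with $T = C_\Psi^r$) with Proposition \ref{pseudoC}, but the minimal-norm route is the one suggested by the cited results.
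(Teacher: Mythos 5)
Your proposal is correct and takes essentially the same route as the paper, whose own proof is just the one line ``it immediately follows from \eqref{weakgamma} and Lemma \ref{minnorm}'': you characterise $\overline{D_{\Psi}^r}^{\dag}f$ as the minimal-norm solution of $\overline{D_{\Psi}^r}c=f$, use \eqref{weakgamma} to show that $\left\{\left<f,\wisi_i\right>\right\}_{i\in I}$ is a solution, and use Lemma \ref{minnorm} to show it is the unique minimiser. The only difference is that you make explicit the bookkeeping the paper suppresses (surjectivity of $\overline{D_{\Psi}^r}$ via Corollary \ref{mathDlowriesz}, and the identification of $\domain{\left(C_{\Psi}^r\right)^*}$ with the set of weak $\ell^2$-representers of $f$), all of which is sound.
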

\begin{proof}
It immediately follows from \eqref{weakgamma} and Lemma \ref{minnorm}.
\end{proof}
In the following result, we present some enough conditions for exact lower frame sequences.
\begin{theorem}\label{ex=inco}
Suppose that $\Psi$ is a lower frame sequence in $\Hil$. Then for any $j\in I$
\begin{enumerate}
    \item $\left<\psi_j, \wisi_j\right>\neq 1$, then $\{\psi_{k}\}_{k\neq j}$ is a lower frame sequence for $\Hil$.
    \item $\left<\psi_j, \wisi_j\right>= 1$, then $\{\psi_{k}\}_{k\neq j}$ is incomplete.
\end{enumerate}
\end{theorem}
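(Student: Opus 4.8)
The plan is to reduce everything to the single scalar $a_j := \langle \psi_j, \wisi_j\rangle$, where $\wisi_j = \Gamma_{\Psi}^{-1}\pi_{\Hil_{\Psi}}\psi_j$, and to prove first the auxiliary fact that $a_j$ is real with $0 \le a_j \le 1$. Since $\Gamma_{\Psi}^{-1}\in\BL{\Hil_{\Psi}}$ maps $\Hil_{\Psi}$ onto $\domain{\Gamma_{\Psi}}$ (Lemma \ref{invertgeneral}), we have $\wisi_j\in\domain{\Gamma_{\Psi}}\subseteq\domain{C_{\Psi}}$ and $\Gamma_{\Psi}\wisi_j = \pi_{\Hil_{\Psi}}\psi_j$. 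The defining sesquilinear form then gives $\sum_{i}|\langle\wisi_j,\psi_i\rangle|^2 = \langle\Gamma_{\Psi}\wisi_j,\wisi_j\rangle = \langle\pi_{\Hil_{\Psi}}\psi_j,\wisi_j\rangle = a_j$, and positivity and self-adjointness of $\Gamma_{\Psi}^{-1}$ show $a_j = \langle\pi_{\Hil_{\Psi}}\psi_j,\Gamma_{\Psi}^{-1}\pi_{\Hil_{\Psi}}\psi_j\rangle\ge 0$ is real. Isolating the $i=j$ term, which equals $a_j^2$ because $\langle\wisi_j,\psi_j\rangle = \overline{a_j} = a_j$, yields the key identity $\sum_{i\neq j}|\langle\wisi_j,\psi_i\rangle|^2 = a_j - a_j^2$; in particular $a_j-a_j^2\ge 0$ forces $a_j\in[0,1]$.

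Part (2) is then immediate. If $a_j = 1$, the key identity gives $\langle\wisi_j,\psi_i\rangle = 0$ for every $i\neq j$, whereas $\langle\psi_j,\wisi_j\rangle = 1\neq 0$ shows $\wisi_j\neq 0$. Thus $\wisi_j$ is a nonzero vector orthogonal to $\{\psi_k\}_{k\neq j}$, so this family is incomplete.

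For part (1), suppose $a_j\neq 1$, hence $a_j<1$ by the bound above. I will first note that the analysis operator of $\{\psi_k\}_{k\neq j}$ has exactly the domain $\domain{C_{\Psi}}$ (removing one finite coefficient changes neither square-summability nor its failure), so for $f\notin\domain{C_{\Psi}}$ the coefficient norm is infinite and the lower inequality is trivial; it suffices to treat $f\in\domain{C_{\Psi}}$. For such $f$, using $f\in\Hil_{\Psi}$ and $\wisi_j\in\domain{C_{\Psi}^*C_{\Psi}}=\domain{\Gamma_{\Psi}}$ I obtain the reconstruction identity $\langle f,\psi_j\rangle = \langle f,\pi_{\Hil_{\Psi}}\psi_j\rangle = \langle f,\Gamma_{\Psi}\wisi_j\rangle = \langle C_{\Psi}f, C_{\Psi}\wisi_j\rangle_{\ell^2} = \sum_i\langle f,\psi_i\rangle\langle\psi_i,\wisi_j\rangle$. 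Isolating the $i=j$ term and dividing by $1-a_j\neq 0$ gives $\langle f,\psi_j\rangle = (1-a_j)^{-1}\sum_{i\neq j}\langle f,\psi_i\rangle\langle\psi_i,\wisi_j\rangle$; Cauchy--Schwarz together with $\sum_{i\neq j}|\langle\psi_i,\wisi_j\rangle|^2 = a_j-a_j^2$ then yields $|\langle f,\psi_j\rangle|^2 \le \frac{a_j}{1-a_j}\sum_{i\neq j}|\langle f,\psi_i\rangle|^2$. Substituting this into $\sum_{i\neq j}|\langle f,\psi_i\rangle|^2 = \|C_{\Psi}f\|^2 - |\langle f,\psi_j\rangle|^2 \ge A\|f\|^2 - |\langle f,\psi_j\rangle|^2$ and collecting terms (the factor $1+\frac{a_j}{1-a_j} = \frac{1}{1-a_j}$ appears) gives $\sum_{i\neq j}|\langle f,\psi_i\rangle|^2 \ge A(1-a_j)\|f\|^2$, so $\{\psi_k\}_{k\neq j}$ is a lower frame sequence with bound $A(1-a_j)>0$.

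The main obstacle is the domain bookkeeping in the unbounded setting: one must justify $\wisi_j\in\domain{\Gamma_{\Psi}}$ so that $\Gamma_{\Psi}\wisi_j = \pi_{\Hil_{\Psi}}\psi_j$ is legitimate, and one must establish the reconstruction identity $\langle f,\psi_j\rangle = \sum_i\langle f,\psi_i\rangle\langle\psi_i,\wisi_j\rangle$ for all $f\in\domain{C_{\Psi}}$ even though $\psi_j$ itself need not lie in $\domain{C_{\Psi}}$. This identity plays the role that the frame expansion $\psi_j = \sum_i\langle\psi_j,\wisi_i\rangle\psi_i$ plays in the classical bounded argument, but here it must be read off from the closed form $\Omega_{\Psi}$ rather than from a convergent series. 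Once these form-theoretic identities are secured, the remaining estimates are exactly the Cauchy--Schwarz computations familiar from ordinary frames.
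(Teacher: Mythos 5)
Your proof is correct, and its overall architecture is the same as the paper's: reduce everything to the scalar $a_j$, treat $a_j=1$ by exhibiting $\wisi_j$ as a nonzero vector orthogonal to $\{\psi_k\}_{k\neq j}$, and treat $a_j\neq 1$ by bounding $|\langle f,\psi_j\rangle|^2$ via Cauchy--Schwarz and absorbing it into the lower frame inequality; your bound $A(1-a_j)$ is exactly the paper's $\frac{A}{1+M}$, since $M=\frac{a_j}{1-a_j}$. The genuine difference is how the central quadratic identity is obtained. The paper applies Lemma \ref{minnorm} (the minimum-norm property of the canonical dual coefficients) to the two weak representations $\psi_j=\sum_k\langle\psi_j,\wisi_k\rangle\psi_k$ (from \eqref{weakgamma}) and $\psi_j=\sum_k\delta_{jk}\psi_k$, which yields \eqref{aj=1}; you instead derive $\sum_{i\neq j}|\langle\wisi_j,\psi_i\rangle|^2=a_j-a_j^2$ directly from $\Gamma_{\Psi}\wisi_j=\pi_{\Hil_{\Psi}}\psi_j$ together with the positivity and self-adjointness of $\Gamma_{\Psi}^{-1}$ (Lemma \ref{invertgeneral} plus Kato's representation theorem), essentially inlining the computation that underlies Lemma \ref{minnorm} for this special case. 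Your route buys two things the paper's proof leaves implicit: it establishes that $a_j$ is real with $0\le a_j\le 1$ (the paper silently sidesteps this by writing $|1-a_j|^2$), and it works with the coefficients $\langle\wisi_j,\psi_i\rangle$ directly, so the orthogonality needed for incompleteness in part (2) is immediate --- whereas the paper concludes $\langle\psi_j,\wisi_k\rangle=0$ and then asserts $\wisi_j\in\left(\overline{\mathrm{span}}\{\psi_k\}_{k\neq j}\right)^{\perp}$, a step that tacitly uses the same symmetry $\langle\psi_j,\wisi_k\rangle=\langle\wisi_j,\psi_k\rangle$ that you prove. What the paper's route buys is economy and reusability: it exhibits Lemma \ref{minnorm} as a tool and mirrors the classical frame argument of \cite[Theorem 5.4.7]{ole1n} verbatim. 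Your domain bookkeeping ($\domain{C_{\Psi^{(j)}}}=\domain{C_{\Psi}}$, $\wisi_j\in\domain{\Gamma_{\Psi}}$, and the form-theoretic reconstruction identity for $f\in\domain{C_{\Psi}}$) is sound throughout.
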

\begin{proof}
Similar to \cite[Theorem 5.4.7]{ole1n}, assume that $a_k=\left<\psi_j, \wisi_k\right>$. Then  $\psi_j=\sum_{k\in I}\left< \psi_j, \wisi_k\right>\psi_k$, in the weak sense. On the other hand $\psi_j=\sum_{i\in I}\delta_{j,k}\psi_k$ in the  weak sense. By Lemma \ref{minnorm}, we have 
\begin{eqnarray}\label{aj=1}
1=|a_j|^2+\sum_{k\neq j}|a_k|^2+|a_j-1|^2+\sum_{k\neq j}|a_k|^2.
\end{eqnarray}
Assume that $a_j=1$ then by \eqref{aj=1} we have $\sum_{k\neq j}|a_k|^2= 0$ and so $\left<\psi_j, \wisi_k\right>=0$, for all $k\neq j$. Then $\widetilde{\psi}_j\in {\overline{\text{span}}\{\psi_k\}}^{\perp}$ and so $\wisi_j\neq 0$ (as
 $\left<\psi_j, \wisi_j\right>=1$, for all $j\in I$). Therefore, $\{\psi_k\}_{k\neq j}$ is incomplete.
 
 Now assume that $a_j\neq 1$. Notice  $g\in \domain{C_{\Psi}}$ if and only if  $g\in \domain{C_{\Psi ^{(j)}}}$, where $\Psi^{(j)}=\{\psi_k\}_{k\neq j}$. Then for all $g\in \domain{C_{\Psi}}$ we get
 \begin{eqnarray*}
   \left|\left<g,\psi_j\right>\right|^2&=&\left|\left<g,\frac{1}{1-a_j}\sum_{k\neq j}a_k\psi_k\right>\right|^2\leq M\sum_{k\neq j}\left|\left<g,\psi_k\right>\right|^2,
 \end{eqnarray*}
 where $M=\frac{1}{|1-a_j|^2}\sum_{k\neq j}|a_k|^2$. Then
 \begin{eqnarray*}
 \sum_{k\neq j}\left|\left<g,\psi_k\right>\right|^2&\geq& \frac{1}{M+1}\left(\sum_{k\neq j}\left|\left<g,\psi_k\right>\right|^2+\left|\left<g,\psi_j\right>\right|^2\right)\\
 &=& \frac{1}{M+1}\sum_{k\in I}\left|\left<g,\psi_k\right>\right|^2\geq \frac{A}{1+M}\|g\|^2,
 \end{eqnarray*}
 where $A$ is a lower bound of $\Psi$.    
Hence, $\Psi^{(j)}$ is a lower frame sequence for $\Hil$.

\end{proof}
\begin{corollary}\label{ex=bio}
If $\Psi$ is an exact lower frame sequence, then $\psi$ has a (unique) biorthogonal sequence and so is minimal.
\end{corollary}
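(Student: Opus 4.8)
The plan is to show that the canonical dual $\wiSi$ is itself the desired biorthogonal sequence, using Theorem~\ref{ex=inco} to pin down the relevant inner products. First I would recall that every lower frame sequence is complete, so that $\overline{\textrm{span}}\{\psi_i\}_{i\in I}=\Hil$; consequently, for such a complete sequence, exactness of $\Psi$ is equivalent to the statement that $\{\psi_k\}_{k\neq j}$ is incomplete for every $j\in I$ (removing any element must strictly shrink the closed span, hence away from all of $\Hil$).

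Next I would invoke the dichotomy of Theorem~\ref{ex=inco}: for each fixed $j$, either $\left<\psi_j,\wisi_j\right>\neq 1$, in which case $\{\psi_k\}_{k\neq j}$ is again a lower frame sequence for $\Hil$ and hence complete, or $\left<\psi_j,\wisi_j\right>=1$, in which case $\{\psi_k\}_{k\neq j}$ is incomplete. Since exactness rules out the first alternative for every $j$, I conclude that $\left<\psi_j,\wisi_j\right>=1$ for all $j\in I$. To upgrade this to full biorthogonality I would reuse the computation inside the proof of Theorem~\ref{ex=inco}: with $a_k=\left<\psi_j,\wisi_k\right>$, comparing the two weak representations of $\psi_j$ (namely $\psi_j=\sum_{k}\left<\psi_j,\wisi_k\right>\psi_k$ from \eqref{weakgamma} and $\psi_j=\sum_k\delta_{jk}\psi_k$) via the minimal-norm identity of Lemma~\ref{minnorm} and \eqref{aj=1}, the assumption $a_j=1$ forces $\sum_{k\neq j}|a_k|^2=0$, i.e. $\left<\psi_j,\wisi_k\right>=0$ for $k\neq j$. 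Together with $a_j=1$ this gives $\left<\psi_i,\wisi_j\right>=\delta_{ij}$ for all $i,j\in I$, so that $\Phi:=\wiSi$ is biorthogonal to $\Psi$.

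Finally, uniqueness and minimality are routine. Uniqueness follows from completeness of $\Psi$: if $\Phi$ and $\Phi'$ are both biorthogonal, then $\phi_j-\phi_j'\perp\psi_i$ for every $i$, hence $\phi_j-\phi_j'\in\left(\overline{\textrm{span}}\{\psi_i\}_{i\in I}\right)^{\perp}=\{0\}$, so $\phi_j=\phi_j'$. Minimality is immediate, either from Corollary~\ref{ex=min} (exact $\Leftrightarrow$ minimal) or directly: since $\phi_j=\wisi_j$ is orthogonal to every $\psi_i$ with $i\neq j$ yet satisfies $\left<\psi_j,\phi_j\right>=1$, we have $\psi_j\notin\overline{\textrm{span}}\{\psi_i\}_{i\neq j}$.

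The main subtlety I expect is the clean passage from the combinatorial notion of exactness to the analytic diagonal condition $\left<\psi_j,\wisi_j\right>=1$ through Theorem~\ref{ex=inco}, and in particular recognizing that the off-diagonal vanishing $\left<\psi_j,\wisi_k\right>=0$ $(k\neq j)$ is already produced inside that theorem's proof, so that the canonical dual is the biorthogonal system rather than merely a dual with the weak reconstruction property.
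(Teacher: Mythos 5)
Your proposal is correct and follows essentially the same route as the paper: exactness plus the dichotomy of Theorem~\ref{ex=inco} forces $\left<\psi_j,\wisi_j\right>=1$ for every $j$, the computation inside that theorem's proof (via \eqref{aj=1}) then kills the off-diagonal terms, so the canonical dual $\wiSi$ is the biorthogonal sequence. The only cosmetic difference is that you verify uniqueness (via completeness) and minimality directly, where the paper cites the standard lemma from Christensen's book; both are routine and equivalent.
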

\begin{proof}
Assume that $\Psi$  is an exact lower frame sequence in $\Hil$, then $\left<\psi_j, \wisi_j\right>= 1$ by Theorem    
\ref{ex=inco} and also the proof shows that $\left<\psi_j, \wisi_k\right>= \delta_{j,k}$ that leads to the minimality of $\Psi$ by \cite[Lemma 3.3.1]{ole1n}.  Also, by \cite[Lemma 3.3.1]{ole1n}, the biorthogonal sequence is unique because $\Psi$ is a complete sequence.  
\end{proof}
\begin{corollary}
Let $\Psi$ be  an exact lower frame  sequence in $\Hil$. Then $\wiSi=\widetilde{\pi_{\Hil_{\Psi}}\Psi}$.
\end{corollary}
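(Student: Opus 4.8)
The plan is to reduce the statement to an identity between the two bounded inverses $\Gamma_{\Psi}^{-1}$ and $\Gamma_{\pi_{\Hil_{\Psi}}\Psi}^{-1}$. First I would recall that the canonical dual of a lower frame sequence $\Phi$ has $i$-th element $\Gamma_{\Phi}^{-1}\pi_{\Hil_{\Phi}}\phi_i$, and apply this to $\Phi=\pi_{\Hil_{\Psi}}\Psi$ regarded as a sequence in $\Hil$. This requires identifying the space $\Hil_{\pi_{\Hil_{\Psi}}\Psi}$: by \eqref{dom=C} we have $\domain{C_{\pi_{\Hil_{\Psi}}\Psi}}=\domain{C_{\Psi}}\oplus\Hil_{\Psi}^{\perp}$, whose closure is $\Hil_{\Psi}\oplus\Hil_{\Psi}^{\perp}=\Hil$, so $\Hil_{\pi_{\Hil_{\Psi}}\Psi}=\Hil$ and the outer projection in the dual formula is the identity. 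Since $\Psi$ is a lower frame sequence, Corollary \ref{mathDlowriesz} guarantees that $\pi_{\Hil_{\Psi}}\Psi$ is a lower frame sequence in $\Hil$, so by Lemma \ref{invertgeneral} the operator $\Gamma_{\pi_{\Hil_{\Psi}}\Psi}$ is boundedly invertible on $\Hil$. Hence $\widetilde{\pi_{\Hil_{\Psi}}\Psi}$ has $i$-th element $\Gamma_{\pi_{\Hil_{\Psi}}\Psi}^{-1}\pi_{\Hil_{\Psi}}\psi_i$, while $\wisi_i=\Gamma_{\Psi}^{-1}\pi_{\Hil_{\Psi}}\psi_i$.

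It then suffices to show that $\Gamma_{\Psi}^{-1}$ and $\Gamma_{\pi_{\Hil_{\Psi}}\Psi}^{-1}$ agree on $\Hil_{\Psi}$, and to specialize to $g=\pi_{\Hil_{\Psi}}\psi_i\in\Hil_{\Psi}$. The engine is the operator inclusion $\Gamma_{\Psi}=\Gamma_{\Psi}^{r}\subseteq\Gamma_{\pi_{\Hil_{\Psi}}\Psi}$, which follows from \eqref{eq:restrictgamma1} together with Proposition \ref{com:=Ga}. Given $g\in\Hil_{\Psi}$, I would set $h:=\Gamma_{\Psi}^{-1}g\in\domain{\Gamma_{\Psi}}\subseteq\Hil_{\Psi}$, so that $\Gamma_{\Psi}h=g$. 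The inclusion then yields $h\in\domain{\Gamma_{\pi_{\Hil_{\Psi}}\Psi}}$ with $\Gamma_{\pi_{\Hil_{\Psi}}\Psi}h=\Gamma_{\Psi}h=g$; applying the bounded inverse $\Gamma_{\pi_{\Hil_{\Psi}}\Psi}^{-1}$ gives $h=\Gamma_{\pi_{\Hil_{\Psi}}\Psi}^{-1}g$, i.e. $\Gamma_{\Psi}^{-1}g=\Gamma_{\pi_{\Hil_{\Psi}}\Psi}^{-1}g$. Taking $g=\pi_{\Hil_{\Psi}}\psi_i$ gives $\wisi_i=\widetilde{(\pi_{\Hil_{\Psi}}\Psi)}_i$ for every $i$, which is the asserted equality $\wiSi=\widetilde{\pi_{\Hil_{\Psi}}\Psi}$.

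The step I expect to require the most care is the bookkeeping of the ambient Hilbert spaces: $\Gamma_{\Psi}$ acts on $\Hil_{\Psi}$ whereas $\Gamma_{\pi_{\Hil_{\Psi}}\Psi}$ acts on all of $\Hil$, so the inclusion $\Gamma_{\Psi}\subseteq\Gamma_{\pi_{\Hil_{\Psi}}\Psi}$ must be read as an inclusion of graphs inside $\Hil\oplus\Hil$, and one must check that the value $g=\Gamma_{\Psi}h$ lands in $\Hil_{\Psi}$ so that the two inverses are genuinely compared on the same data. Everything else is routine. It is worth noting that this argument uses only that $\Psi$ is a lower frame sequence; exactness is not actually needed for the equality itself, although it is the hypothesis under which the canonical dual is biorthogonal (Corollary \ref{ex=bio}), which is the context in which this identity is of interest.
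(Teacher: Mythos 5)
Your proof is correct, and it reaches the identity by a genuinely different key step than the paper, although both arguments share the same engine: the inclusion $\Gamma_{\Psi}=\Gamma_{\Psi}^r\subseteq\Gamma_{\pi_{\Hil_{\Psi}}\Psi}$ from \eqref{eq:restrictgamma1} and Proposition \ref{com:=Ga}, which forces the two inverses to agree wherever both are defined, so that everything reduces to putting the vectors $\pi_{\Hil_{\Psi}}\psi_i$ into the right set. The paper does this by actually using exactness: via Corollary \ref{ex=bio} it gets biorthogonality $\left<\psi_i,\wisi_j\right>=\delta_{ij}$, hence $C_{\Psi}^r\wisi_i=\delta_i$ and $\Gamma_{\Psi}^r\wisi_i=D_{\Psi}^rC_{\Psi}^r\wisi_i=D_{\Psi}^r\delta_i=\pi_{\Hil_{\Psi}}\psi_i$, exhibiting each $\pi_{\Hil_{\Psi}}\psi_i$ as an explicit element of $\range{\Gamma_{\Psi}^r}$. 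You instead invoke the parenthetical clause of Lemma \ref{invertgeneral}, $\domain{\Gamma_{\Psi}^{-1}}=\range{\Gamma_{\Psi}}=\Hil_{\Psi}$, which makes that range membership automatic and gives agreement of $\Gamma_{\Psi}^{-1}$ and $\Gamma_{\pi_{\Hil_{\Psi}}\Psi}^{-1}$ on all of $\Hil_{\Psi}$ at once. Your route buys two things: the conclusion holds for every lower frame sequence, so exactness is indeed superfluous for the equality itself (it only matters for making the dual biorthogonal, i.e. for the context in which the identity is used), and you make explicit the bookkeeping the paper glosses over, namely that $\Hil_{\pi_{\Hil_{\Psi}}\Psi}=\Hil$ by \eqref{dom=C} (so the outer projection in the dual formula for $\pi_{\Hil_{\Psi}}\Psi$ is the identity) and that $\pi_{\Hil_{\Psi}}\Psi$ is a lower frame sequence in $\Hil$ by Corollary \ref{mathDlowriesz}, without which the right-hand side $\widetilde{\pi_{\Hil_{\Psi}}\Psi}$ is not even defined. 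What the paper's route buys is that it produces a preimage by hand rather than leaning on the surjectivity clause $\range{\Gamma_{\Psi}}=\Hil_{\Psi}$ of Lemma \ref{invertgeneral}; but this caution is arguably redundant, since the very definition of $\wiSi$ and the paper's own Proposition \ref{rec:form} already presuppose that $\Gamma_{\Psi}^{-1}$ is defined on all of $\Hil_{\Psi}$. The one dependency you should flag honestly is that both your argument and the paper's stand or fall with Corollary \ref{mathDlowriesz}(1) and Lemma \ref{invertgeneral} applied to $\pi_{\Hil_{\Psi}}\Psi$, since that is what makes $\Gamma_{\pi_{\Hil_{\Psi}}\Psi}$ boundedly invertible on $\Hil$.
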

\begin{proof}
 Using Proposition \ref{com:=Ga}, $\Gamma_{\pi_{\Hil_{\Psi}}\Psi}^{-1}=\left(\Gamma_{\Psi}^r\right)^{-1}$ on $\range{\Gamma_{\Psi}^r}$. So, it is enough to show that $\pi_{\Hil_{\Psi}}\Psi \in \range{\Gamma_{\Psi}^r}$. For all $i\in I$,
 $$\wisi_i=\left(\Gamma_{\Psi}^r\right)^{-1}\pi_{\Hil_{\Psi}}\psi_i\in \domain{\Gamma_{\Psi}^r}\subseteq \domain{C_{\Psi}}=\domain{C_{\Psi}}$$
  and applying Corollary \ref{ex=bio}, $\left<\psi_i,\wisi_j\right>=\delta_{ij}$, for all $i,j\in I$. Then 
  $$D_{\Psi}^r C_{\Psi}^r \wisi_i=D_{\Psi}^r \delta_{ij}=\pi_{\Hil_{\Psi}}\psi_i$$
   and therefore $\pi_{\Hil_{\Psi}}\Psi\in \range{\Gamma_{\Psi}^r}$.
\end{proof}

In the following we state  that a minimal sequence is a Riesz-Fischer sequence when the synthesis operator is closable and surjective.
\begin{proposition}\label{minRiesz}
Let $\Psi$ be a sequence in $\Hil$. 
The following are equivalent.
\begin{enumerate}
    \item $\Psi$ is a Riesz-Fischer sequence. 
    \item  $\Psi$ is minimal   and $D_{\Psi}^r$ has closed range.
\end{enumerate} 

\end{proposition}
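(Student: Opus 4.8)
The plan is to reduce both conditions to statements about the closed, densely defined operator $\overline{D_{\Psi}^r}=(C_{\Psi}^r)^*=C_{\Psi}^{\times}$ and then match them term by term. By Corollary \ref{mathDlowriesz}(2), assertion (1) is equivalent to $\overline{D_{\Psi}^r}$ being injective and having closed range. Hence it suffices to establish two separate correspondences: injectivity of $\overline{D_{\Psi}^r}$ corresponds to minimality of $\Psi$, and closedness of $\range{\overline{D_{\Psi}^r}}$ corresponds to closedness of $\range{D_{\Psi}^r}$. I would treat these two ingredients independently and then assemble the two implications.

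For the injectivity--minimality link I would use that a sequence is minimal if and only if it admits a biorthogonal sequence (\cite[Lemma 3.3.1]{ole1n}). First I record the auxiliary identity $\range{C_{\Psi}}=\range{C_{\Psi}^r}$: the inclusion $C_{\Psi}^r\subseteq C_{\Psi}$ from Proposition \ref{com:=op}(4) gives $\range{C_{\Psi}^r}\subseteq\range{C_{\Psi}}$, while for $f\in\domain{C_{\Psi}}$ the relation $C_{\Psi}\subseteq C_{\pi_{\Hil_{\Psi}}\Psi}=C_{\Psi}^r\pi_{\Hil_{\Psi}}$ (Proposition \ref{com:=op}(4) and \eqref{Cpisi:Csi}) yields $C_{\Psi}f=C_{\Psi}^r\pi_{\Hil_{\Psi}}f\in\range{C_{\Psi}^r}$. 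Now if $\Psi$ is minimal with biorthogonal sequence $\{\phi_j\}$, then $C_{\Psi}\phi_j=\{\langle\phi_j,\psi_i\rangle\}_i=\delta_j$ (note $\phi_j\in\domain{C_{\Psi}}$ automatically, since its coefficients equal $\delta_j\in\ell^2$); thus every $\delta_j$ lies in $\range{C_{\Psi}}=\range{C_{\Psi}^r}$, so $\range{C_{\Psi}^r}$ is dense, and by Proposition \ref{4propertdens}(4) one gets $\kernel{\overline{D_{\Psi}^r}}=\kernel{(C_{\Psi}^r)^*}=(\range{C_{\Psi}^r})^{\perp}=\{0\}$, i.e. $\overline{D_{\Psi}^r}$ is injective. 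Conversely, if $\Psi$ is a Riesz-Fischer sequence then $C_{\Psi}$ (equivalently $C_{\Psi}^r$) is surjective (\cite[Proposition 4.1]{xxlstoeant11}); solving $C_{\Psi}\phi_j=\delta_j$ produces a biorthogonal sequence, so $\Psi$ is minimal.

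For the closed-range link I would use that the restricted synthesis operator is itself closed: being a genuine synthesis operator of $\pi_{\Hil_{\Psi}}\Psi$ in $\Hil_{\Psi}$, Proposition \ref{xxlstoeant11}(2) gives $D_{\Psi}^r=\overline{D_{\Psi}^r}$, so $\range{D_{\Psi}^r}$ is closed precisely when $\range{\overline{D_{\Psi}^r}}$ is. Assembling the two directions: if (1) holds, Corollary \ref{mathDlowriesz}(2) gives $\overline{D_{\Psi}^r}$ injective with closed range, hence $\range{D_{\Psi}^r}$ is closed, and surjectivity of $C_{\Psi}$ gives minimality, so (2) follows. If (2) holds, minimality yields injectivity of $\overline{D_{\Psi}^r}$ by the previous paragraph, closedness of $\range{D_{\Psi}^r}$ yields closedness of $\range{\overline{D_{\Psi}^r}}$, and Corollary \ref{mathDlowriesz}(2) then delivers (1).

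The main obstacle I anticipate is exactly the closed-range transfer between $D_{\Psi}^r$ and its closure. For the implication (1)$\Rightarrow$(2) one cannot argue by density alone: even when $\overline{D_{\Psi}^r}$ has closed range, \eqref{ran=clo} only guarantees $\range{D_{\Psi}^r}$ to be dense in it, which does not by itself force closedness. The clean resolution is the observation that $D_{\Psi}^r$ is already closed, so no proper closure is taken and the two ranges coincide; I would therefore make that point explicit at the outset. A secondary care-point is the identity $\range{C_{\Psi}}=\range{C_{\Psi}^r}$ and the verification that the biorthogonal elements automatically lie in $\domain{C_{\Psi}}$, both of which are handled above but should be stated carefully since $\{\phi_j\}$ need not lie in $\Hil_{\Psi}$.
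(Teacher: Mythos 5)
Your overall architecture is the same as the paper's: reduce (1) to ``$\overline{D_{\Psi}^r}$ is injective with closed range'' via Corollary \ref{mathDlowriesz}(2), identify injectivity of $\overline{D_{\Psi}^r}$ with minimality through biorthogonal sequences, and treat the closed-range condition separately. Your injectivity--minimality link is correct and is essentially the paper's argument in dual form: the paper pairs $c\in\kernel{\overline{D_{\Psi}^r}}$ against the biorthogonal vectors $g_j$ (using $C_{\Psi}g_j=\delta_j$) to conclude $c_j=0$ for all $j$, which is exactly your observation that $\range{C_{\Psi}^r}$ contains all $\delta_j$, hence is dense, hence $\kernel{(C_{\Psi}^r)^*}=\{0\}$; the converse via surjectivity of $C_{\Psi}$ matches the paper's citation of \cite[Theorem 3.2]{casoleli1}. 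The genuine gap is the step you yourself single out as the main obstacle: the claim that $D_{\Psi}^r$ is closed, so that $D_{\Psi}^r=\overline{D_{\Psi}^r}$. Proposition \ref{xxlstoeant11}(2) does not give this (despite its garbled wording, the true statement concerns $C_{\Psi}$): item (4) of that very proposition characterizes \emph{when} $D_{\Psi}$ is closed, Section \ref{sec:restframop0} asserts only that $D_{\Psi}^r$ is ``always closable'', and the paper's stated motivation is precisely that synthesis operators fail to be closed. If $D_{\Psi}^r$ were always closed, the operator $\overline{D_{\Psi}^r}$ used throughout the paper would be redundant notation.

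The damage is one-sided. For (2)$\Rightarrow$(1) no closedness is needed: if $\range{D_{\Psi}^r}$ is closed, then \eqref{ran=clo} gives $\range{D_{\Psi}^r}\subseteq\range{\overline{D_{\Psi}^r}}\subseteq\overline{\range{D_{\Psi}^r}}=\range{D_{\Psi}^r}$, so $\overline{D_{\Psi}^r}$ has closed range; with this substitution your proof of that direction is complete and coincides with the paper's. For (1)$\Rightarrow$(2), however, the gap cannot be closed as you state it: a Riesz-Fischer sequence only forces $\overline{D_{\Psi}^r}$ to have closed range, and $\range{D_{\Psi}^r}$ can be a proper dense subspace of it. Concretely, take an orthonormal basis $\{e_i\}\cup\{g_n\}$ of $\Hil$, the index set partitioned into blocks $B_n$ with $|B_n|=2n^2$, and $\psi_i=e_i+g_n$ for $i\in B_n$. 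For finite $c$ one has $\|D_{\Psi}c\|^2=\|c\|^2+\sum_n\left|\sum_{i\in B_n}c_i\right|^2\geq\|c\|^2$, so $\Psi$ is Riesz-Fischer; $\{e_i\}$ is biorthogonal to $\Psi$ and $C_{\Psi}$ is densely defined, so $\Hil_{\Psi}=\Hil$ and $D_{\Psi}^r=D_{\Psi}$. Now let $c_i=n^{-2}$ on the first half of $B_n$ and $c_i=-n^{-2}$ on the second half: then $c\in\ell^2$, the block truncations $c^{(N)}$ satisfy $D_{\Psi}c^{(N)}\to\sum_i c_ie_i$, but $\sum_i c_i\psi_i$ itself does not converge (its partial sums move by $g_n$ inside block $n$), so by injectivity of $\overline{D_{\Psi}}$ the vector $\sum_i c_ie_i$ lies in $\overline{\range{D_{\Psi}}}\setminus\range{D_{\Psi}}$, i.e. $\range{D_{\Psi}^r}$ is not closed. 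Thus (1) holds while the literal condition (2) fails. Note that the paper's own proof of (1)$\Rightarrow$(2) (the citation of Corollary \ref{eq:clD to psi}) also only yields closed range of $\overline{D_{\Psi}^r}$; the proposition is correct --- and is what both your argument and the paper's actually prove --- when condition (2) is read with $\overline{D_{\Psi}^r}$ in place of $D_{\Psi}^r$.
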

\begin{proof}
$(1)\Rightarrow (2)$ is proved in \cite[Theorem 3.2]{casoleli1} and Corollary \ref{eq:clD to psi}. \\
$(2)\Rightarrow (1)$ 
 Let $\Psi$ be a minimal sequence then it has a biorthogonal sequence 
 $\{g_j\}_{j\in I}$ and for all $j\in I$ we get
\begin{eqnarray}\label{biodomC}
e_j=\left\{\left< g_j,\psi_i\right>\right\}_{i\in I}=C_{\Psi}g_j
\end{eqnarray}
and then $g_j\in \domain{C_{\Psi}}$. Also,
$\overline{D_{\Psi}^r}$ is injective. Indeed, if $c\in \domain{\overline{D_{\Psi}^r}}$ and $\overline{D_{\Psi}^r}c=0$. Then for all $j\in I$ we obtain
\begin{eqnarray*}
0&=& \left< \overline{D_{\Psi}^r}c, g_j\right>\\
&=& \left< \{c_i\}_{i\in I} , C_{\Psi}g_j\right>\\
&=&\left< \{c_i\}_{i\in I}, \left\{\left<g_j,\psi_i\right>\right\}_{i\in I}\right>\\
&=& \sum_{i\in I}\left< c_i, \left<g_j,\psi_i\right>\right>=c_j
\end{eqnarray*}
and so $c_j=0$ for all $j\in I$. Hence by Corollary \ref{eq:clD to psi}, $\Psi$ is a Riesz-Fischer sequence.

\end{proof}
Now, we are ready to show that every minimal lower frame sequence is a complete Riesz-Fischer sequence in $\Hil$.
 \begin{corollary}\label{RC=LM}
Every minimal lower frame sequence in $\Hil$ is a complete Riesz-Fischer sequence in $\Hil$.
 \end{corollary}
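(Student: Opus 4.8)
The plan is to show directly that $\overline{D_{\Psi}^r}$ is injective and has closed range, which by Corollary \ref{mathDlowriesz} (2) is equivalent to $\Psi$ being a Riesz-Fischer sequence; completeness is then free, since every lower frame sequence is complete (as noted right after the definition of lower frame sequences). I deliberately route the argument through the \emph{closure} $\overline{D_{\Psi}^r}$ rather than through Proposition \ref{minRiesz}: the latter would force me to verify that $D_{\Psi}^r$ itself (not its closure) has closed range, and for a non-closed operator $\range{D_{\Psi}^r}$ need not be closed even when $\range{\overline{D_{\Psi}^r}}$ is, so that route would be awkward here.

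For the closed range I would invoke the lower frame hypothesis: by Corollary \ref{mathDlowriesz} (1), $\Psi$ being a lower frame sequence is equivalent to $\overline{D_{\Psi}^r}$ being surjective onto $\Hil_{\Psi}$, whence $\range{\overline{D_{\Psi}^r}} = \Hil_{\Psi}$ is closed. For injectivity I would use minimality. Since $\Psi$ is minimal it is exact (Corollary \ref{ex=min}), so by Corollary \ref{ex=bio} the canonical dual $\wiSi \subseteq \Hil_{\Psi}$ is a biorthogonal sequence, $\langle \psi_i, \wisi_j \rangle = \delta_{ij}$. Because each $\wisi_j = \Gamma_{\Psi}^{-1}\pi_{\Hil_{\Psi}}\psi_j$ lies in $\domain{\Gamma_{\Psi}} \subseteq \domain{C_{\Psi}^r}$ and belongs to $\Hil_{\Psi}$, a short computation gives $C_{\Psi}^r \wisi_j = \{\langle \wisi_j, \pi_{\Hil_{\Psi}}\psi_k\rangle\}_k = \{\langle \wisi_j, \psi_k\rangle\}_k = \delta_j$, the $j$-th canonical basis vector of $\ell^2$. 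Then, using \eqref{asso-seq} in the form $\overline{D_{\Psi}^r} = (C_{\Psi}^r)^*$, for any $c \in \kernel{\overline{D_{\Psi}^r}}$ and every $j$ I obtain $0 = \langle \overline{D_{\Psi}^r} c, \wisi_j\rangle = \langle c, C_{\Psi}^r \wisi_j\rangle = \langle c, \delta_j\rangle = c_j$, so $c = 0$ and $\overline{D_{\Psi}^r}$ is injective.

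Combining the two facts, $\overline{D_{\Psi}^r}$ is injective with closed range, and Corollary \ref{mathDlowriesz} (2) yields that $\Psi$ is a Riesz-Fischer sequence; together with completeness this is exactly the claim. The main bookkeeping obstacle is the passage between $\Hil$ and $\Hil_{\Psi}$: one must check that the biorthogonal elements can be taken \emph{inside} $\Hil_{\Psi}$ (which is precisely what Corollary \ref{ex=bio} provides, identifying them with $\wiSi$), so that the projection $\pi_{\Hil_{\Psi}}$ disappears in the pairing $\langle \wisi_j, \psi_k\rangle$ and $C_{\Psi}^r \wisi_j$ is genuinely $\delta_j$. Once the biorthogonal sequence is known to sit in $\Hil_{\Psi}$, the domain condition $\wisi_j \in \domain{C_{\Psi}^r}$ and the adjoint identity $\overline{D_{\Psi}^r} = (C_{\Psi}^r)^*$ make the injectivity computation routine.
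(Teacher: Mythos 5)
Your argument is correct, and its skeleton coincides with the paper's: show that $\overline{D_{\Psi}^r}$ is injective and has closed range (the latter from surjectivity via Corollary \ref{mathDlowriesz}(1)), conclude the Riesz-Fischer property from Corollary \ref{mathDlowriesz}(2), and note completeness is automatic for lower frame sequences. The one genuine point of divergence is where the biorthogonal sequence for the injectivity step comes from. The paper invokes the \emph{proof} of Proposition \ref{minRiesz}: minimality alone yields an abstract biorthogonal sequence $\{g_j\}_{j\in I}$, and biorthogonality by itself forces $C_{\Psi}g_j=\delta_j\in\ell^2$, hence $g_j\in\domain{C_{\Psi}}\subseteq\Hil_{\Psi}$, after which the pairing computation $0=\left\langle \overline{D_{\Psi}^r}c,g_j\right\rangle=\left\langle c, C_{\Psi}g_j\right\rangle=c_j$ runs exactly as yours does. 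You instead take the canonical dual $\wiSi$ as the biorthogonal sequence, via Corollaries \ref{ex=min} and \ref{ex=bio}; this is valid, and your verifications that $\wisi_j\in\domain{\Gamma_{\Psi}}\subseteq\domain{C_{\Psi}^r}$, that $\wisi_j\in\Hil_{\Psi}$ so the projection drops out of the pairing, and that $C_{\Psi}^r\wisi_j=\delta_j$ are all sound. The trade-off: your route drags in the machinery behind Corollary \ref{ex=bio} (Theorem \ref{ex=inco}, hence Lemma \ref{minnorm}), which is only available because $\Psi$ is a lower frame sequence, whereas the paper's injectivity step needs nothing beyond the elementary fact that minimal sequences admit biorthogonal sequences; in exchange, your version is more explicit, identifying the biorthogonal family concretely as $\wiSi$. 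Finally, your reason for bypassing the \emph{statement} of Proposition \ref{minRiesz} (closed range of $D_{\Psi}^r$ itself versus of its closure) is well taken, and it is precisely why the paper cites only the proof of that proposition, for injectivity, rather than its statement.
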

\begin{proof}
Assume that $\Psi$ is a minimal lower frame sequence in $\Hil$. It is enough to show that $\Psi$ is a Riesz-Fischer sequence in $\Hil$ since every lower frame sequence is complete.  By the proof of Proposition \ref{minRiesz}, $\overline{D_{\Psi}^r}$ is injective. Moreover,  Corollary \ref{mathDlowriesz} shows that $\overline{D_{\Psi}^r}$ is surjective and so it has closed range. 
Again Corollary \ref{mathDlowriesz} demonstrates $\Psi$ is a Riesz-Fischer sequence in $\Hil$.
\end{proof}

Also, by \cite[Proposition 4.1 (e)]{xxlstoeant11} we have
\begin{proposition}
Let $\Psi$ be a sequence in $\Hil$. The following are equivalent:
\begin{enumerate}
    \item $\Psi$ is a lower frame sequence in $\Hil$.
    \item $\Psi$ is  complete and $C_{\Psi}$ has closed range.
\end{enumerate}
\end{proposition}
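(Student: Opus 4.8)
The plan is to recast both statements as properties of the single operator $C_{\Psi}$ and then invoke the characterization of bounded-below closed operators from Proposition \ref{prop:injcloran1}. The crucial structural fact I would use at the outset is that $C_{\Psi}$ is \emph{always} closed for an arbitrary sequence $\Psi$ (Proposition \ref{xxlstoeant11} (2)); this is what makes Proposition \ref{prop:injcloran1} applicable with no extra hypotheses, and it is the reason the synthesis/analysis dichotomy mentioned in the introduction forces us onto the analysis side here.

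First I would translate the lower frame condition directly into the language of Section 3: the defining inequality $\sqrt{A}\,\|f\|\le \|C_{\Psi}f\|$ for all $f\in\domain{C_{\Psi}}$ is, verbatim, the statement that $C_{\Psi}$ is bounded from below (bb) with $m=\sqrt{A}$. Next I would translate completeness into injectivity of $C_{\Psi}$. The key observation is that $\kernel{C_{\Psi}}=\left(\overline{\textrm{span}}\{\psi_i\}\right)^{\perp}$: if $f$ satisfies $\langle f,\psi_i\rangle=0$ for all $i$, then $C_{\Psi}f=0\in\ell^2$, so such an $f$ automatically lies in $\domain{C_{\Psi}}$ and in its kernel. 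Hence a non-zero $f$ with $\sum_{i\in I}|\langle f,\psi_i\rangle|^2=0$ exists precisely when $\kernel{C_{\Psi}}\neq\{0\}$, so $\Psi$ is complete if and only if $C_{\Psi}$ is injective.

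With these two dictionary entries in hand, the equivalence is immediate: since $C_{\Psi}$ is closed, Proposition \ref{prop:injcloran1} gives that $C_{\Psi}$ is (bb) if and only if $C_{\Psi}$ is injective and has closed range. Reading this through the two translations yields $(1)\Leftrightarrow(2)$. I do not expect a serious obstacle here; the only point requiring a moment's care is the completeness-equals-injectivity step, specifically the verification that a vector orthogonal to every $\psi_i$ genuinely belongs to $\domain{C_{\Psi}}$ (so that it is a kernel element and not merely an element outside the domain). This is handled by the remark already made in the excerpt that $\{\langle f,\psi_k\rangle\}_{k}$ is always defined pointwise and that membership in $\domain{C_{\Psi}}$ is exactly finiteness of $\|C_{\Psi}f\|$, which is trivially satisfied by the zero coefficient sequence.
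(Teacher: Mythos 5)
Your proof is correct, but it does not follow the paper's route for the simple reason that the paper has no internal proof of this proposition at all: the statement is introduced with ``Also, by \cite[Proposition 4.1 (e)]{xxlstoeant11} we have'' and is left as a quotation of that external survey. Your argument supplies the missing self-contained derivation using only tools already present in the paper: the lower frame inequality is, verbatim, the (bb) property of $C_{\Psi}$; completeness is equivalent to injectivity of $C_{\Psi}$ via the identity $\kernel{C_{\Psi}}=\left(\overline{\textrm{span}}\{\psi_i\}_{i\in I}\right)^{\perp}$, where you correctly settle the one delicate point (a vector orthogonal to every $\psi_i$ has coefficient sequence $0\in\ell^2$, hence genuinely lies in $\domain{C_{\Psi}}$ and is a kernel element, not merely a vector outside the domain); and since $C_{\Psi}$ is closed for an arbitrary sequence (Proposition \ref{xxlstoeant11}), Proposition \ref{prop:injcloran1} converts (bb) into ``injective with closed range,'' giving $(1)\Leftrightarrow(2)$ with no gaps. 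What your approach buys is self-containedness and consistency with the paper's stated program of classifying sequences by properties of their frame-related operators --- it mirrors the operator-theoretic arguments used in Section 5 for the synthesis-operator characterizations --- whereas the paper's citation-only approach buys brevity at the cost of sending the reader to the earlier survey for exactly the kind of argument you reconstructed.
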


 In the following, we present some equivalent conditions for a lower frame sequence be a complete Riesz-Fischer sequence. 
 This gives a full generalization to the non-Bessel case of the well-known result relating frames to Riesz basis \cite[Theorem 6.1.1]{ole1n}:
 \begin{corollary}\label{eqRicompllow}
 Let $\Psi$ be  a lower frame sequence. If 
 $\overline{D_{\Psi}^r}$ has closed range, then the following are equivalent:
 \begin{enumerate}
     \item $\Psi$ is a complete Riesz-Fischer sequence.
     \item $\Psi$ is minimal.
     \item $\Psi$ is $\omega$-independent.
     \item $\Psi$ has a biorthogonal sequence.
     \item $\Psi$ has a unique biorthogonal sequence.
      \item $\Psi$ and $\wiSi$ are biorthogonal.
      \item $\Psi$ is an exact  sequence.
      \item $\Psi$ is a weak Schauder basis.
 \end{enumerate}
 \end{corollary}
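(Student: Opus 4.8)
The plan is to organise the eight conditions around a single analytic object, the restricted synthesis operator $\overline{D_{\Psi}^r}$, and to exploit the fact that for a lower frame sequence it is \emph{automatically} surjective onto $\Hil_{\Psi}$ by Corollary \ref{mathDlowriesz}(1); in particular its range is all of $\Hil_{\Psi}$, hence closed, so the standing hypothesis carries no extra information and the real content is the \emph{injectivity} of $\overline{D_{\Psi}^r}$, which by Corollary \ref{mathDlowriesz}(2) is exactly condition (1). Since every lower frame sequence is complete, the word ``complete'' in (1) is free, so (1) is simply ``Riesz-Fischer''. The strategy is then to cluster (1),(2),(4),(5),(6),(7) together by elementary arguments, to read off (8) from the weak reconstruction, and to isolate the single delicate implication involving $\omega$-independence.

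First I would assemble the \emph{minimality cluster}. The equivalence (2)$\Leftrightarrow$(7) is Corollary \ref{ex=min}; (2)$\Leftrightarrow$(4) and (4)$\Leftrightarrow$(5) follow from \cite[Lemma 3.3.1]{ole1n}, since a sequence is minimal iff it admits a biorthogonal sequence, which is unique precisely when the sequence is complete (again free here). For (1)$\Leftrightarrow$(2): Riesz-Fischer implies minimal by definition, and the converse is Corollary \ref{RC=LM}, that every minimal lower frame sequence is a complete Riesz-Fischer sequence. To fold in (6) I would invoke Corollary \ref{ex=bio}: an exact (equivalently minimal) lower frame sequence satisfies $\langle\psi_i,\wisi_j\rangle=\delta_{ij}$, giving (2)$\Rightarrow$(6); and (6)$\Rightarrow$(4) is immediate, as $\wiSi$ is then a biorthogonal sequence for $\Psi$.

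Next I would treat the two \emph{expansion} conditions through $\overline{D_{\Psi}^r}$. For (8): the weak reconstruction \eqref{weakgamma} shows that every $f$ admits a weak expansion $f=\sum_i\langle f,\wisi_i\rangle\psi_i$, so existence is automatic; uniqueness of such weak expansions is exactly the absence of a nontrivial weak relation, i.e.\ $\kernel{\overline{D_{\Psi}^r}}=\{0\}$, which by Corollary \ref{mathDlowriesz}(2) is condition (1). Hence (1)$\Leftrightarrow$(8). The implication (1)$\Rightarrow$(3) is then easy: a norm-convergent relation $\sum c_i\psi_i=0$ projects to $\overline{D_{\Psi}^r}c=0$, so $\kernel{D_{\Psi}}\subseteq\kernel{\overline{D_{\Psi}^r}}$, and injectivity of $\overline{D_{\Psi}^r}$ (Riesz-Fischer) forces injectivity of $D_{\Psi}$ ($\omega$-independence).

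The converse (3)$\Rightarrow$(2) is the step I expect to be the main obstacle. I would argue by contraposition: if $\Psi$ is not minimal, choose $j$ with $\psi_j\in\overline{\mathrm{span}}\{\psi_k\}_{k\neq j}$; Theorem \ref{ex=inco} then places us in the case $\langle\psi_j,\wisi_j\rangle\neq 1$ and its proof yields a \emph{weak} dependency $(1-a_j)\psi_j=\sum_{k\neq j}a_k\psi_k$ with $\ell^2$-coefficients $a_k=\langle\psi_j,\wisi_k\rangle$, equivalently a nonzero vector in $\kernel{\overline{D_{\Psi}^r}}$. The genuine difficulty is that $\omega$-independence concerns \emph{norm}-convergent relations, i.e.\ $\kernel{D_{\Psi}}$, whereas Theorem \ref{ex=inco} delivers only a weak one; for a non-Bessel sequence these need not coincide, so everything hinges on upgrading the weak relation to a norm-convergent one. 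This is precisely the place where the surjectivity and closed range of $\overline{D_{\Psi}^r}$ must be put to work, and it is the lower-frame analogue of the triviality that, for an honest frame, the synthesis operator is bounded with $\kernel{D_{\Psi}}=\kernel{\overline{D_{\Psi}^r}}$. I would close the gap by showing that for a lower frame sequence the offending coefficient sequence can be realised inside $\domain{D_{\Psi}}$, so that $D_{\Psi}$ is not injective, contradicting (3). Once (3)$\Rightarrow$(2) is in hand, the minimality cluster together with (1)$\Leftrightarrow$(8) closes all eight equivalences.
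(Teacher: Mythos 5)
Most of your proposal tracks the paper's own proof: the cluster (1),(2),(4),(5),(6),(7) is settled exactly as the paper settles it (Corollary \ref{RC=LM}, Corollary \ref{ex=min}, Corollary \ref{ex=bio}, and \cite[Lemma 3.3.1]{ole1n}); your observation that the closed-range hypothesis is automatic for a lower frame sequence (since $\overline{D_{\Psi}^r}$ is surjective by Corollary \ref{mathDlowriesz}) is correct; and your treatment of (8) via injectivity of $\overline{D_{\Psi}^r}$ is a legitimate alternative to the paper's route, which instead proves (6)$\Leftrightarrow$(8) directly by testing weak expansions against $\wiSi$.

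The genuine gap is (3)$\Rightarrow$(2). You correctly single it out as the delicate step, but you do not prove it: the sentence ``I would close the gap by showing that \dots the offending coefficient sequence can be realised inside $\domain{D_{\Psi}}$'' is a declaration of intent, not an argument. Your contraposition produces, for a non-minimal lower frame sequence, only a weak dependency with $\ell^2$ coefficients, i.e.\ a nonzero element of $\kernel{\overline{D_{\Psi}^r}}$; that refutes (1), which you already knew how to do, but it does not refute (3), since $\omega$-independence is a statement about $\kernel{D_{\Psi}}$, i.e.\ about norm-convergent relations, and for a non-Bessel sequence these two kernels can differ. Moreover, the tool you propose for the upgrade --- surjectivity and closed range of $\overline{D_{\Psi}^r}$ --- cannot perform it: surjectivity of the \emph{closure} only yields weak expansions (preimages under $\overline{D_{\Psi}^r}$), never norm convergence of $\sum_i c_i\psi_i$; and by your own correct remark that this hypothesis is vacuous, your plan amounts to proving ``$\omega$-independent $+$ lower frame $\Rightarrow$ minimal'' with no hypothesis at all, which nothing in the paper's toolkit supplies. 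The paper does not prove this implication internally either: it disposes of (3)$\Rightarrow$(2) by citing \cite[Theorem 3.2]{casoleli1} --- precisely the external result whose relevant direction, as the paper itself recalls before Corollary \ref{RC=LM}, was obtained there under a surjectivity assumption on $D_{\Psi}$. So, as written, your proposal establishes all the equivalences except those involving (3), where only (1),(2)$\Rightarrow$(3) is in place.
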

 \begin{proof}
 $(1)\Leftrightarrow (2)$ is corollary \ref{RC=LM}.
 
 $(2)\Rightarrow (3)$ Clear.
 
 $(3)\Rightarrow (2)$ is \cite[Theorem 3.2]{casoleli1}.
 
 $(2)\Leftrightarrow (4)$, $(2)\Leftrightarrow (5)$ and $(2)\Leftrightarrow (6)$ follow by \cite[Lemma 3.3.1]{ole1n}.
 
   $(6)\Leftrightarrow (1)$ is clear by \cite[Lemma 3.3.1]{ole1n} and Corollary \ref{RC=LM}. 
 
 $(7)\Rightarrow (2)$ This is Corollary \ref{ex=min}.
 
  $(2)\Rightarrow (7)$ Assume that $\Psi$ is a minimal sequence in $\Hil$. Then for all $j\in I$, $\{\psi_i\}_{i\neq j}$  is not complete in $\Hil$ and so it is not a lower frame sequence, anymore.
  
   $(6)\Rightarrow (8)$ Assume that $\Psi$ and $\wiSi$ are biorthogonal. For all $f\in \Hil$ we have 
   $$f=\sum_{i\in I}\left<f,\wisi_i\right>\psi_i$$
   in the weak sense. In order to show that $\Psi$ is a basis in the weak sense, it is enough to show that if $f=\sum_{j\in I}c_j\psi_j$ in the weak sense, then these coefficients are  unique. For that
   \begin{eqnarray*}
   \left< f,\wisi_i\right>=\left<\sum_{j\in I} c_j\psi_j,\wisi_i\right>=\sum_{i\in I}c_j\left< \psi_j,\wisi_i\right>=c_i.
   \end{eqnarray*}

   $(8)\Rightarrow (6)$ Let $\Psi$ is a basis in the weak sense. Since $\psi_j=\sum_{i\in I}\left<\psi_j,\wisi_i\right>\psi_i=\sum_{i\in I}\delta_{ij}\psi_i$  in the weak sense and coefficients are unique, so $\left<\psi_j,\wisi_i\right>=\delta_{ij}$  and so $\Psi$ and $\widetilde{\Psi}$ are biorthogonal.
 \end{proof}

\section*{Acknowledgements}
We want to thank many colleagues for a lot of discussion on this topic in too many years: Yuri Latushkin, Pete Casazza, Diana Stoeva, Michael Speckbacher, Rosario Corso and Lukas K\"ohldorfer.
We also thank Rosario Corso for a first reading of the paper and comments.

The work on this manuscript was supported by the Innovation project of the Austrian Academy of Sciences (\verb:IF_2019_24_Fun:, "Frames and Unbounded Operators"). 

\bibliographystyle{abbrv}
\bibliography{biblioall}

\def\xxl{\color{red}}
\def\mitra{\color{olive}}
\def\xxb{\color{blue}}
\definecolor{darkcyan}{rgb}{0.0, 0.55, 0.55}
\def\xxg{\color{darkcyan}}
\definecolor{darkviolet}{rgb}{0.58,0,0.83} 
\def\xxlnew{\color{darkviolet}}

\end{document}